\newtheorem{theorem}{Theorem}
\theoremstyle{definition}
\newtheorem{corollary}{Corollary}
\newtheorem{example}{Example}
\newtheorem{proposition}{Proposition}
\newtheorem{remark}{Remark}
\numberwithin{equation}{section}
\newcommand{\brak}[1]{\langle #1\rangle}
\newcommand{\Z}{\mathbb{Z}}
\begin{document}

\title{Singular Links and Yang-Baxter State Models }

\author{Carmen Caprau}
\address{Department of Mathematics, California State University, Fresno, CA 93740, USA}
\email{ccaprau@csufresno.edu}
\urladdr{}
\author{Tsutomu Okano}
\address{Department of Mathematical Sciences, Carnegie Mellon University, Pittsburgh, PA 15213, USA}
\email{tsutomuo@andrew.cmu.edu}
\author{Danny Orton}
\address{Department of Mathematics, California State University, Fullerton, CA 92834, USA}
\email{ortondanny@gmail.com}

\date{}
\subjclass[2010]{57M27; 57M15}
\keywords{graphs, invariants for knots and links, singular braids and links, $sl(n)$ polynomial, Yang-Baxter equation}

\begin{abstract}
We employ a solution of the Yang-Baxter equation to construct invariants for knot-like objects. Specifically, we consider a Yang-Baxter state model for the $sl(n)$ polynomial of classical links and extend it to oriented singular links and balanced oriented 4-valent knotted graphs with rigid vertices. We also define a representation of the singular braid monoid into a matrix algebra, and seek conditions for extending further the invariant to contain topological knotted graphs. In addition, we show that the resulting Yang-Baxter-type invariant for singular links yields a version of the Murakami-Ohtsuki-Yamada state model for the $sl(n)$ polynomial for classical links.
\end{abstract}
\maketitle

\section{Introduction}\label{sec:intro}
The Yang-Baxter equation (YBE) was first introduced in the field of statistical mechanics. It takes its name from independent work of C.N. Yang in 1968 and R.J. Baxter in 1971. It depends on the idea that in some scattering situations, particles may preserve their momentum in price of changing their quantum internal states. One form of the YBE states that a matrix  $R$, acting on two of the three objects, satisfies
\[ (R \otimes I) (I \otimes R) (R \otimes I) = (I \otimes R) (R \otimes I) (I \otimes R), \] 
in which case $R$ is called a solution of the YBE. This equation shows up when working with braid groups (in which case $R$ corresponds to swapping two braid strands) and when discussing invariants for knots and links. A relationship between the YBE and polynomial invariants of links was implicitly revealed by V. Jones in his seminal paper~\cite{J}, introducing a one-variable polynomial of links via a study of finite dimensional von Neumann algebras. The Jones polynomial was almost immediately generalized by J. Hoste, A. Ocneanu, K. Millett, W.B.R. Lickorish, P. Freyd, D. Yetter, J. Przytycki and P. Traczyk to a two-variable polynomial for oriented links (see~\cite{HOMFLY, PT}), the so-called HOMFLY-PT polynomial, which can be defined via a Conway-type skein relation. Using an analogous geometric procedure, L. Kauffman introduced a two-variable polynomial invariant of regular isotopy for unoriented knots and links (see~\cite{K1, K2}). 

Jones showed that the HOMFLY-PT polynomial can be constructed using explicit matrix representations of Hecke algebras, introduced in works on quantum scattering method and related to the YBE. Using Yang-Baxter operators and so-called EYB-operators (that is, enhanced Yang-Baxter operators), V. Turaev~\cite{Tu} associated with each EYB-operator an isotopy invariant of links, and showed that for some special EYB-operators, the corresponding invariants are equivalent to the HOMFLY-PT polynomial and the two-variable Kauffman polynomial.

In his excellent book~\cite{Ka}, L. Kauffman provides Yang-Baxter state models for certain polynomial invariants for links. These state models make use of solutions of the YBE.

In the recent years, there has been a great interest in the study of knot-like objects, including singular links, knotted graphs, and virtual knots. A \textit{knotted graph} is an embedding of a graph in three-dimensional space, and a \textit{singular link} is an immersion of a disjoint union of circles into three-dimensional space, which admits only finitely many singularities that are all transverse double points. The goal of this paper is to extend Kauffman's Yang-Baxter state model for the $sl(n)$ polynomial (which is a one-variable specialization of the HOMFLY-PT polynomial) to oriented singular links and 4-valent knotted graphs. Along the way, we define a representation of the singular braid monoid. Moreover, we arrive at certain skein relations for planar 4-valent graphs, relations which remind us of the Murakami-Ohtsuki-Yamada (MOY)~\cite{MOY} state model for the $sl(n)$-link invariant. These relations assign well-defined polynomials to planar 4-valent graphs by recursive formulas defined entirely in the category of planar graphs.

We remark that there is an EYB-operator (as in~\cite{Tu}) associated with the regular isotopy polynomial invariant for singular links constructed here. However, we focus in this paper on Kauffman's combinatorial approach to Yang-Baxter state models.

\textit{Organization of the paper.} In Section~\ref{sec:YB-sl(n)poly} we recall the Yang-Baxter state model for the regular isotopy version of the $sl(n)$ polynomial and introduce some notation. In Section~\ref{sec:inv-singlinks} we extend this state model to a regular isotopy invariant for singular links (based on a solution of the YBE) and discuss some of its properties. Then we use the resulting state model to construct in Section~\ref{sec:repres} a representation of the singular braid monoid into a matrix algebra over the ring $\Z[q, q^{-1}]$. Section~\ref{sec:MOYrelations} is devoted to showing that our polynomial invariant for singular links yields a version of the MOY state model for the $sl(n)$ polynomial. Finally, in Section~\ref{sec:knotted graphs} we extend further our polynomial invariant so that it contains balanced oriented 4-valent knotted graphs with rigid vertices. We also find a numerical invariant of 4-valent topological knotted graphs.

\section{A Yang-Baxter model for the $sl(n)$ polynomial}\label{sec:YB-sl(n)poly}

In this section we briefly review the Yang-Baxter state model for the $sl(n)$ polynomial introduced by Kauffman~\cite{Ka}. 
Given a link diagram $D$, label its edges with \textit{spins} from the equally spaced index set $I_n=\{ 1-n,3-n, \dots,n-3,n-1\}$, for $n \in \mathbb{Z}, n \geq 2$, as follows: replace each crossing in $D$ by either a \textit{decorated splice}
\[\hspace{0.2cm} \raisebox{-17pt}{  \includegraphics[height=.5in]{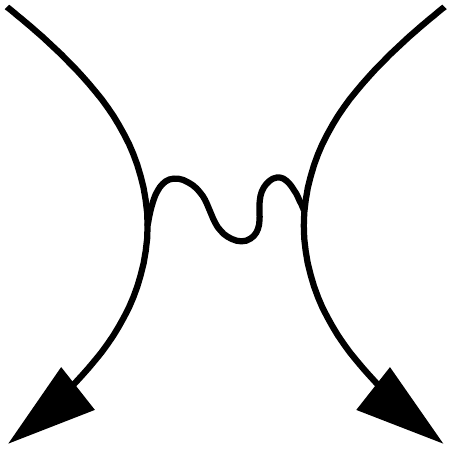}} \hspace{1cm} \raisebox{-17pt}{  \includegraphics[height=.5in]{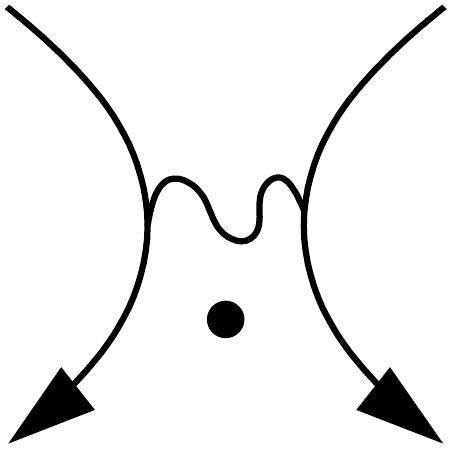}}
 \hspace{1cm} \raisebox{-17pt}{  \includegraphics[height=.5in]{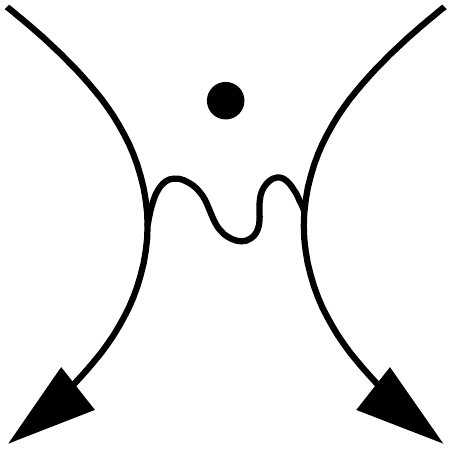}} \]
or by a \textit{flat crossing}
\[\raisebox{-17pt}{  \includegraphics[height=.5in]{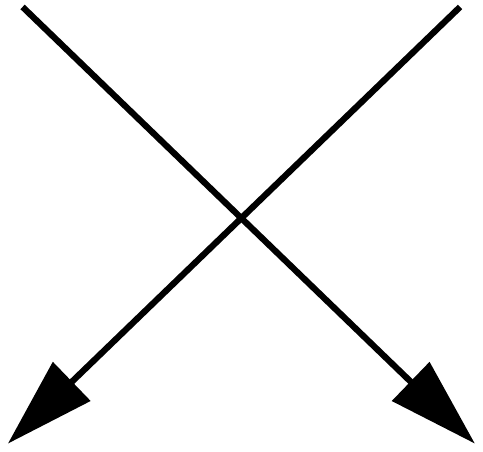} }\]
and label the resulting diagram $\sigma$ with spins from the set $I_n$, so that each loop in  $\sigma$ has constant spin, and so that the spins satisfy the following rules:
\vspace{-0.5cm}
\[
\begin{picture}(50,50)
    \raisebox{-17pt}{  \includegraphics[height=.5in]{eqsplit}}
      \put(-40, 20){\fontsize{10}{11}$ a$}
  \put(0, 20){\fontsize{10}{11}$ b$}
     \,\,\, $\Longrightarrow \,\,\, a = b$
   \end{picture} 
      \hspace{3cm}
\begin{picture}(50,50)
      \raisebox{-17pt}{  \includegraphics[height=.5in]{ltsplit}}
       \put(-40, 20){\fontsize{10}{11}$ a$}
  \put(0, 20){\fontsize{10}{11}$ b$}
       \,\,\,  $\Longrightarrow \,\,\, a < b$
  \end{picture} 
  \]
   \vspace{-0.8cm}
\[
\begin{picture}(50,50)
      \raisebox{-17pt}{  \includegraphics[height=.5in]{gtsplit}}
      \put(-40, 20){\fontsize{10}{11}$ a$}
  \put(0, 20){\fontsize{10}{11}$ b$}
      \,\,\, $\Longrightarrow \,\,\, a > b$
      \end{picture} 
      \hspace{3cm}
\begin{picture}(50,50)
      \raisebox{-17pt}{  \includegraphics[height=.5in]{flat}}
       \put(-40, 20){\fontsize{10}{11}$ a$}
  \put(0, 20){\fontsize{10}{11}$ b$}
      \,\,\,  $\Longrightarrow \,\,\, a \neq b$
      \end{picture} 
      \]
\vspace{0.2cm}

The result is a \textit{state} of $D$. Notice that some of the states will have incompatible labels (spins) and thus are discarded.

Associate to each state $\sigma$ a polynomial $\brak{ \sigma} \,\in \mathbb{Z}[q,q^{-1}]$ given by:

\begin{eqnarray}\label{eq:state-eval}
\brak{\sigma}\, = q^{||\sigma ||}, \,\,\,\,  ||\sigma||=\sum_{l} \text{rot}(l)\cdot \text{label}(l),
\end{eqnarray}
where the sum is taken over all components $l$ in $\sigma$, $\text{label}(l)$ is the spin assigned to the loop $l$, and where $\text{rot}(l)$ is the \textit{rotation number} of $l$ given by:
\[\text{rot}\left( \raisebox{-7pt}{ \includegraphics[height=.3in]{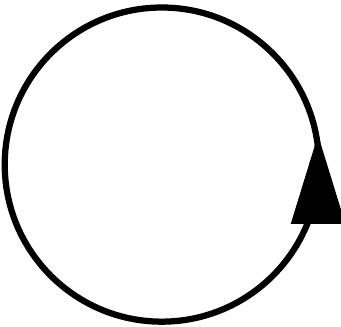}}  \right) =1, \,\,\,
\text{rot}\left(\raisebox{-7pt}{ \includegraphics[height=.3in]{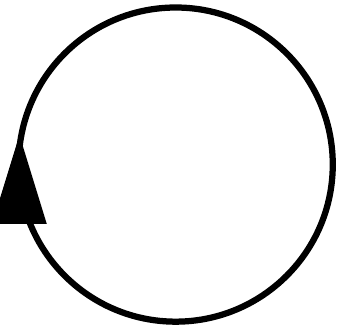}}  \right) =-1.\]

\begin{example} For the state $\sigma_1$ below, $\brak{\sigma_1} = q^{2a-b}$. On the other hand, the state $\sigma_2$ will have incompatible spins for any choice of labels, and thus it is discarded. Equivalently,  we set $\brak{\sigma_2} = 0$.
   \[ \sigma_1=  \raisebox{-33pt}{ \includegraphics[height=1in]{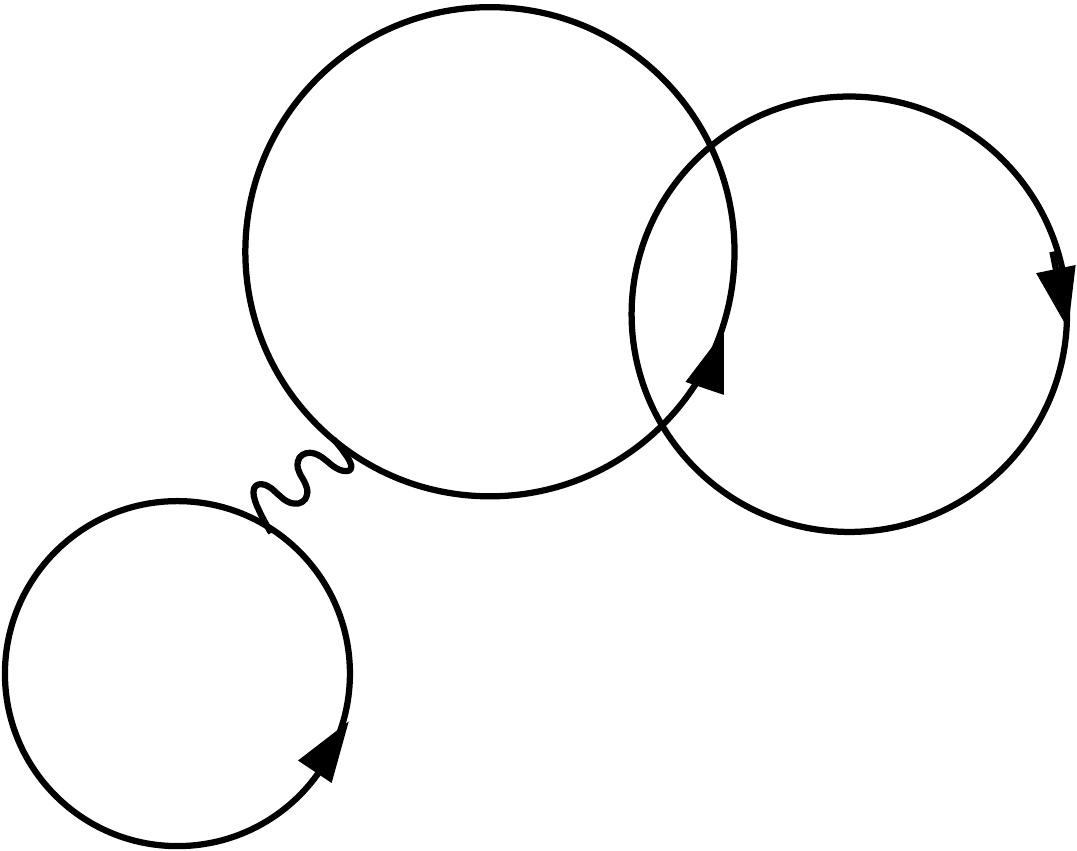}}
    \put(3, 10){\fontsize{10}{11}$ b$}
     \put(-33, 32){\fontsize{10}{11}$ \neq$}
       \put(-40, -7){\fontsize{10}{11}$ \neq$}
     \put(-67, 20){\fontsize{10}{11}$ a$}
       \put(-60, -25){\fontsize{10}{11}$ a$}\hspace{1.5cm}\raisebox{-17pt}{ \includegraphics[height=0.6in]{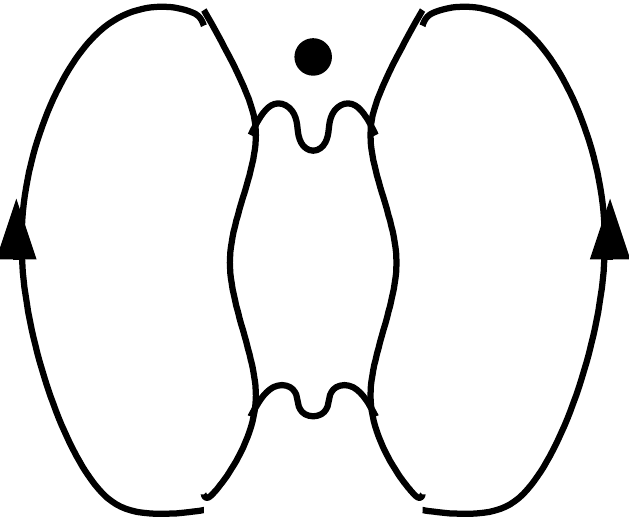}} = \sigma_2
 \]
\end{example}

The \textit{$sl(n)$ polynomial} of the link diagram $D$ is given by:

\begin{eqnarray}\label{eq:polyn-eval}
\left< D \right>= \sum_{\sigma} a_{\sigma}\, \brak{\sigma} = \sum_{\sigma} a_{\sigma}\, q^{||\sigma||}, 
\end{eqnarray}
where the sum is taken over all states $\sigma$ of $D$ and where $a_{\sigma} $ is the product of the weights associated with a state $\sigma$ according to the skein relations given in Figure~\ref{fig:crossings}.
\begin{figure}[ht]
\[
\left<\,\,
\begin{picture}(40,20)
      \raisebox{-13pt}{ \includegraphics[height=.4in]{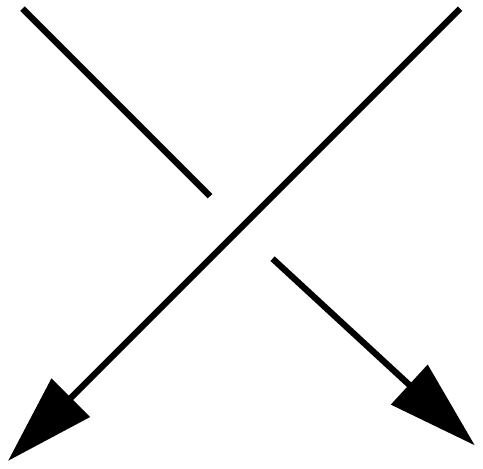}}
                 \put(-35, 17){\fontsize{10}{11}$ a$}
  \put(0, 17){\fontsize{10}{11}$ b$}
   \put(-35, -20){\fontsize{10}{11}$ c$}
     \put(0, -20){\fontsize{11}{11}$ d$}
                     \end{picture} 
             \right>
               = (q-q^{-1})\left<\,\,
\begin{picture}(40,20)
      \raisebox{-13pt}{ \includegraphics[height=.4in]{ltsplit}}
       \put(-35, 17){\fontsize{10}{11}$ a$}
  \put(0, 17){\fontsize{10}{11}$ b$}
    \put(-35, -20){\fontsize{10}{11}$ c$}
     \put(0, -20){\fontsize{10}{11}$ d$}
      \end{picture}
 \right>
      +q \left<\,\,
\begin{picture}(40,20)
      \raisebox{-13pt}{ \includegraphics[height=.4in]{eqsplit}}
     \put(-35, 17){\fontsize{10}{11}$ a$}
  \put(0, 17){\fontsize{10}{11}$ b$}
    \put(-35, -20){\fontsize{10}{11}$ c$}
     \put(0, -20){\fontsize{10}{11}$ d$}
      \end{picture}  
 \right>
      + \left<\,\,
\begin{picture}(40,20)
     \raisebox{-13pt}{  \includegraphics[height=.4in]{flat}}
 \put(-35, 17){\fontsize{10}{11}$ a$}
  \put(0, 17){\fontsize{10}{11}$ b$}
    \put(-35, -20){\fontsize{10}{11}$ c$}
     \put(0, -20){\fontsize{10}{11}$ d$}
              \put(-18, 7){\fontsize{10}{11}$ \neq$}
                    \end{picture} 
 \right> 
       \]
 \vspace{0.5cm}
 \[
\left<\,\,
\begin{picture}(40,20)
      \raisebox{-13pt}{ \includegraphics[height=.4in]{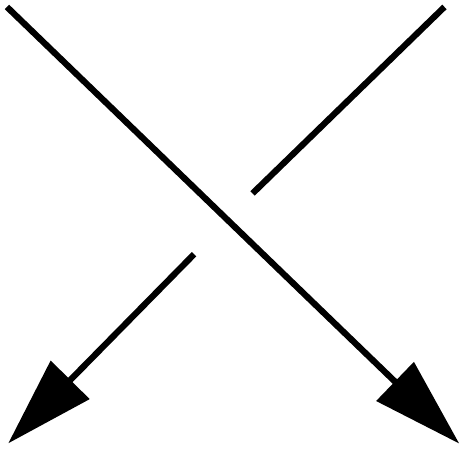}}
                 \put(-35, 17){\fontsize{10}{11}$ a$}
  \put(0, 17){\fontsize{10}{11}$ b$}
   \put(-35, -20){\fontsize{10}{11}$ c$}
     \put(0, -20){\fontsize{10}{11}$ d$}
                     \end{picture} 
         \right>     
                = (q^{-1}-q)\left<\,\,
\begin{picture}(40,20)
      \raisebox{-13pt}{ \includegraphics[height=.4in]{gtsplit}}
       \put(-35, 17){\fontsize{10}{11}$ a$}
  \put(0, 17){\fontsize{10}{11}$ b$}
    \put(-35, -20){\fontsize{10}{11}$ c$}
     \put(0, -20){\fontsize{10}{11}$ d$}
      \end{picture}
 \right>
      +q^{-1} \left<\,\,
\begin{picture}(40,20)
      \raisebox{-13pt}{ \includegraphics[height=.4in]{eqsplit}}
     \put(-35, 17){\fontsize{10}{11}$ a$}
  \put(0, 17){\fontsize{10}{11}$ b$}
    \put(-35, -20){\fontsize{10}{11}$ c$}
     \put(0, -20){\fontsize{10}{11}$ d$}
      \end{picture}  
 \right>
      + \left<\,\,
\begin{picture}(40,20)
     \raisebox{-13pt}{  \includegraphics[height=.4in]{flat}}
 \put(-35, 17){\fontsize{10}{11}$ a$}
  \put(0, 17){\fontsize{10}{11}$ b$}
    \put(-35, -20){\fontsize{10}{11}$ c$}
     \put(0, -20){\fontsize{10}{11}$ d$}
              \put(-18, 7){\fontsize{10}{11}$ \neq$}
                    \end{picture} 
     \right>.
\]
\caption{ Crossings decomposition}\label{fig:crossings}
\end{figure}

The diagrams in the two sides of the skein relations in Figure~\ref{fig:crossings} represent parts of bigger link diagrams that are the same, except in a small neighborhood where they differ as shown in the given relation. 

According to the rules in Figure~\ref{fig:crossings} and due to the requirement that each loop in a state $\sigma$ with $\brak{\sigma} \neq 0$ has constant spin, it follows that the evaluation of a crossing is non-zero only when the spins $a, b, c$ and $d$ associated with the four endpoints of the crossing satisfy the \textit{conservation law} $a+b =c+d$. In particular, the evaluation of a crossing is non-zero if and only if $a=c$ and $b = d$ or $a = d$ and $b = c$.

We can arrive at the $sl(n)$ polynomial $\brak{D}$ by interpreting link diagrams as \textit{abstract tensor diagrams}. An oriented link diagram $D$ can be decomposed with respect to a height function into minima (creations), maxima (annihilations) and crossings (interactions), as illustrated in Figure~\ref{fig:tensor}. That is, the diagram $D$ is constructed from interconnected maxima, minima and crossings, and we want to associate to them square matrices with entries in $\mathbb{Z}[q, q^{-1}]$. 

\begin{figure}[ht]
\includegraphics[height =1.4in, width = 1in]{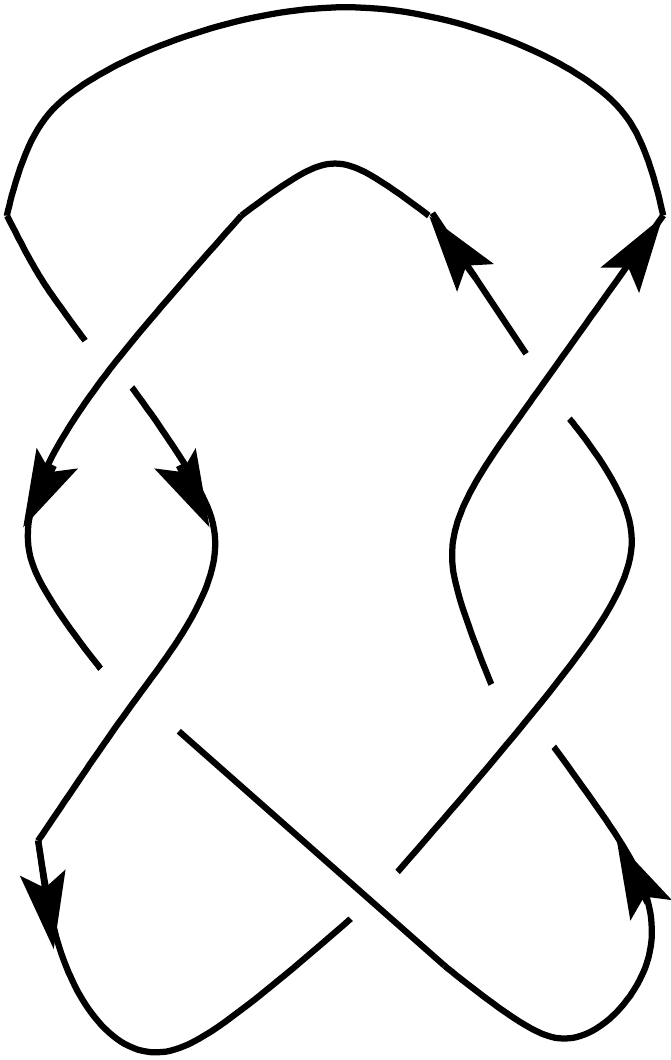}
  \put(-75,70){\fontsize{10}{11}$ a$}
  \put(-49, 70){\fontsize{10}{11}$ b$}
  \put(-30, 70){\fontsize{10}{11}$ c$}
  \put(-3, 70){\fontsize{10}{11}$ d$}
   \put(-77, 45){\fontsize{10}{11}$ e$}
  \put(-48, 45){\fontsize{10}{11}$ f$}
  \put(-30, 46){\fontsize{10}{11}$ g$}
  \put(-3, 45){\fontsize{10}{11}$ h$}
   \put(-76, 20){\fontsize{10}{11}$ i$}
  \put(-51, 19){\fontsize{10}{11}$ j$}
  \put(-32, 23){\fontsize{10}{11}$ k$}
  \put(-2, 21){\fontsize{10}{11}$ l$}
   \put(-46, 0){\fontsize{10}{11}$ m$}
  \put(-28, 0){\fontsize{10}{11}$ n$}
\caption{A diagram as an abstract tensor diagram}\label{fig:tensor}
\end{figure}

We associate the symbols $R^{ab}_{cd}$ and $\overline{R}^{ab}_{cd}$ to the positive and negative crossings, respectively:
\[
R^{ab}_{cd}=
\begin{picture}(50,30)
      \raisebox{-13pt}{ \includegraphics[height=.4in]{pcross}}
                \put(-37, 17){\fontsize{10}{11}$ a$}
  \put(0, 17){\fontsize{10}{11}$ b$}
    \put(-35, -20){\fontsize{10}{11}$ c$}
     \put(0, -20){\fontsize{10}{11}$ d$}
                \end{picture} 
                 \hspace{2cm}
\overline{R}^{ab}_{cd}=
\begin{picture}(50,30)
      \raisebox{-13pt}{ \includegraphics[height=.4in]{ncross}}
                   \put(-37, 17){\fontsize{10}{11}$ a$}
  \put(0, 17){\fontsize{10}{11}$ b$}
    \put(-35, -20){\fontsize{10}{11}$ c$}
     \put(0, -20){\fontsize{10}{11}$ d$}
                     \end{picture} \vspace{0.7cm}
\]
where $a, b, c, d \in I_n$.
With these conventions, the skein relations in Figure~\ref{fig:crossings} can be rewritten as follows:
\begin{eqnarray*}
R^{ab}_{cd}&=&(q-q^{-1}) [a<b]  \text{ } \delta^a_c \text{ }\delta^b_d  + q [a=b] \text{ }  \delta^a_c  \text{ } \delta^b_d  +[a\neq b]  \text{ }  \delta^a_d  \text{ } \delta^b_c\\
\overline{R}^{ab}_{cd}&=&(q^{-1}-q) [a>b]  \text{ } \delta^a_c  \text{ } \delta^b_d  + q^{-1} [a=b]  \text{ } \delta^a_c  \text{ } \delta^b_d  +[a\neq b]   \text{ } \delta^a_d  \text{ } \delta^b_c 
\end{eqnarray*}

where
\[[a=b] \delta^a_c \delta^b_d= \begin{picture}(60,30)
      \raisebox{-13pt}{  \includegraphics[height=.4in]{eqsplit}}
      \put(-37, 17){\fontsize{10}{11}$ a$}
  \put(0, 17){\fontsize{10}{11}$ b$}
    \put(-35, -20){\fontsize{10}{11}$ c$}
     \put(0, -20){\fontsize{10}{11}$ d$}
           \end{picture} 
      [a<b] \delta^a_c \delta^b_d= \begin{picture}(60,30)
      \raisebox{-13pt}{  \includegraphics[height=.4in]{ltsplit}}
       \put(-37, 17){\fontsize{10}{11}$ a$}
  \put(0, 17){\fontsize{10}{11}$ b$}
    \put(-37, -20){\fontsize{10}{11}$ c$}
     \put(0, -20){\fontsize{10}{11}$ d$}
      \end{picture}  \]
    
      \[[a>b] \delta^a_c \delta^b_d= \begin{picture}(60,30)
      \raisebox{-13pt}{  \includegraphics[height=.4in]{gtsplit}}
     \put(-37, 17){\fontsize{10}{11}$ a$}
  \put(0, 17){\fontsize{10}{11}$ b$}
    \put(-35, -20){\fontsize{10}{11}$ c$}
     \put(0, -20){\fontsize{10}{11}$ d$}
           \end{picture}
       [a\neq b]   \delta^a_d \delta^b_c= \begin{picture}(60,30)
      \raisebox{-13pt}{  \includegraphics[height=.4in]{flat}}
       \put(-37, 17){\fontsize{10}{11}$ a$}
  \put(0, 17){\fontsize{10}{11}$ b$}
    \put(-37, -17){\fontsize{10}{11}$ c$}
     \put(0, -20){\fontsize{10}{11}$ d$}
       \put(-18, 7){\fontsize{10}{11}$ \neq$}
      \end{picture} 
\]
and where $[P] = \begin{cases} 1\,\,\, \text{ if P is true}\\ 0 \,\,\,\,  \text{if P is false}
\end{cases}
$ and \, $  \hspace{0.1cm}
             \begin{picture}(15,40) {\raisebox{-13pt}{ \includegraphics[height=.4in]{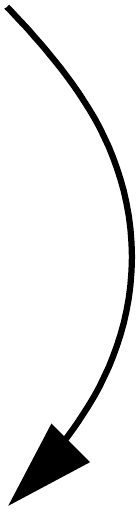}} } 
  \put(-15, 17){\fontsize{10}{11}$ a$}
     \put(-15, -17){\fontsize{10}{11}$ c$}
      \end{picture}
= \delta^a_c = \begin{cases} 1 \,\,\, \text{if} \,\,\, a=c \\0 \,\,\, \text{if} \,\,\, a \neq c. \end{cases}$
\vspace{0.4cm}

We associate the symbols $\overrightarrow{M}^{ab}, \overleftarrow{M}^{ab}$ and $\overrightarrow{M}_{ab}, \overleftarrow{M}_{ab}$ to oriented minima and maxima, respectively, and we put
\[\overrightarrow{M}^{ab} = \begin{picture}(40,15) \reflectbox{\raisebox{7pt}{\includegraphics[width=.15in, angle=270]{arc}}}
\put(-40, 10){\fontsize{10}{11}$ a$}
  \put(0, 10){\fontsize{10}{11}$ b$}
      \end{picture} \,\,= q^{a/2} \delta^{a,b} \hspace{1cm}
      \overleftarrow{M}^{ab} = \begin{picture}(40,15)\raisebox{7pt}{\includegraphics[width=.15in, angle=270]{arc}}
\put(-40, 10){\fontsize{10}{11}$ a$}
  \put(0, 10){\fontsize{10}{11}$ b$} 
      \end{picture}\,\, =  q^{-a/2} \delta^{a,b}\]  
          
 \[\overleftarrow{M}_{ab} = \begin{picture}(40,15) \reflectbox{\raisebox{1pt}{\includegraphics[width=.15in, angle=90]{arc}}}
\put(-40, -7){\fontsize{10}{11}$ a$}
  \put(0, -7){\fontsize{10}{11}$ b$}
      \end{picture}\,\, = q^{a/2} \delta^{a,b} \hspace{1cm}
      \overrightarrow{M}_{ab} = \begin{picture}(40,15) \raisebox{1pt}{\includegraphics[width=.15in, angle=90]{arc}}
\put(-40, -7){\fontsize{10}{11}$ a$}
  \put(0, -7){\fontsize{10}{11}$ b$}
      \end{picture} \,\,=  q^{-a/2} \delta^{a,b}
        \vspace{0.5cm}\]
where $ \delta^{a,b} =  \begin{cases} 1, \,\,\, a=b \\0, \,\,\, a \neq b. \end{cases}$

Therefore, for the diagram $D$ in Figure~\ref{fig:tensor}, the evaluation $\brak{D}$ is given by the following sum of products of matrix entries:
\[ \brak{D} = \sum_{a, b, \dots, n \in  I_n} \overleftarrow{M}_{ad} \overleftarrow{M}_{bc}R^{ab}_{ef} R^{ef}_{ij} \overrightarrow{M}^{im} R^{mj}_{nk} \overrightarrow{M}^{nl} R^{lk}_{hg} R^{hg}_{dc} \]
where the sum is over all possible choices of indices (spins from $I_n$) in the expression.

It is important to note that the above conventions yield the necessary loop value, namely $[n] = \frac{q^n -q^{-n}}{q-q^{-1}}$, where $[n]$ is the \textit{quantum integer} $n$. On one hand,
\[ \left< \raisebox{-9pt}{  \includegraphics[height=.3in]{pcirc}} \,\right> = \sum_{a \in I_n} \left< \raisebox{-9pt}{  \includegraphics[height=.3in]{pcirc}} \,\right>  \put(-39, 1){\fontsize{10}{11}$a$}  = \sum_{a\in I_n} q^a = [n],  \]
and, on the other hand,
\[\sum_{a \in I_n} \left< \raisebox{-9pt}{  \includegraphics[height=.3in]{pcirc}} \,\right>  \put(-39, 1){\fontsize{10}{11}$a$} = \sum_{a\in I_n} \left( \sum_{b \in I_n} \overleftarrow{M}_{ab}\overrightarrow{M}^{ab}\right) = \sum_{a\in I_n} q^a.    \]
Moreover, the loop value stays the same if the circle is clockwise oriented, since
 \[\sum_{a\in I_n} q^a = q^{1-n} + q^{3-n} + \dots + q^{n-3} + q^{n-1} = \sum_{a\in I_n} q^{-a}.\]
Observe that the creation and annihilation matrices satisfy: 
\[  \sum_{i \in  I_n} \overrightarrow{M}^{ai}\overrightarrow{M}_{ib} = \delta^a_b = \sum_{i \in  I_n} \overleftarrow{M}_{bi}\overleftarrow{M}^{ia} \]
\[ \sum_{i \in  I_n} \overleftarrow{M}^{ai}\overleftarrow{M}_{ib} = \delta^b_a = \sum_{i \in  I_n} \overrightarrow{M}_{bi}\overrightarrow{M}^{ia}\]
which correspond, respectively, to the following planar isotopies (canceling pairs of maxima and minima):
\[
\raisebox{-24pt}{\includegraphics[height=0.6in]{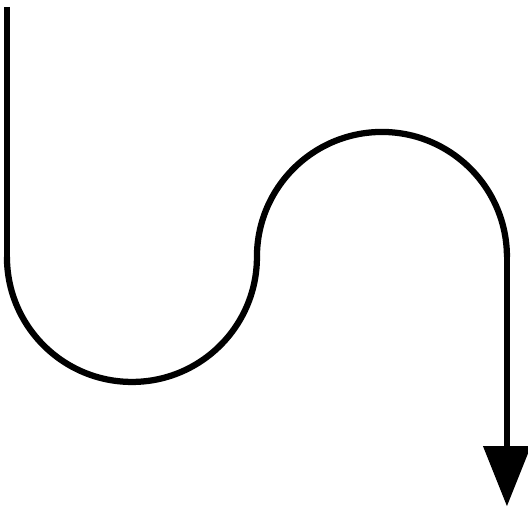}}\,\,\, = \,\,\, \raisebox{-24pt}{\includegraphics[height=0.6in]{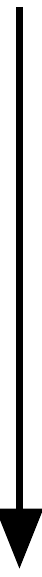}} \,\,\, =\,\,\,  \reflectbox{\raisebox{-24pt}{\includegraphics[height=0.6in]{s-move}}}
\put(-65,-23){\fontsize{10}{11}$b$}
\put(-93,-23){\fontsize{10}{11}$b$}
\put(-39,-23){\fontsize{10}{11}$b$}
\put(-67, 16){\fontsize{10}{11}$a$}
\put(-149, 16){\fontsize{10}{11}$a$}
\put(3, 16){\fontsize{10}{11}$a$}
\put(-125, -2){\fontsize{10}{11}$i$}
\put(-20, -2){\fontsize{10}{11}$i$} 
\hspace{1.7cm}
\raisebox{25pt}{\includegraphics[height=0.6in, angle = 180]{s-move}}\,\,\, = \,\,\, \raisebox{25pt}{\includegraphics[height=0.6in, angle=180]{arc-2}} \,\,\, =\,\,\,  \reflectbox{\raisebox{25pt}{\includegraphics[height=0.6in, angle=180]{s-move}}}
\put(-65,-19){\fontsize{10}{11}$b$}
\put(-93,-19){\fontsize{10}{11}$b$}
\put(-39,-19){\fontsize{10}{11}$b$}
\put(-66, 18){\fontsize{10}{11}$a$}
\put(-149, 18){\fontsize{10}{11}$a$}
\put(3, 19){\fontsize{10}{11}$a$}
\put(-125, 0){\fontsize{10}{11}$i$}
\put(-20, 0){\fontsize{10}{11}$i$}
\]
The matrices $R$ and $\overline{R}$ satisfy the \textit{channel unitarity}
\[
\sum_{i,i \in I_n}R^{ab}_{ij} \overline{R}^{ij}_{cd} =
 \begin{picture}(50,30) \raisebox{-13pt}{  \includegraphics[height=.4in]{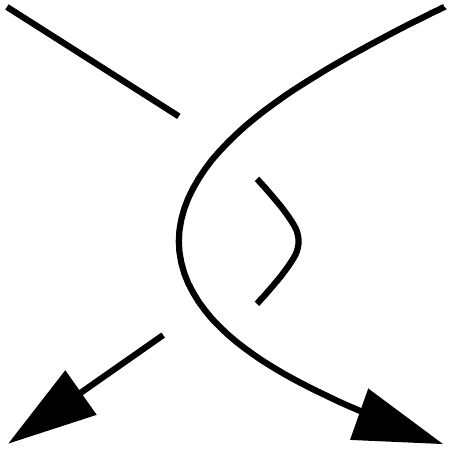} }
  \put(-37, 17){\fontsize{10}{11}$ a$}
  \put(0, 17){\fontsize{10}{11}$ b$}
    \put(-37, -20){\fontsize{10}{11}$ c$}
     \put(0, -20){\fontsize{10}{11}$ d$}
     \put(-30, 0){\fontsize{10}{11}$ i$}
  \put(-10, 0){\fontsize{10}{11}$ j$}
      \end{picture}
       \sim \hspace{0.3cm}
      \begin{picture}(60,30)
      \raisebox{-13pt}{  \includegraphics[height=.4in]{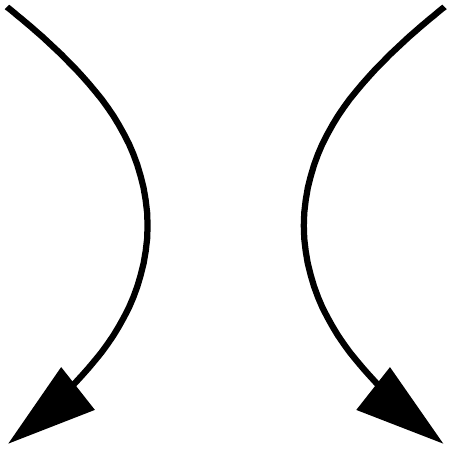}}
         \put(-35, 17){\fontsize{10}{11}$ a$}
  \put(0, 17){\fontsize{10}{11}$ b$}
    \put(-35, -20){\fontsize{10}{11}$ c$}
     \put(0, -20){\fontsize{10}{11}$ d$}
      \end{picture}
       =\delta^a_c \delta^b_d 
          \vspace{0.7cm}
\]
and \textit{cross-channel unitarity}
\[
\sum_{i, j \in I_n}\overline{R}^{ia}_{jb} R^{jd}_{ic} = 
\begin{picture}(50,30) \raisebox{-13pt}{ \includegraphics[height=.4in]{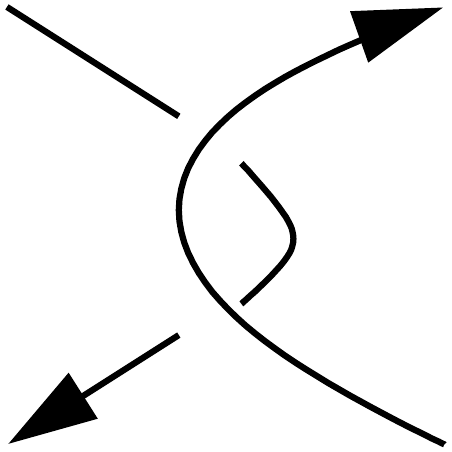} }
   \put(-37, 17){\fontsize{10}{11}$ a$}
  \put(0, 17){\fontsize{10}{10}$ b$}
    \put(-37, -20){\fontsize{10}{11}$ c$}
     \put(0, -20){\fontsize{10}{11}$ d$}
     \put(-30, 0){\fontsize{10}{11}$ i$}
  \put(-10, 0){\fontsize{10}{11}$ j$}
           \end{picture}
       \sim \hspace{0.3cm}
    \begin{picture}(60,30)  \raisebox{-13pt}{ \includegraphics[height=.4in]{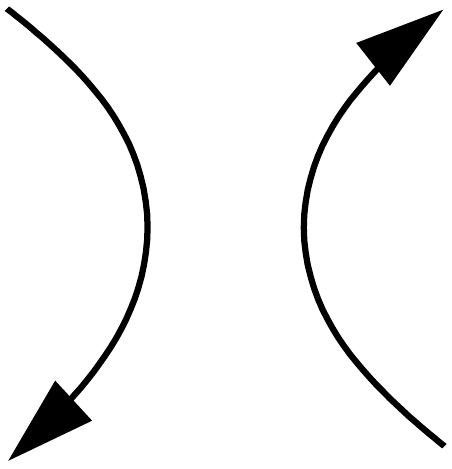}}  
     \put(-35, 17){\fontsize{12}{11}$ a$}
  \put(0, 17){\fontsize{10}{11}$ b$}
    \put(-35, -20){\fontsize{10}{11}$ c$}
     \put(0, -20){\fontsize{10}{11}$ d$}
           \end{picture}
       =\delta^a_c \delta^d_b. 
       \vspace{0.7cm}
\]
Moreover, we have that 
\[
\sum_{i,j,k\in I}R^{ab}_{ij} R^{jc}_{kf} R^{ik}_{de} =
\begin{picture}(60,40) \raisebox{-35pt}{ \includegraphics[height=1in]{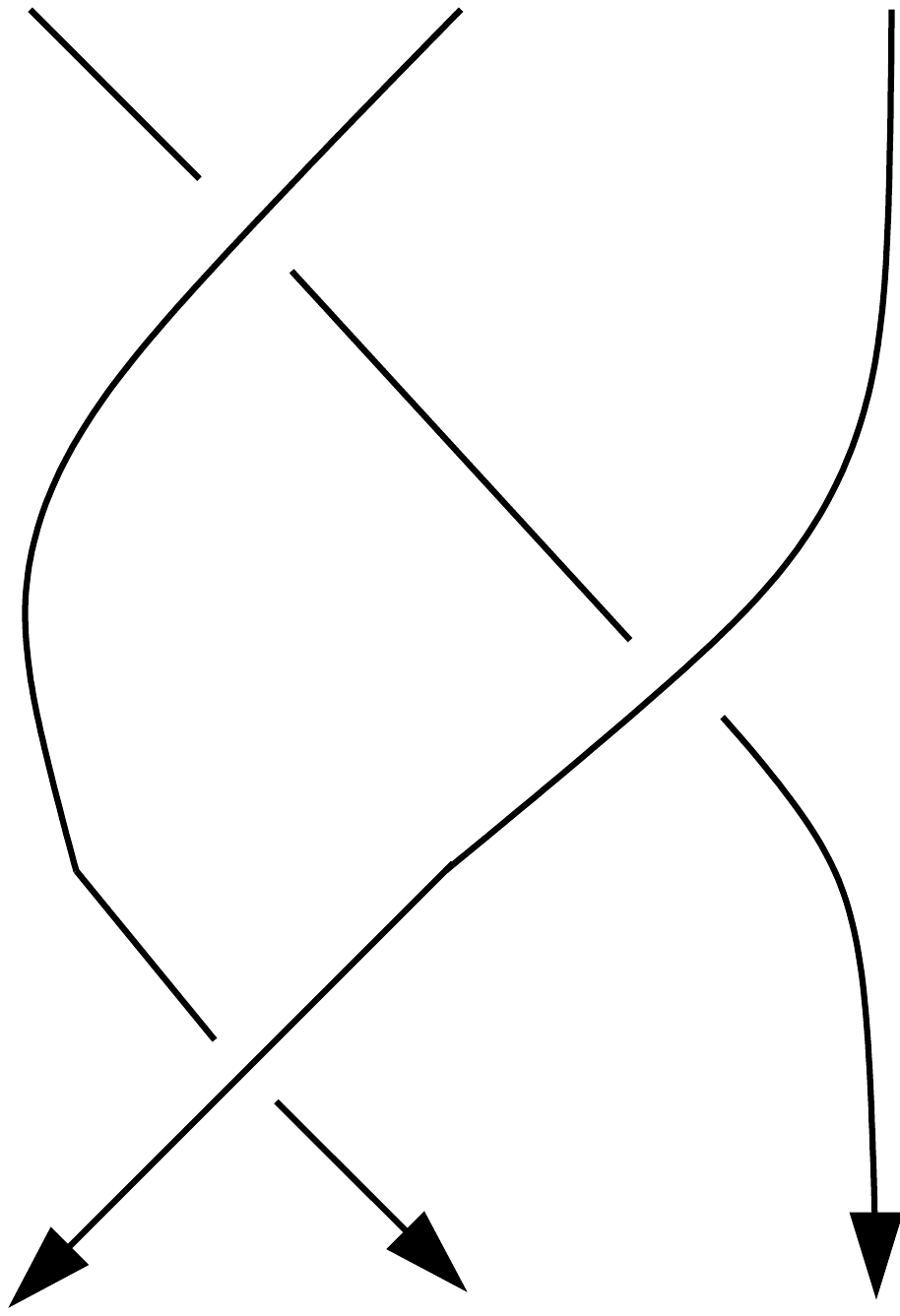} }
\put(-55, 38){\fontsize{10}{11}$ a$}
  \put(-32, 38){\fontsize{10}{11}$ b$}
  \put(-8, 38){\fontsize{10}{11}$ c$}
  \put(-50, 0){\fontsize{10}{11}$ i$}
   \put(-27, -18){\fontsize{10}{11}$ k$}
   \put(-28, 15){\fontsize{10}{11}$ j$}
    \put(-55, -44){\fontsize{10}{11}$ d$}
     \put(-8, -44){\fontsize{10}{11}$ f$}
       \put(-32, -44){\fontsize{10}{11}$ e$}
      \end{picture}
       \sim 
\begin{picture}(60,40)  \raisebox{-35pt}{ \includegraphics[height=1in]{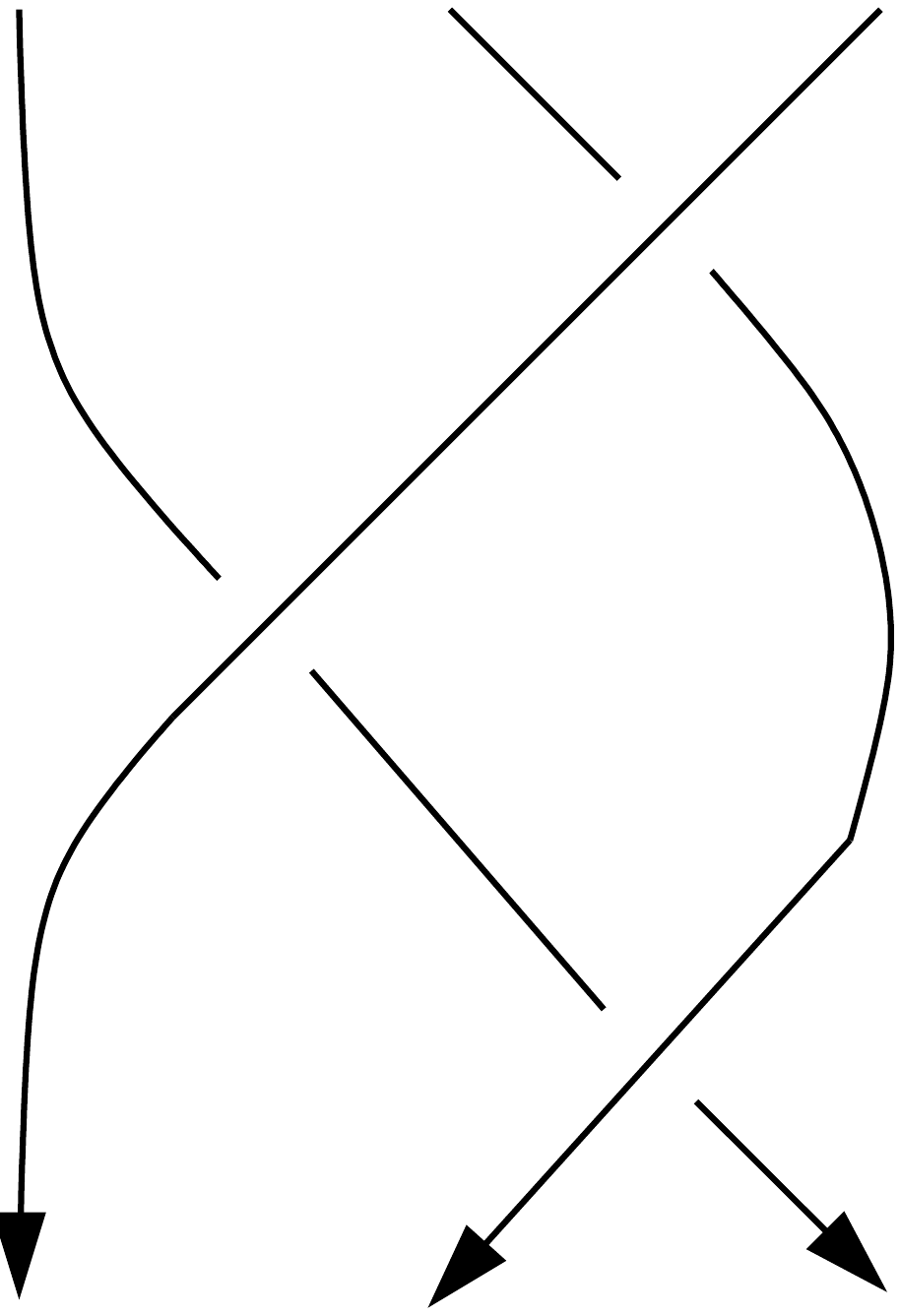} }
\put(-55, 38){\fontsize{10}{11}$ a$}
  \put(-32, 38){\fontsize{10}{11}$ b$}
  \put(-8, 38){\fontsize{10}{11}$ c$}
  \put(-12, 0){\fontsize{10}{11}$ j$}
   \put(-34, -18){\fontsize{10}{11}$ k$}
   \put(-31, 15){\fontsize{10}{11}$ i$}
    \put(-55, -44){\fontsize{10}{11}$ d$}
     \put(-8, -44){\fontsize{10}{11}$ f$}
       \put(-32, -44){\fontsize{10}{11}$ e$}
             \end{picture}
      =\sum_{i,j,k\in I}R^{bc}_{ij} R^{ai}_{dk} R^{kj}_{ef}.
      \vspace{1.4cm}
    \]
The latter relation is the \textit{Yang-Baxter equation} (YBE):
$$\sum_{i,j,k\in I}R^{ab}_{ij} R^{jc}_{kf} R^{ik}_{de} = \sum_{i,j,k \in I}R^{bc}_{ij} R^{ai}_{dk} R^{kj}_{ef}. $$
That is, the $R$ matrix as defined above is a solution of the YBE. Similarly, the matrix $\overline{R}$ is a solution of the YBE. Moreover, 

\begin{eqnarray*}
\left<\,\,
\begin{picture}(40,20)
      \raisebox{-13pt}{ \includegraphics[height=.4in]{pcross}}
                 \put(-35, 17){\fontsize{10}{11}$ a$}
  \put(0, 17){\fontsize{10}{11}$ b$}
   \put(-35, -20){\fontsize{10}{11}$ c$}
     \put(0, -20){\fontsize{11}{11}$ d$}
                     \end{picture} 
             \right> &-&
\left<\,\,
\begin{picture}(40,20)
      \raisebox{-13pt}{ \includegraphics[height=.4in]{ncross}}
                 \put(-35, 17){\fontsize{10}{11}$ a$}
  \put(0, 17){\fontsize{10}{11}$ b$}
   \put(-35, -20){\fontsize{10}{11}$ c$}
     \put(0, -20){\fontsize{11}{11}$ d$}
                     \end{picture} 
             \right> \\
             \\ \\
    &=& (q-q^{-1}) \left [ \left<\,\,
\begin{picture}(40,20)
      \raisebox{-13pt}{ \includegraphics[height=.4in]{ltsplit}}
       \put(-35, 17){\fontsize{10}{11}$ a$}
  \put(0, 17){\fontsize{10}{11}$ b$}
    \put(-35, -20){\fontsize{10}{11}$ c$}
     \put(0, -20){\fontsize{10}{11}$ d$}
      \end{picture}
 \right> + \left<\,\,
\begin{picture}(40,20)
      \raisebox{-13pt}{ \includegraphics[height=.4in]{gtsplit}}
       \put(-35, 17){\fontsize{10}{11}$ a$}
  \put(0, 17){\fontsize{10}{11}$ b$}
    \put(-35, -20){\fontsize{10}{11}$ c$}
     \put(0, -20){\fontsize{10}{11}$ d$}
      \end{picture}
 \right> +
 \left<\,\,
\begin{picture}(40,20)
      \raisebox{-13pt}{ \includegraphics[height=.4in]{eqsplit}}
       \put(-35, 17){\fontsize{10}{11}$ a$}
  \put(0, 17){\fontsize{10}{11}$ b$}
    \put(-35, -20){\fontsize{10}{11}$ c$}
     \put(0, -20){\fontsize{10}{11}$ d$}
      \end{picture}
 \right> \right ]\\ 
 \\ 
 &=& (q -q^{-1}) \left<\,\,
\begin{picture}(40,30)
      \raisebox{-13pt}{ \includegraphics[height=.4in]{2arcs}}
       \put(-35, 17){\fontsize{10}{11}$ a$}
  \put(0, 17){\fontsize{10}{11}$ b$}
    \put(-35, -20){\fontsize{10}{11}$ c$}
     \put(0, -20){\fontsize{10}{11}$ d$}
      \end{picture}
 \right>,\\
\end{eqnarray*}
\noindent 
where the last equality holds since the three states have non-zero evaluation if and only if $c=a$ and $d=b$, and since the spins $a$ and $b$ are either $a<b, a>b$ or $a=b$.

It follows that the polynomial $\brak{D}$ is an invariant of regular isotopy for oriented links. Moreover, the following hold:
\[ \left< \raisebox{-12pt}{ \includegraphics[height=.4in]{pcross}}\,\right> -  \left<\raisebox{-12pt}{ \includegraphics[height=.4in]{ncross}}\,\right>  = (q-q^{-1}) \left< \raisebox{-12pt}{  \includegraphics[height=.4in]{2arcs} }\, \right >\]

\[\left< \raisebox{-12pt}{  \includegraphics[height=.4in]{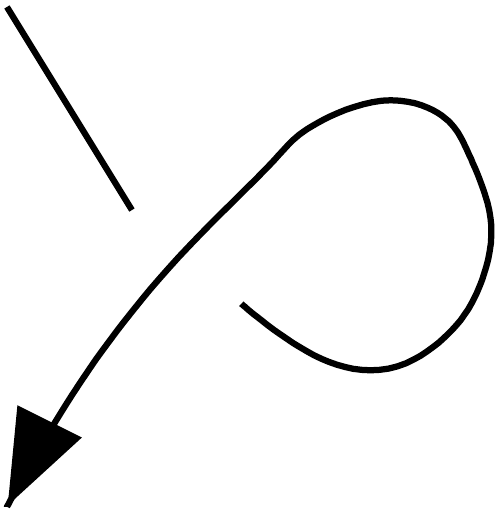}} \,\right> = q^{n}  \left<   \raisebox{-12pt}{  \includegraphics[height=.4in]{arc}}\,\right>,\,\,\,  \left< \raisebox{-12pt}{  \includegraphics[height=.4in]{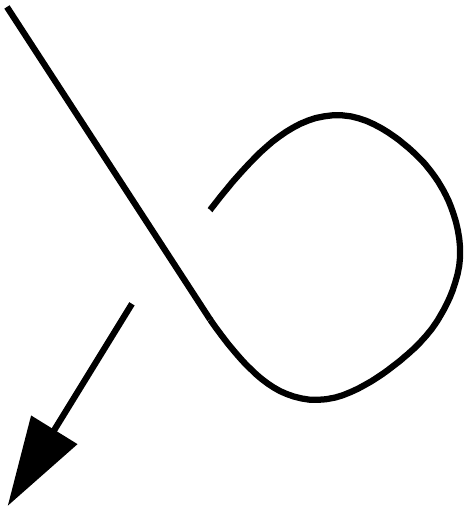}}\, \right> = q^{-n}  \left<   \raisebox{-12pt}{  \includegraphics[height=.4in]{arc}}\,\right>\]

\vspace{0.2cm}
\[\left< \raisebox{-9pt}{  \includegraphics[height=.3in]{pcirc}} \,\right>=  \left<\raisebox{-9pt}{  \includegraphics[height=.3in]{ncirc}} \,\right> = \displaystyle \frac{q^{n}-q^{-n}}{q-q^{-1}} = [n],\]
which implies that $\left<D\right>$ is the regular isotopy version of the $sl(n)$-link invariant.

\section{An invariant for singular links}\label{sec:inv-singlinks}

A \textit{singular link} is an immersion of a disjoint union of circles  in $\mathbb{R}^3$ which admits only finitely many singularities that are all transverse double points. 
A \textit{knotted graph} (also called a \textit{spacial graph}) is an embedding of a graph in $\mathbb{R}^3$. A singular link can be regarded as a 4-valent \textit{rigid-vertex embedding} of a graph in $\mathbb{R}^3$. In this paper we consider only 4-valent knotted graphs, that is graphs whose vertices have degree 4.

Two singular links are called \textit{equivalent} if their diagrams differ by a finite sequence of the classical Reidemeister moves together with the extended Reidemeister moves $R4$ and $R5$ shown in Figure~\ref{fig:extended-Reid-rigid}.

\begin{figure}[ht]
\[  \raisebox{-12pt}{\includegraphics[height=0.4in]{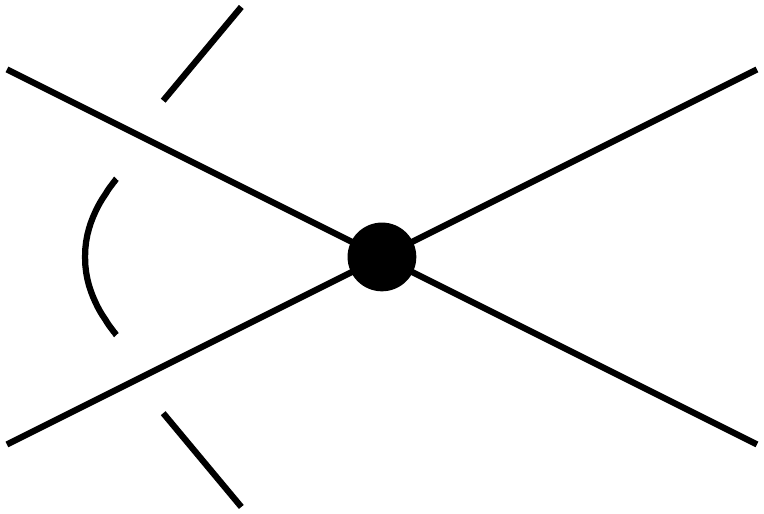}}\,\, \stackrel{R4}{\longleftrightarrow} \,\, \raisebox{-12pt}{\includegraphics[height=0.4in]{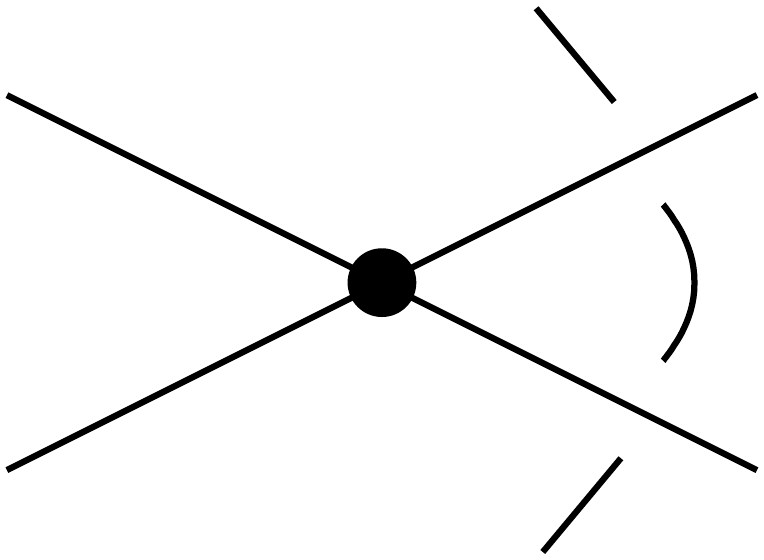}} \,\, \qquad \,\,  \raisebox{-12pt}{\includegraphics[height=0.4in]{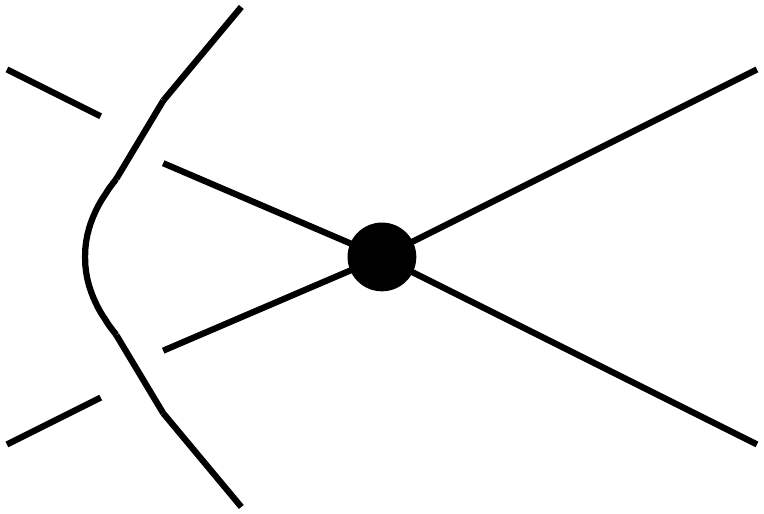}} \,\, \stackrel{R4}{\longleftrightarrow}\,\,  \raisebox{-12pt}{\includegraphics[height=0.4in]{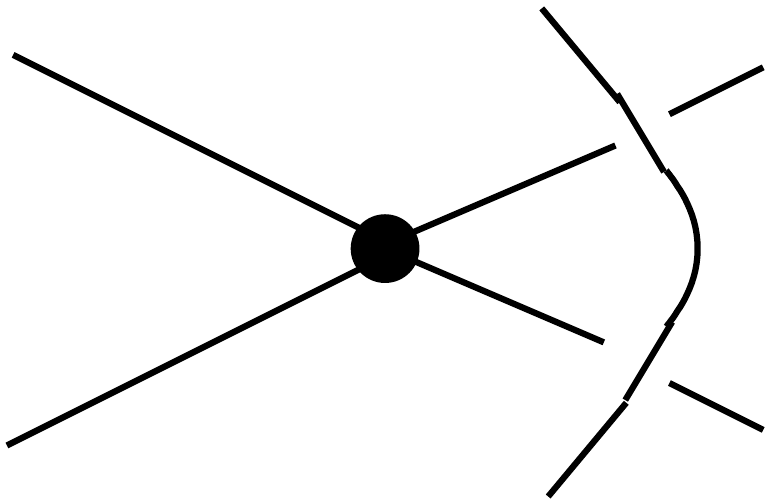}} \]\\
\[ \raisebox{-10pt}{\includegraphics[height=0.35in]{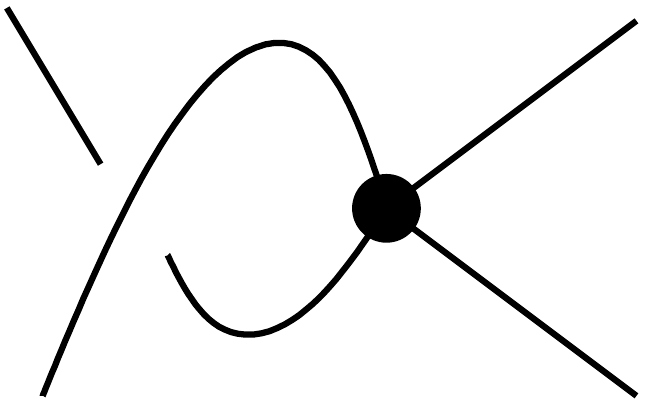}}\,\,\stackrel{R5}{ \longleftrightarrow} \reflectbox{\raisebox{-10pt}{\includegraphics[height=0.35in]{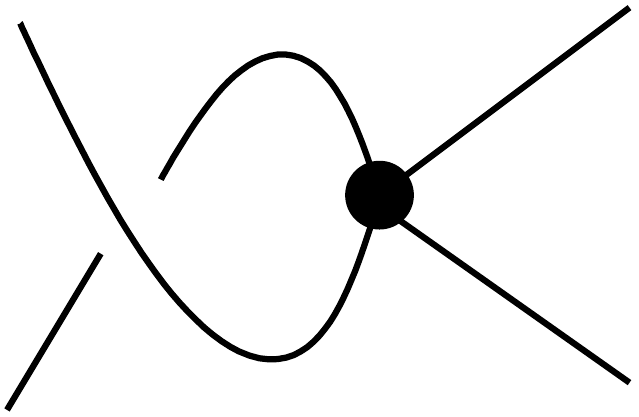}}}\]
\text{(rigid vertices)}
\caption{The moves $R4$ and $R5$}\label{fig:extended-Reid-rigid}
\end{figure}

Notice that the move $R5$ preserves the ordering of the edges meeting at a singular crossing. In graph-theoretical language this means that we regard a singular crossing as a rigid disk. Each disk has four arcs attached to it, and the cyclic order of these arcs is determined via the rigidity of the disk. A rigid vertex isotopy of the embeddings of such a graph  $G$ in three-space consists of affine motions of the disks, together with topological ambient isotopies of the edges of $G$. As mentioned above, the collection of moves that generate rigid vertex isotopy for diagrams of 4-valent graph embeddings are the classical Reidemeister moves coupled with the moves $R4$ and $R5$ depicted above (see~\cite{Ka0}). 

On the other hand, two 4-valent knotted graphs are equivalent if their diagrams differ by a finite sequence of the classical Reidemeister moves together with the extended Reidemeister moves $R4$ and $R6$. The Reidemeister move of type 6 is depicted in Figure~\ref{fig:non-rigid}. For more details on equivalent knotted graphs we refer the reader to Kauffman's work~\cite{Ka0}.
\begin{figure}[ht] 
\[ \raisebox{-10pt}{\includegraphics[height=0.35in]{N5-left1}}\,\,\stackrel{R6}{ \longleftrightarrow} \,\, \raisebox{-10pt}{\includegraphics[height=0.35in]{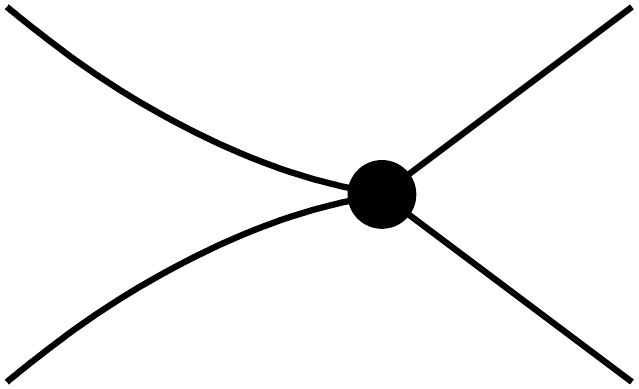}} \,\, \stackrel{R6}{\longleftrightarrow} \,\,  \raisebox{-10pt}{\includegraphics[height=0.35in]{N5-left2}}\]
 \text{(non-rigid vertices)}
\caption{The move $R6$}\label{fig:non-rigid}
\end{figure}

In this paper all singular links and knotted graphs are oriented. Our first goal is to extend the Yang-Baxter state model for the $sl(n)$ link polynomial described in Section~\ref{sec:YB-sl(n)poly} to oriented singular links. 

Given a singular link diagram $G$, we label its edges with spins from the equally spaced index set $I_n=\{ 1-n,3-n, \dots,n-3,n-1\}$, for $n \in \mathbb{Z}, n \geq 2$ and we decompose the classical crossings according to the skein relations in Figure~\ref{fig:crossings}. We need to define a skein relation involving a singular crossing of $G$. For example, we can impose the following skein relation, for some $\alpha, \beta \in \mathbb{Z}[q, q^{-1}]$:
\vspace{0.1cm}
\begin{eqnarray}\label{eq:general-sing-crossing}
\left<\,\,
\begin{picture}(40,20)
      \raisebox{-13pt}{ \includegraphics[height=.4in]{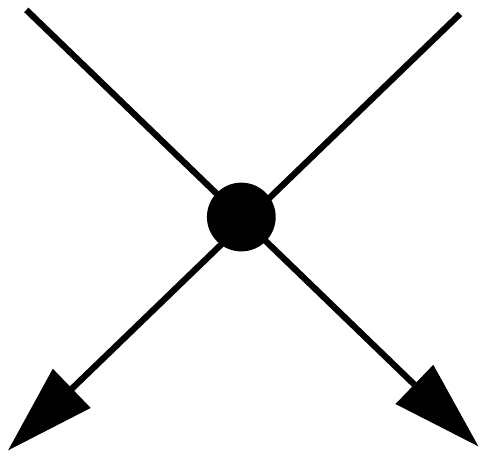}}
                \put(-37, 17){\fontsize{10}{11}$ a$}
  \put(0, 17){\fontsize{10}{11}$ b$}
    \put(-37, -20){\fontsize{10}{11}$ c$}
     \put(0, -20){\fontsize{10}{11}$ d$}
                                          \end{picture} 
               \right>
               = \alpha \left<\,\,
\begin{picture}(40,20)
      \raisebox{-13pt}{ \includegraphics[height=.4in]{pcross}}
 \put(-37, 17){\fontsize{10}{11}$ a$}
  \put(0, 17){\fontsize{10}{11}$ b$}
    \put(-37, -20){\fontsize{10}{11}$ c$}
     \put(0, -20){\fontsize{10}{11}$ d$}
      \end{picture}
     \right>
      +\beta \left<\,\,
\begin{picture}(40,20)
      \raisebox{-13pt}{ \includegraphics[height=.4in]{ncross}}
       \put(-37, 17){\fontsize{10}{11}$ a$}
  \put(0, 17){\fontsize{10}{11}$ b$}
    \put(-37, -20){\fontsize{10}{11}$ c$}
     \put(0, -20){\fontsize{10}{11}$ d$}
                    \end{picture}  \right>           
 \end{eqnarray}
\vspace{0.5cm}

Then we evaluate the resulting states of $G$ using the formula \eqref{eq:state-eval}.              
Putting all together, we obtain a Laurent polynomial $\brak{G}$ associated with a singular link diagram $G$, given by
\begin{eqnarray}\label{eq:sing-polyn-eval}
\left< G \right>= \sum_{\sigma} b_{\sigma}\, \brak{\sigma} = \sum_{\sigma} b_{\sigma}\, q^{||\sigma||}, 
\end{eqnarray}
where the sum is taken over all states $\sigma$ of $G$ and where $b_{\sigma}$ is the product of the weights associated with a state $\sigma$ according to the skein relations given in Equation~\eqref{eq:general-sing-crossing} and Figure~\ref{fig:crossings}.

We remind the reader that given an invariant of regular isotopy for classical links, it can be extended via the relation~\eqref{eq:general-sing-crossing} to a regular isotopy invariant of singular links. Translating this into our case, we arrive at the following result:

\begin{theorem}
The Laurent polynomial $\brak{G}(q) \in \Z[q, q^{-1}]$ is an invariant of regular isotopy for oriented singular links $G$, for any $\alpha, \beta \in \Z[q, q^{-1}]$, and satisfies
\[ \left< \raisebox{-12pt}{ \includegraphics[height=.4in]{pcross}}\,\right> -  \left<\raisebox{-12pt}{ \includegraphics[height=.4in]{ncross}}\,\right>  = (q-q^{-1}) \left< \raisebox{-12pt}{  \includegraphics[height=.4in]{2arcs} }\, \right >\]
\[\left< \raisebox{-12pt}{  \includegraphics[height=.4in]{poskink}} \,\right> = q^{n}  \left<   \raisebox{-12pt}{  \includegraphics[height=.4in]{arc}}\,\right>,\,\,\,  \left< \raisebox{-12pt}{  \includegraphics[height=.4in]{negkink}}\, \right> = q^{-n}  \left<   \raisebox{-12pt}{  \includegraphics[height=.4in]{arc}}\,\right>\]
\[\left< \raisebox{-9pt}{  \includegraphics[height=.3in]{pcirc}} \,\right>=  \left<\raisebox{-9pt}{  \includegraphics[height=.3in]{ncirc}} \,\right>  = [n].\]
\end{theorem}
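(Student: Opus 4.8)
The plan is to reduce everything to the classical case settled in Section~\ref{sec:YB-sl(n)poly}. There we saw that $\brak{D}$ is a regular isotopy invariant of oriented classical links satisfying the crossing-difference relation, the two kink relations, and the loop value $[n]$. Since the defining formula \eqref{eq:sing-polyn-eval} resolves every singular crossing through the same skein relation \eqref{eq:general-sing-crossing}, linearity expresses the bracket of a singular diagram as a fixed $\Z[q,q^{-1}]$-combination
\[ \brak{G} = \sum_{D} \gamma_D\, \brak{D}, \]
where $D$ runs over the classical diagrams obtained by replacing each singular crossing by a positive or a negative crossing and $\gamma_D$ is the associated product of factors $\alpha$ and $\beta$. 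The three displayed identities in the statement take place in a region containing only classical crossings, so they are inherited verbatim from the classical relations of Section~\ref{sec:YB-sl(n)poly}: no singular crossing participates, and nothing further need be checked for them.

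Thus the content of the theorem is invariance of $\brak{G}$ under the extended moves $R4$ and $R5$ of Figure~\ref{fig:extended-Reid-rigid}; the classical moves $R2$ and $R3$ are already accounted for by the regular isotopy invariance of each $\brak{D}$. I would first record the tensor form of the singular crossing,
\[ S^{ab}_{cd} = \alpha\, R^{ab}_{cd} + \beta\, \overline{R}^{ab}_{cd}, \]
obtained by reading \eqref{eq:general-sing-crossing} through the abstract-tensor dictionary. Applying \eqref{eq:general-sing-crossing} to the singular crossing on each side of $R4$ and $R5$ turns both sides into the same $\Z[q,q^{-1}]$-combination of classical configurations, and the two resulting families of classical diagrams are then matched in pairs by ordinary Reidemeister moves. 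Concretely, $R4$ is a strand pushed across the rigid vertex, so after resolving $S$ each summand differs from its partner by a classical $R3$ (Yang--Baxter) move; in tensor language this is exactly the Yang--Baxter equation of Section~\ref{sec:YB-sl(n)poly} applied separately to the $\alpha$-term and the $\beta$-term. Since $\brak{D}$ is invariant under every $R3$ move, $R4$-invariance follows for all $\alpha,\beta$.

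For $R5$ I would proceed identically: resolving $S$ reduces each side to a combination of classical diagrams related by $R2$-type moves and planar isotopy (the canceling maxima--minima pairs and the channel/cross-channel unitarity relations of Section~\ref{sec:YB-sl(n)poly}), each of which preserves $\brak{\cdot}$. The point that makes this work for arbitrary $\alpha,\beta$ is the rigidity of the vertex: because $R5$ preserves the cyclic order of the four incident edges, the matrix $S$ need only inherit the conservation law $a+b=c+d$ and the symmetry already enjoyed by $R$ and $\overline{R}$, and no relation among $\alpha,\beta$ is forced.

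The step I expect to be the main obstacle is the bookkeeping for $R5$: one must verify that the rigid, cyclic-order-preserving vertex really does translate, after resolution, into classical moves that genuinely fix the bracket, with the coefficients $\alpha$ and $\beta$ appearing identically on the two sides. It is worth emphasizing where the hypothesis of \emph{rigid} vertices is used: the non-rigid move $R6$ of Figure~\ref{fig:non-rigid}, which pushes a strand through the vertex and changes the cyclic order, would not reduce to classical Reidemeister moves and would instead impose genuine constraints on $\alpha$ and $\beta$. That this never happens for $R4$ and $R5$ is precisely why the theorem holds for every $\alpha,\beta \in \Z[q,q^{-1}]$.
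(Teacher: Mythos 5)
Your proposal is correct and takes essentially the same route as the paper's own proof: resolve each singular crossing via the defining relation~\eqref{eq:general-sing-crossing}, inherit $R2$, $R3$ and the three displayed identities directly from the classical $sl(n)$ model of Section~\ref{sec:YB-sl(n)poly}, then obtain $R4$ from Reidemeister-3-type moves applied to the $\alpha$- and $\beta$-resolutions and $R5$ from the fact that the $\alpha$-terms coincide while the $\beta$-terms match by $R2$. The only cosmetic difference is that you organize the resolution globally as $\brak{G}=\sum_D \gamma_D \brak{D}$, whereas the paper argues locally, move by move.
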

\begin{proof}
Since $\brak{G}$ is an extension of the Yang-Baxter state model for the $sl(n)$ link invariant, it follows at once that $\brak{G}$ is invariant under the type 2 and type 3 Reidemeister moves, and that it satisfies the above three relations.  It is easy to see that $\brak{G}$ is invariant under the extended $R4$ move; this follows from Equation~\eqref{eq:general-sing-crossing} and the fact that $\brak {G}$ is invariant under the Reidemeister move of type 3. We show below that $\brak{G}$ is invariant under the move $R5$:

\begin{eqnarray*}
 \left<\,
       \raisebox{-15pt}{\reflectbox{ \includegraphics[height=0.45in]{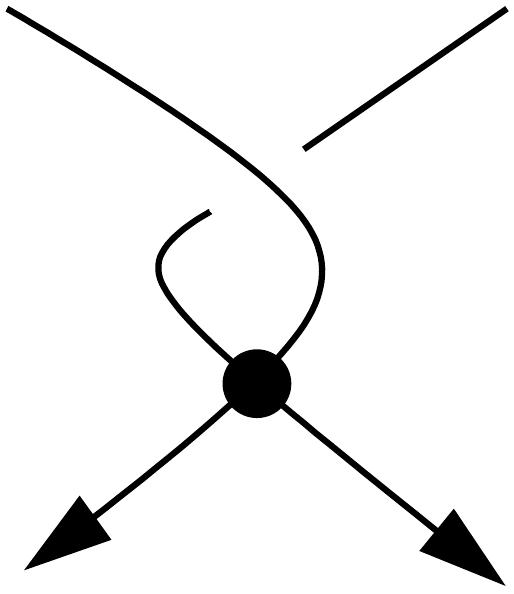}}}
 \right>
 &=& \alpha
 \left<
      \raisebox{-13pt}{ \includegraphics[height=.4in]{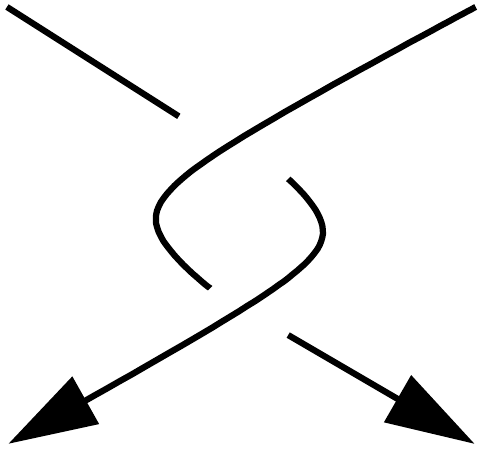}}
 \right>
 +  \beta
  \left<
      \raisebox{-13pt}{ \includegraphics[height=.4in]{reid2a}}
 \right> 
 \\
 &\stackrel{R2}{=}& \alpha
 \left<
      \raisebox{-13pt}{ \includegraphics[height=.4in]{twist}}
 \right>
 +  \beta
  \left<
      \reflectbox{\raisebox{-13pt}{ \includegraphics[height=.4in]{reid2a}}}
 \right>
\\
 &=& \left<
      \raisebox{-15pt}{ \includegraphics[height=.45in]{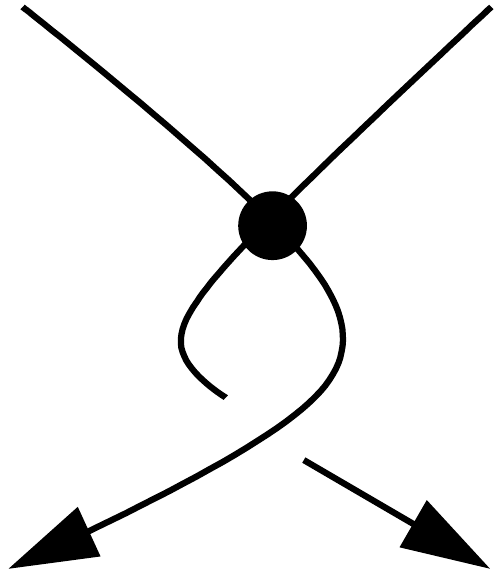}}\,
 \right>
\end{eqnarray*}
which completes the proof. 
\end{proof}

For the remaining of the paper, we work with $\alpha = \displaystyle \frac{q}{q-q^{-1}}$ and $\beta = \displaystyle \frac{-q^{-1}}{q-q^{-1}}$ in Equation~\eqref{eq:general-sing-crossing}. This results in the singular crossing decomposition displayed in Figure~\ref{fig:sing-crossing}.
\begin{figure}[ht] 
\[
\left<\,\,
\begin{picture}(40,20)
      \raisebox{-13pt}{ \includegraphics[height=.4in]{flatcross}}
                \put(-37, 17){\fontsize{10}{11}$ a$}
  \put(0, 17){\fontsize{10}{12}$ b$}
    \put(-37, -20){\fontsize{10}{12}$ c$}
     \put(0, -20){\fontsize{10}{12}$ d$}
                \end{picture} 
              \right>
               = q\left<\,\,
\begin{picture}(40,20)
      \raisebox{-13pt}{ \includegraphics[height=.4in]{ltsplit}}
     \put(-37, 17){\fontsize{10}{11}$ a$}
  \put(0, 17){\fontsize{10}{11}$ b$}
    \put(-37, -20){\fontsize{10}{11}$ c$}
     \put(0, -20){\fontsize{10}{11}$ d$}
      \end{picture}
 \right>
       +q^{-1} \left<\,\,
\begin{picture}(40,20)
      \raisebox{-13pt}{ \includegraphics[height=.4in]{gtsplit}}
     \put(-37, 17){\fontsize{10}{11}$ a$}
  \put(0, 17){\fontsize{10}{11}$ b$}
    \put(-37, -20){\fontsize{10}{11}$ c$}
     \put(0, -20){\fontsize{10}{11}$ d$}
      \end{picture}  
       \right>
      +(q+q^{-1}) \left<\,\,
\begin{picture}(40,20)
      \raisebox{-13pt}{ \includegraphics[height=.4in]{eqsplit}}
   \put(-37, 17){\fontsize{10}{11}$ a$}
  \put(0, 17){\fontsize{10}{11}$ b$}
    \put(-37, -20){\fontsize{10}{11}$ c$}
     \put(0, -20){\fontsize{10}{11}$ d$}
                    \end{picture}  
\right>
      + \left<\,\,
\begin{picture}(40,20)
     \raisebox{-13pt}{  \includegraphics[height=.4in]{flat}}
 \put(-37, 17){\fontsize{10}{11}$ a$}
  \put(0, 17){\fontsize{10}{11}$ b$}
    \put(-37, -20){\fontsize{10}{11}$ c$}
     \put(0, -20){\fontsize{10}{11}$ d$}
              \put(-18, 7){\fontsize{10}{11}$ \neq$}
      \end{picture} 
 \right> 
\]
\caption{Singular crossing decomposition} \label{fig:sing-crossing}
\end{figure}

Notice that the evaluation of a singular crossing is non-zero only when the spins $a, b, c$ and $d$ associated with the four edges incident with the singular crossing satisfy the conservation law $a+b =c+d$. Specifically, the evaluation of a singular crossing is non-zero only when $a=c$ and $b = d$ or $d=a \neq b=c$. 
It is important to note the difference between the left-hand side of the skein relation in Figure ~\ref{fig:sing-crossing} and the last term in the right-hand side of the same skein relation: the latter makes use of spins $a, b, c, d$ such that $d=a \neq b=c$, and is a decorated state of singular crossings.
  
Denote by $Q$ the $n^2 \times n^2$ square matrix corresponding to a singular crossings. Then the latter skein relation can be rewritten in terms of the entries of the matrix $Q$ as follows:
\[Q^{ab}_{cd} = q [a<b]  \delta^a_c \delta^b_d  + q^{-1} [a>b]  \delta^a_c \delta^b_d  +(q+q^{-1}) [a=b]   \delta^a_c \delta^b_d +[a\neq b]   \delta^a_d \delta^b_c,\]
for all $a, b, c, d \in I_n$. Note that since $\brak{G}$ is invariant under the move $R5$, it implies that $RQ = QR$ and $\overline{R}Q = Q\overline{R}$. 

\begin{example}
Using abstract tensor diagrams and matrices $R, \overline{R}$ and $Q$, the Laurent polynomial $\brak{G}$ associated with the diagram $G$ depicted in Figure~\ref{fig:tensor-singlink} is given by the following expression:
\[ \brak{D} = \sum_{a, b, \dots, n \in  I_n} \overleftarrow{M}_{ad} \overleftarrow{M}_{bc}R^{ab}_{ef} R^{ef}_{ij} \overrightarrow{M}^{im} Q^{mj}_{nk} \overrightarrow{M}^{nl} R^{lk}_{hg} Q^{hg}_{dc} \]
where the sum is over all possible choices of indices (spins from $I_n$) in the expression.
\end{example}

\begin{figure}[ht]
\includegraphics[height =1.3in, width = 1in]{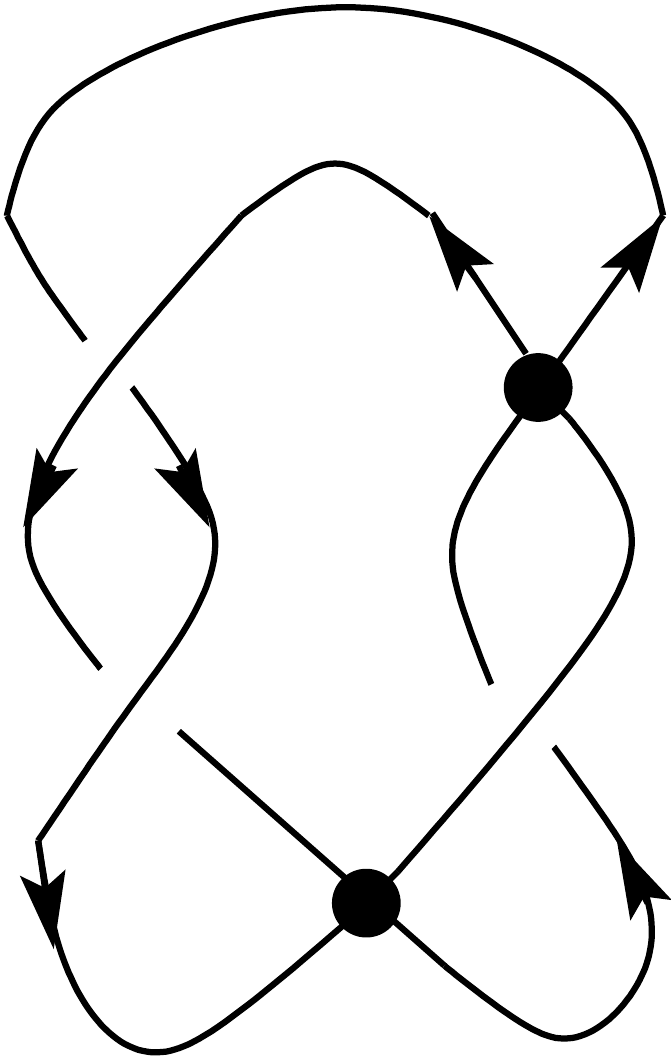}
  \put(-75,67){\fontsize{10}{11}$ a$}
  \put(-48, 66){\fontsize{10}{11}$ b$}
  \put(-30, 66){\fontsize{10}{11}$ c$}
  \put(-3, 66){\fontsize{10}{11}$ d$}
   \put(-77, 43){\fontsize{10}{11}$ e$}
  \put(-48, 43){\fontsize{10}{11}$ f$}
  \put(-30, 44){\fontsize{10}{11}$ g$}
  \put(-3, 43){\fontsize{10}{11}$ h$}
   \put(-76, 20){\fontsize{10}{11}$ i$}
  \put(-51, 17){\fontsize{10}{11}$ j$}
  \put(-31, 22){\fontsize{10}{11}$ k$}
  \put(-1, 20){\fontsize{10}{11}$ l$}
   \put(-46, 0){\fontsize{10}{11}$ m$}
  \put(-28, 0){\fontsize{10}{11}$ n$}
\caption{An abstract tensor singular link diagram}\label{fig:tensor-singlink}
\end{figure}

\begin{proposition} \label{prop:poly-r6} 
For $\alpha = \displaystyle \frac{q}{q-q^{-1}}$ and $\beta = \displaystyle \frac{-q^{-1}}{q-q^{-1}}$, the following skein relations hold:
\begin{eqnarray*}
\left< \raisebox{-15pt}{ \reflectbox{\includegraphics[height=0.5in]{r61}}}\, \right>
               &=& q \,\left<  \raisebox{-13pt}{ \includegraphics[height=.4in]{flatcross}}\, \right>\\
\left<  \raisebox{-15pt}{ \includegraphics[height=0.5in]{r61}}\, \right>
               &=& q^{-1}\left< \raisebox{-13pt}{ \includegraphics[height=.4in]{flatcross}} \, \right>.
\end{eqnarray*}
\end{proposition}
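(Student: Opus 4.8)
The plan is to resolve the singular crossing on the left-hand side of each identity by the decomposition in Figure~\ref{fig:sing-crossing}, and then to collapse the resulting classical diagrams using the relations for $\brak{\cdot}$ already available from the theorem above (invariance under $R2$ and $R3$, the curl relations $\brak{\,\cdot\,}=q^{\pm n}\brak{\,\cdot\,}$ for a positive/negative kink, and the difference relation). Before doing so I would record a convenient operator reformulation of Figure~\ref{fig:sing-crossing}. Writing $\mathrm{id}$ for the two parallel strands, the difference relation says $R-\overline{R}=(q-q^{-1})\,\mathrm{id}$; since the chosen coefficients satisfy $\alpha+\beta=\tfrac{q-q^{-1}}{q-q^{-1}}=1$ and $-\beta(q-q^{-1})=q^{-1}$, substituting $\overline{R}=R-(q-q^{-1})\,\mathrm{id}$ gives
\[ Q \;=\; \alpha R+\beta\overline{R}\;=\;R+q^{-1}\,\mathrm{id}\;=\;\overline{R}+q\,\mathrm{id}. \]
These two expressions for the singular-crossing matrix $Q$ are exactly what will produce the two scalars $q^{-1}$ and $q$ in the proposition.

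Next, for the first identity I would insert $Q=R+q^{-1}\,\mathrm{id}$ at the singular vertex of the (reflected) left-hand diagram. The $R$-term converts the local picture into a strand running over an ordinary crossing, which by $R2$ and $R3$ invariance can be slid clear, reproducing the classical configuration present in the diagram with the extra arc removed; the $q^{-1}\,\mathrm{id}$-term leaves only the two parallel strands together with the displaced arc, which a planar isotopy (and, where a genuine kink appears, the curl relation) reduces to the same underlying configuration. Re-assembling the two contributions through Figure~\ref{fig:sing-crossing} read backwards identifies the total as $q\,\brak{\text{flatcross}}$. For the second identity I would run the mirror computation, using instead $Q=\overline{R}+q\,\mathrm{id}$, so that $R\leftrightarrow\overline{R}$ and $q\leftrightarrow q^{-1}$ are interchanged and the factor becomes $q^{-1}$; alternatively this identity follows from the first by applying the reflection symmetry already used in the proof of $R5$ invariance.

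The step I expect to be the main obstacle is the coefficient bookkeeping in the reduction. One must check that on the $R$- (resp. $\overline{R}$-) term the over-strand truly disappears through a clean sequence of $R2$/$R3$ moves, so that no stray curl sneaks in an unwanted $q^{\pm n}$, and that the arc left on the $\mathrm{id}$-term contributes exactly the factor which, combined with the $q^{-1}$ (resp. $q$) coefficient, rebuilds $\brak{\text{flatcross}}$ and not a different smoothing. Keeping orientations and over/under data consistent across the reflection is the delicate part; once the reduction is organized around the identity $Q=R+q^{-1}\,\mathrm{id}=\overline{R}+q\,\mathrm{id}$, the scalars $q^{\pm 1}$ fall out directly from the coefficients rather than from any loop evaluation.
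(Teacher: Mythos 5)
Your central idea is correct and genuinely different from the paper's proof. Reading Equation~\eqref{eq:general-sing-crossing} as the matrix identity $Q=\alpha R+\beta\overline{R}$ and combining it with the exchange relation $R-\overline{R}=(q-q^{-1})\,\mathrm{id}$, your computations $\alpha+\beta=1$, $-\beta(q-q^{-1})=q^{-1}$, $\alpha(q-q^{-1})=q$ do yield the two decompositions $Q=R+q^{-1}\,\mathrm{id}=\overline{R}+q\,\mathrm{id}$; this is precisely the content of Proposition~\ref{prop:useful}, which the paper only establishes later (and by expansion into decorated states). By contrast, the paper proves Proposition~\ref{prop:poly-r6} by brute force: it expands the classical crossing via Figure~\ref{fig:crossings}, expands each of the three resulting diagrams via Figure~\ref{fig:sing-crossing}, discards the twelve-term expansion's incompatible labelings, and recombines the survivors. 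Your route is shorter and shows exactly where the scalars $q^{\pm1}$ come from.

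However, the reduction step fails as you describe it, precisely at the point you flagged as the main obstacle. The left-hand side of the first identity is locally $R\circ Q$ (the singular vertex stacked with a \emph{positive} crossing of the same two strands). Substituting $Q=R+q^{-1}\,\mathrm{id}$ there produces $R\circ R$, a full positive twist; this cannot be ``slid clear'' by $R2$ and $R3$, since $R2$ only cancels a positive crossing against a negative one, and no displaced arc or kink ever appears (so the curl relation is irrelevant here). The clean fix is to pair the substitutions the other way around: under the positive crossing insert $Q=\overline{R}+q\,\mathrm{id}$ and use channel unitarity $R\circ\overline{R}=\mathrm{id}$, so that
\begin{equation*}
R\circ Q \;=\; R\circ\overline{R}\;+\;q\,R \;=\; \mathrm{id}\;+\;q\,R \;=\; q\bigl(R+q^{-1}\,\mathrm{id}\bigr)\;=\;q\,Q,
\end{equation*}
reading $Q=R+q^{-1}\,\mathrm{id}$ backwards in the last step; symmetrically, $\overline{R}\circ Q=\overline{R}\circ R+q^{-1}\,\overline{R}=\mathrm{id}+q^{-1}\,\overline{R}=q^{-1}\bigl(\overline{R}+q\,\mathrm{id}\bigr)=q^{-1}\,Q$ gives the second identity. (Alternatively, keep your substitution but then invoke the quadratic relation $R\circ R=(q-q^{-1})\,R+\mathrm{id}$, itself a consequence of $R2$ plus the exchange relation.) Either way, no $R3$ moves, kinks, or loop values enter the argument at all, so your worry about stray factors $q^{\pm n}$ disappears once the substitution is paired correctly.
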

\begin{proof}
The first step below makes use of the crossing decomposition from Figure~\ref{fig:crossings} applied to the left-hand side of the first equality, which results in three diagrams. Then, we apply the skein relation depicted in Figure~\ref{fig:sing-crossing} to each of the resulting diagrams.
\begin{eqnarray*}
&&\left< \raisebox{-15pt}{ \reflectbox{\includegraphics[height=0.5in]{r61}}}\, \right>\\
& &= (q-q^{-1})\left< \raisebox{-15pt}{ \reflectbox{\includegraphics[height=0.5in]{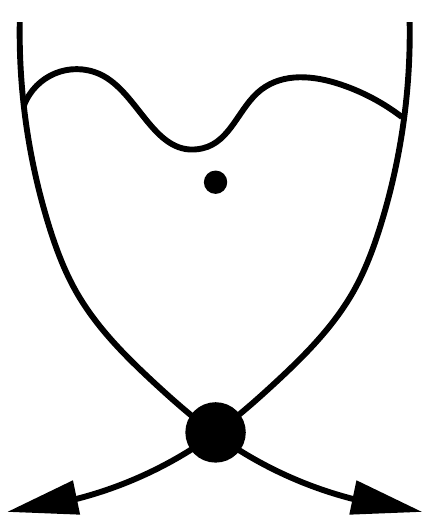}}}\, \right>
+q\left< \raisebox{-15pt}{ \reflectbox{\includegraphics[height=0.5in]{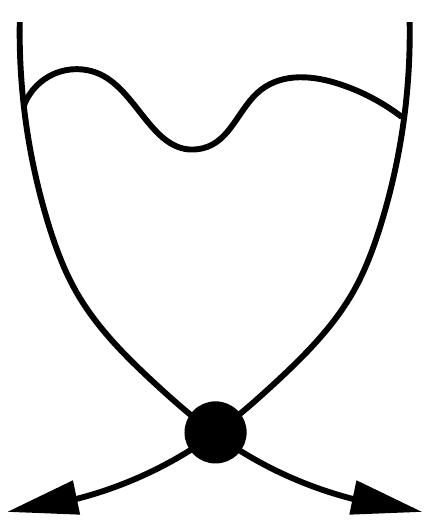}}}\, \right>
+\left< \raisebox{-15pt}{ \reflectbox{\includegraphics[height=0.5in]{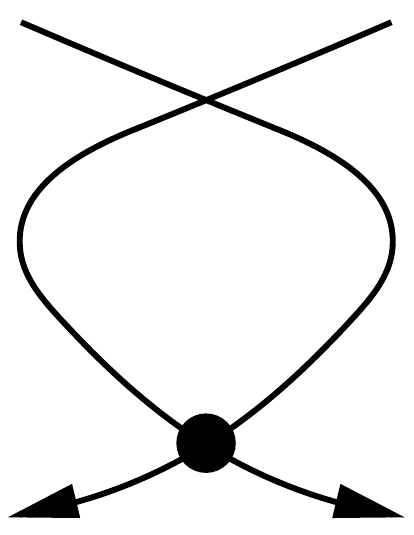}}}\put(-18, 17){\fontsize{10}{11}$ \neq$}\, \right>\\
& & = (q-q^{-1}) \left[q\left< \raisebox{-15pt}{ \reflectbox{\includegraphics[height=0.5in]{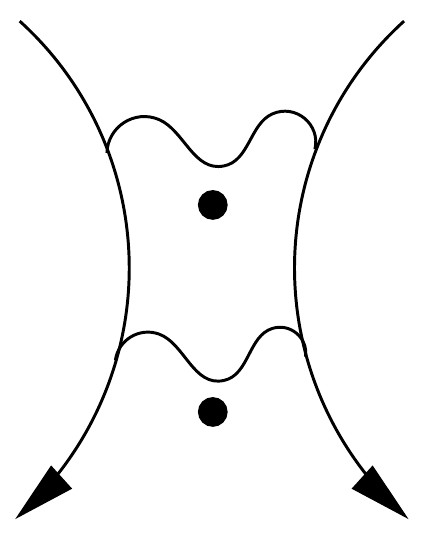}}}\, \right> +
q^{-1} \left< \raisebox{-15pt}{ \reflectbox{\includegraphics[height=0.5in]{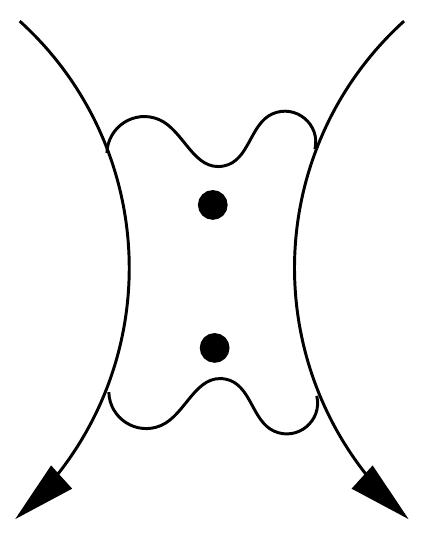}}}\, \right> +
(q+q^{-1})\left< \raisebox{-15pt}{ \reflectbox{\includegraphics[height=0.5in]{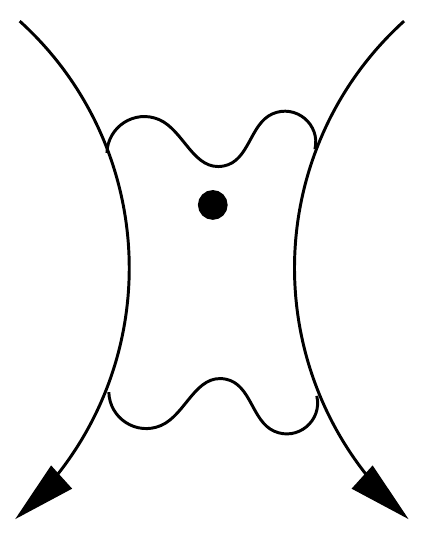}}}\, \right> +
\left< \raisebox{-15pt}{ \reflectbox{\includegraphics[height=0.5in]{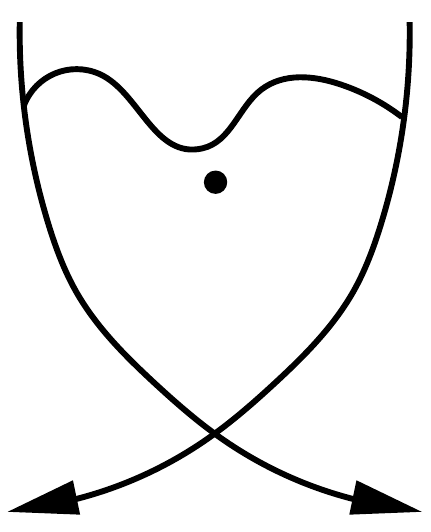}}}\put(-18, -3){\fontsize{10}{11}$ \neq$}\, \right> \right]\\
&&\hspace{0.4cm}+ q \left[q\left< \raisebox{-15pt}{ \reflectbox{\includegraphics[height=0.5in]{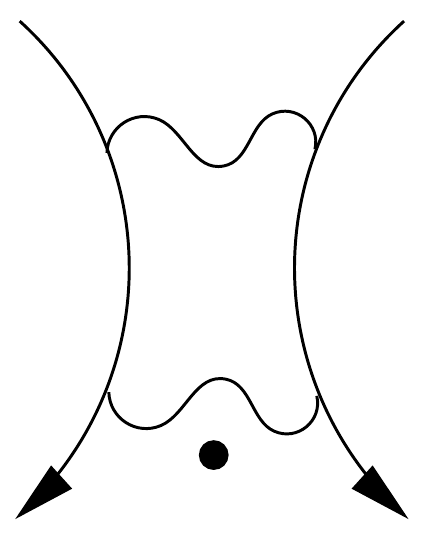}}}\, \right> +
q^{-1}\left< \raisebox{-15pt}{ \reflectbox{\includegraphics[height=0.5in]{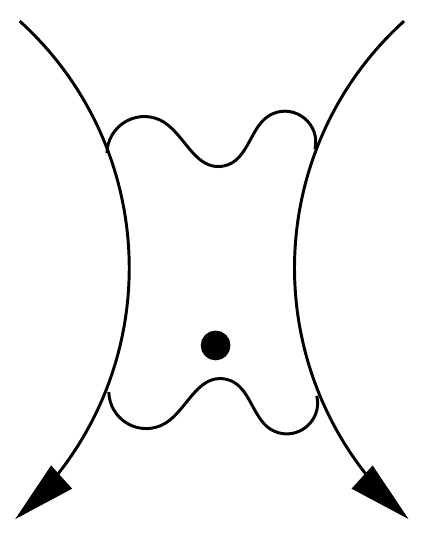}}}\, \right> +
(q+q^{-1})\left< \raisebox{-15pt}{ \reflectbox{\includegraphics[height=0.5in]{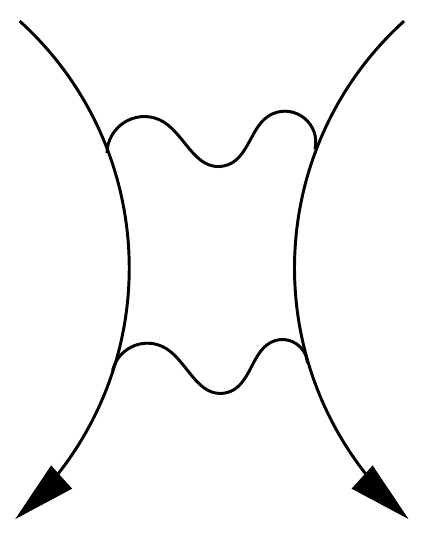}}}\, \right> +
\left< \raisebox{-15pt}{ \reflectbox{\includegraphics[height=0.5in]{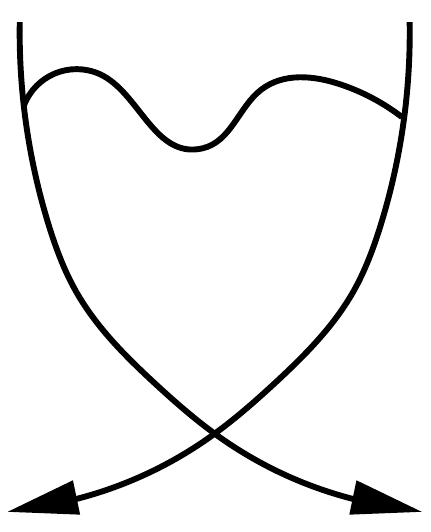}}}\put(-18, -3){\fontsize{10}{11}$ \neq$}\, \right>\right]\\
&&\hspace{0.4cm} + \left[q\left< \raisebox{-15pt}{ \reflectbox{\includegraphics[height=0.5in]{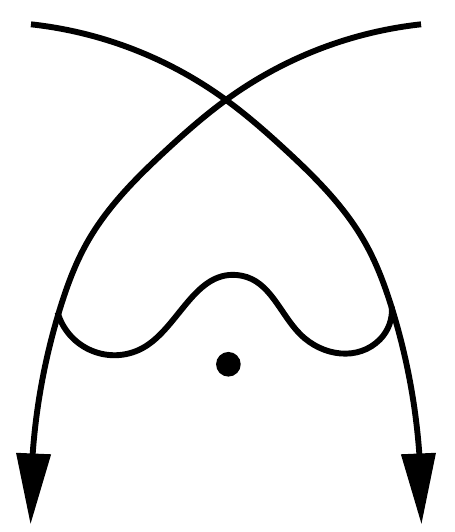}}}\put(-18, 17){\fontsize{10}{11}$ \neq$}\, \right> +
q^{-1}\left< \raisebox{-15pt}{ \reflectbox{\includegraphics[height=0.5in]{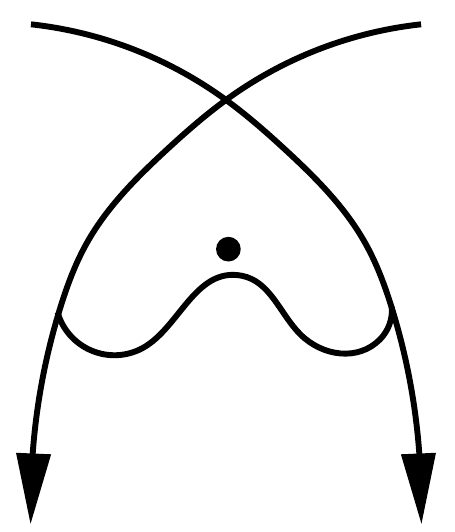}}}\put(-18, 17){\fontsize{10}{11}$ \neq$}\, \right> +
(q+q^{-1})\left< \raisebox{-15pt}{ \reflectbox{\includegraphics[height=0.5in]{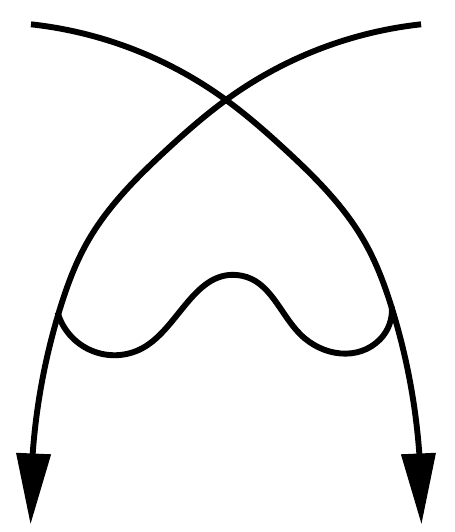}}}\put(-18, 17){\fontsize{10}{11}$ \neq$}\, \right> +
\left< \raisebox{-15pt}{ \reflectbox{\includegraphics[height=0.5in]{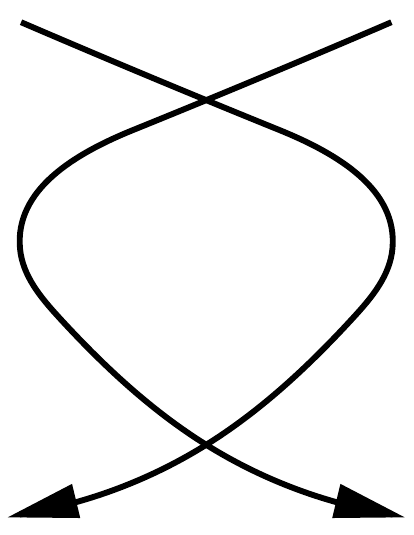}}}\put(-18, 17){\fontsize{10}{11}$ \neq$}\put(-18, -3){\fontsize{10}{11}$ \neq$}\, \right>\right]\\
\end{eqnarray*}
Some of the diagrams above evaluate to 0, due to incompatible labelings of the strands. Specifically, 
\[
\left< \raisebox{-15pt}{ \reflectbox{\includegraphics[height=0.5in]{prop2_5}}}\, \right>  = 
\left< \raisebox{-15pt}{ \reflectbox{\includegraphics[height=0.5in]{prop2_6}}}\, \right> = 
\left< \raisebox{-15pt}{ \reflectbox{\includegraphics[height=0.5in]{prop2_8}}}\, \right> =
\left< \raisebox{-15pt}{ \reflectbox{\includegraphics[height=0.5in]{prop2_9}}}\, \right> = 0,
\]
\[
\left< \raisebox{-15pt}{ \reflectbox{\includegraphics[height=0.5in]{prop2_11}}}\put(-18, -3){\fontsize{10}{11}$ \neq$}\, \right> =
\left< \raisebox{-15pt}{ \reflectbox{\includegraphics[height=0.5in]{prop2_14}}}\put(-18, 17){\fontsize{10}{11}$ \neq$}\, \right> =
0.
\]
In addition, note that
\[
\left< \raisebox{-15pt}{ \reflectbox{\includegraphics[height=0.5in]{prop2_7}}}\put(-18, -3){\fontsize{10}{11}$ \neq$}\, \right> =
\left< \raisebox{-15pt}{ \reflectbox{\includegraphics[height=0.5in]{prop2_13}}}\put(-18, 17){\fontsize{10}{11}$ \neq$}\, \right>,
\hspace{1cm}
\left< \raisebox{-15pt}{ \reflectbox{\includegraphics[height=0.5in]{prop2_12}}}\put(-18, 17){\fontsize{10}{11}$ \neq$}\, \right> +
\left< \raisebox{-15pt}{ \reflectbox{\includegraphics[height=0.5in]{prop2_13}}}\put(-18, 17){\fontsize{10}{11}$ \neq$}\, \right> =
\left< \raisebox{-13pt}{  \includegraphics[height=.4in]{flat}}
              \put(-18, 7){\fontsize{10}{11}$ \neq$}\, \right>.
\]
Putting these together, we have
\begin{eqnarray*}
\left< \raisebox{-15pt}{ \reflectbox{\includegraphics[height=0.5in]{r61}}}\, \right> & = &
q (q-q^{-1}) \left< \raisebox{-15pt}{ \reflectbox{\includegraphics[height=0.45in]{ltsplit}}}\, \right> +
q(q+q^{-1})\left< \raisebox{-15pt}{ \reflectbox{\includegraphics[height=0.45in]{eqsplit}}}\, \right> \\
&&+q\left<\raisebox{-13pt}{  \includegraphics[height=.45in]{flat}}\put(-20, 9){\fontsize{10}{11}$ \neq$}\,\right> +
\left< \raisebox{-15pt}{ \reflectbox{\includegraphics[height=0.5in]{prop2_15}}}\put(-18, 17){\fontsize{10}{11}$ \neq$}\put(-18, -3){\fontsize{10}{11}$ \neq$}\, \right>.
\end{eqnarray*}
Moreover, since 
\[  
\left< \raisebox{-15pt}{ \reflectbox{\includegraphics[height=0.5in]{prop2_15}}}\put(-18, 17){\fontsize{10}{11}$ \neq$}\put(-18, -3){\fontsize{10}{11}$ \neq$}\, \right> = \left< \raisebox{-15pt}{ \reflectbox{\includegraphics[height=0.45in]{ltsplit}}}\, \right> +\left< \raisebox{-15pt}{ \reflectbox{\includegraphics[height=0.45in]{gtsplit}}}\, \right>,
\]
the previous equality is equivalent to
\begin{eqnarray*}
\left< \raisebox{-15pt}{ \reflectbox{\includegraphics[height=0.5in]{r61}}}\, \right> & = &(q^2-1) \left< \raisebox{-15pt}{ \reflectbox{\includegraphics[height=0.45in]{ltsplit}}}\, \right> +q(q+q^{-1})\left< \raisebox{-15pt}{ \reflectbox{\includegraphics[height=0.45in]{eqsplit}}}\, \right>\\
&&+q\left<\raisebox{-13pt}{  \includegraphics[height=.45in]{flat}}\put(-20, 9){\fontsize{10}{11}$ \neq$}\,\right> + \left< \raisebox{-15pt}{ \reflectbox{\includegraphics[height=0.45in]{ltsplit}}}\, \right> +\left< \raisebox{-15pt}{ \reflectbox{\includegraphics[height=0.45in]{gtsplit}}}\, \right>\\
&=&  q\left< \raisebox{-13pt}{ \includegraphics[height=0.4in]{flatcross}}\, \right>.
\end{eqnarray*}
Therefore, the first skein relation holds. The second relation is proved in a similar fashion. \end{proof}

The \textit{mirror image} of a singular link with diagram $G$ is the singular link whose diagram $G^*$ is obtained from $G$ by replacing each (classical) positive crossing with a negative crossing and vice versa. A singular link is said to be \textit{achiral} if it is ambient isotopic to its mirror image. Otherwise, $G$ is called \textit{chiral}.

\begin{proposition}
Let $G$ be an oriented singular link and $G^*$ its mirror image. Then the polynomial $\brak{G^*}$ is obtained from $\brak{G}$ by replacing $q$ with $q^{-1}$. That is,
\[\brak{G^*}(q) = \brak{G}(q^{-1}).\]
\end{proposition}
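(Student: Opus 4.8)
The plan is to work entirely in the tensor/matrix description of $\brak{G}$ set up in Section~\ref{sec:YB-sl(n)poly}. Recall that $\brak{G}$ is obtained by decomposing the diagram with respect to a height function and contracting the matrices $R, \overline{R}, Q$ (attached to positive, negative, and singular crossings) together with the cap/cup matrices $\overrightarrow{M}, \overleftarrow{M}, \overrightarrow{M}, \overleftarrow{M}$, while summing each edge-spin over $I_n$. The point is that passing to the mirror image $G^*$ is, in this language, the purely local substitution $R \leftrightarrow \overline{R}$ in the contraction, leaving every $Q$ and every cap/cup matrix untouched (the underlying planar diagram, its minima, maxima, arcs and singular crossings are unchanged).

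First I would record the key substitution identities. Since $I_n$ is symmetric about $0$, the negation $\nu\co a \mapsto -a$ is a bijection of $I_n$, and one has $[a<b]=[-a>-b]$, $[a=b]=[-a=-b]$, $[a\neq b]=[-a\neq -b]$, and $\delta^a_c=\delta^{-a}_{-c}$. A direct term-by-term computation from the formulas for $R, \overline{R}, Q$ then gives, for all $a,b,c,d\in I_n$, the identities $R^{ab}_{cd}(q^{-1}) = \overline{R}^{-a,-b}_{-c,-d}(q)$, $\overline{R}^{ab}_{cd}(q^{-1}) = R^{-a,-b}_{-c,-d}(q)$, and $Q^{ab}_{cd}(q^{-1}) = Q^{-a,-b}_{-c,-d}(q)$; and for each cap/cup, e.g.\ $\overrightarrow{M}^{ab}(q^{-1}) = q^{-a/2}\delta^{a,b} = \overrightarrow{M}^{-a,-b}(q)$, and likewise for the other three (the $q^{a/2}\mapsto q^{-a/2}$ under $q\to q^{-1}$ matches exactly the negation of the index). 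In words: evaluating any local tensor at $q^{-1}$ equals evaluating its mirror partner at $q$ with all of its indices negated.

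Then I would substitute these identities into the contraction defining $\brak{G}(q^{-1})$. Every factor turns into its mirror partner evaluated at $q$, but carrying negated edge-spins. Because every summed spin occurs somewhere in the product, I may re-index the entire sum by the global bijection $a_i\mapsto -a_i$ on $I_n^{\#\mathrm{edges}}$; this undoes the negation in every factor simultaneously and leaves precisely the contraction for the diagram with $R$ and $\overline{R}$ interchanged, which is $\brak{G^*}(q)$. This yields $\brak{G}(q^{-1}) = \brak{G^*}(q)$, as claimed.

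The only genuinely delicate point is verifying the substitution identities, and within them the main obstacle is confirming that the leading terms of $R$ and $\overline{R}$ — carrying $[a<b]$ versus $[a>b]$ — are interchanged under $\nu$, since this interchange is exactly what produces the mirror-swap $R\leftrightarrow\overline{R}$; one must also check that $Q$ is \emph{fixed} by the operation (as it should be, singular crossings being unchanged by mirroring), which holds because its two "vertical" coefficients $q$ and $q^{-1}$ are swapped both by $q\to q^{-1}$ and by $a\leftrightarrow b$. These are finite, routine checks. I would note in passing the equivalent state-sum formulation: one builds an explicit bijection $\sigma\leftrightarrow\sigma^*$ between states of $G$ and of $G^*$ by negating all spins and keeping the same planar smoothing at each vertex, and then checks $b_{\sigma^*}(q)=b_\sigma(q^{-1})$ together with $\|\sigma^*\|=-\|\sigma\|$ (the rotation numbers are unchanged while labels negate), so that $\brak{\sigma^*}(q)=\brak{\sigma}(q^{-1})$ and the conclusion follows after summing over states.
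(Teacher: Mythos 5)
Your proof is correct, and it follows the same basic route as the paper's proof --- mirror image swaps $R \leftrightarrow \overline{R}$ while fixing $Q$ and the caps/cups --- but you carry it out rigorously where the paper only asserts it, and in doing so you supply a step the paper's two-sentence proof glosses over. The paper says that reversing all classical crossings ``has the effect of interchanging $q$ and $q^{-1}$ in the definition'' and that the singular-crossing evaluation is unchanged under $q \leftrightarrow q^{-1}$. Taken literally at the level of matrix entries this is not true: $R^{ab}_{cd}(q^{-1}) \neq \overline{R}^{ab}_{cd}(q)$ in general, since substituting $q \mapsto q^{-1}$ changes coefficients but does not turn the indicator $[a<b]$ into $[a>b]$; likewise $Q^{ab}_{cd}(q^{-1}) \neq Q^{ab}_{cd}(q)$, because the coefficients $q$ and $q^{-1}$ on the $a<b$ and $a>b$ terms get swapped. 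What makes the argument work is precisely your observation that $I_n$ is symmetric under negation, so that conjugating by the spin-negation bijection restores the identities $R^{ab}_{cd}(q^{-1}) = \overline{R}^{-a,-b}_{-c,-d}(q)$, $\overline{R}^{ab}_{cd}(q^{-1}) = R^{-a,-b}_{-c,-d}(q)$, $Q^{ab}_{cd}(q^{-1}) = Q^{-a,-b}_{-c,-d}(q)$, and $\overrightarrow{M}^{ab}(q^{-1}) = \overrightarrow{M}^{-a,-b}(q)$; re-indexing the global sum over spins by negation then converts the contraction computing $\brak{G}(q^{-1})$ into the one computing $\brak{G^*}(q)$. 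Your closing state-sum remark (weights match under spin negation, and $||\sigma^*|| = -||\sigma||$ since rotation numbers persist while labels negate) packages the same point without the height-function decomposition, and is essentially the honest version of the paper's argument. In short: same approach, but your write-up makes explicit the negation re-indexing that the proof genuinely requires and that the paper leaves implicit.
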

\begin{proof}
$G^*$ is obtained from $G$ by reversing all classical crossings, which has the effect of interchanging $q$ and $q^{-1}$ in the definition of $\brak{\,\, \cdot \,\,}$. On the other hand, the evaluation of a singular crossing remains the same when $q$ and $q^{-1}$ are interchanged. Therefore, the statement holds.
\end{proof}
\begin{corollary}
If $\brak{G}(q) \neq \brak{G}(q^{-1})$, then $G$ is a chiral singular link.
\end{corollary}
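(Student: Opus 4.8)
The plan is to prove the corollary by contraposition. Rather than reasoning about the hypothesis $\brak{G}(q)\neq\brak{G}(q^{-1})$ directly, I would establish the implication that \emph{if $G$ is achiral, then $\brak{G}(q)=\brak{G}(q^{-1})$}; the corollary is then precisely its contrapositive. This reduces the problem to a short chain of equalities, the inputs for which are the definition of achirality together with the Proposition just proved.

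So suppose $G$ is achiral. By definition this means that $G$ is equivalent (ambient isotopic) to its mirror image $G^*$, so a diagram of $G$ and a diagram of $G^*$ are related by a finite sequence of the moves generating equivalence of singular links. Using the invariance of $\brak{\,\cdot\,}$ under these moves, I would deduce $\brak{G}=\brak{G^*}$ as elements of $\Z[q,q^{-1}]$. Invoking the preceding Proposition, which supplies $\brak{G^*}(q)=\brak{G}(q^{-1})$, I would chain the two equalities to obtain $\brak{G}(q)=\brak{G}(q^{-1})$. Taking the contrapositive yields exactly the stated corollary: whenever $\brak{G}(q)\neq\brak{G}(q^{-1})$, the singular link $G$ cannot be achiral, hence is chiral.

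The one step deserving care — and the only genuine obstacle — is the passage $\brak{G}=\brak{G^*}$, since $\brak{\,\cdot\,}$ was shown to be only a \emph{regular} isotopy invariant, while achirality is phrased in terms of ambient isotopy and therefore also permits Reidemeister I moves, each of which rescales the polynomial by $q^{\pm n}$. To handle this cleanly I would pass to the writhe-normalized quantity $\widehat{G}(q)=q^{-n\,w(G)}\brak{G}(q)$, where $w(G)$ is the writhe (the sum of the signs of the classical crossings); this is a bona fide ambient isotopy invariant. Since mirroring sends $w(G)\mapsto -w(G)$, combining this with the Proposition gives $\widehat{G^*}(q)=\widehat{G}(q^{-1})$, so that achirality forces the normalized invariant to satisfy $\widehat{G}(q)=\widehat{G}(q^{-1})$. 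If instead one interprets achirality at the level of regular isotopy, then the step $\brak{G}=\brak{G^*}$ is immediate from invariance under Reidemeister II, III and the moves $R4,R5$, no normalization is needed, and the unnormalized conclusion $\brak{G}(q)=\brak{G}(q^{-1})$ holds verbatim — which is the reading under which the corollary is stated.
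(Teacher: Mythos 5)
Your proof is correct and is essentially the paper's: the paper states this corollary without proof, treating it as the immediate contrapositive of the preceding proposition, i.e.\ achirality gives $\brak{G}(q)=\brak{G^*}(q)=\brak{G}(q^{-1})$. What you flag as "the one step deserving care" is, however, not a pedantic aside but a genuine repair of a gap in the paper. Since $\brak{\,\cdot\,}$ is only a regular isotopy invariant, the paper's literal definition of achirality (ambient isotopic to the mirror image) makes the unnormalized statement false as written: the unknot is achiral, yet its diagram $D$ with a single positive kink has $\brak{D}(q)=q^{n}[n]\neq q^{-n}[n]=\brak{D}(q^{-1})$. Ambient achirality only yields $q^{-n\,w(G)}\brak{G}(q)=q^{-n\,w(G^*)}\brak{G^*}(q)$, hence $\brak{G}(q)=q^{2n\,w(G)}\brak{G}(q^{-1})$, which collapses to the paper's claim exactly when the writhe vanishes. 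So one must either pass to the writhe-normalized polynomial $q^{-n\,w(G)}\brak{G}(q)$, as you do, or read chirality at the level of regular isotopy, as you also note; under either reading your chain of equalities is complete. In short, your argument follows the same route the paper intends, but your normalization step is actually needed for the corollary to be true under the paper's own definitions.
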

\begin{proposition}
Let $G_1 \cup G_2$ be the disjoint union of oriented singular links $G_1$ and $G_2$. Then,
\[  \brak{G_1 \cup G_2} = \brak{G_1} \brak{G_2}. \]
\end{proposition}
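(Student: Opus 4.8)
The plan is to argue directly from the state-sum definition in Equation~\eqref{eq:sing-polyn-eval}, exploiting the fact that a disjoint union can be drawn so that $G_1$ and $G_2$ occupy disjoint disks in the plane, with no crossing (classical or singular) involving an edge of $G_1$ together with an edge of $G_2$. The first step is to establish that the set of states of $G_1 \cup G_2$ is in canonical bijection with the set of pairs $(\sigma_1, \sigma_2)$, where $\sigma_i$ ranges over the states of $G_i$. Indeed, a state is obtained by locally replacing each crossing by a decorated splice or a flat crossing and then labelling edges by spins from $I_n$ subject to the rules of Figure~\ref{fig:crossings} and Figure~\ref{fig:sing-crossing}; since every crossing of $G_1 \cup G_2$ belongs entirely to $G_1$ or entirely to $G_2$, and since no edge is shared between the two components, the local data and the labelling constraints split into independent constraints on $G_1$ and on $G_2$. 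Thus a choice of state for the union is exactly a choice of state $\sigma_1$ for $G_1$ together with a choice of state $\sigma_2$ for $G_2$, and an incompatible (discarded) labelling on one factor makes the combined state incompatible, consistent with setting $\brak{\sigma} = 0$.

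Next I would verify that the two ingredients of the summand, the weight $b_\sigma$ and the state evaluation $\brak{\sigma} = q^{\|\sigma\|}$, each factor over this bijection. The weight $b_\sigma$ is by definition the product of the local crossing weights coming from Figure~\ref{fig:crossings} and Figure~\ref{fig:sing-crossing}; because these crossings partition into those of $G_1$ and those of $G_2$, we get $b_{(\sigma_1,\sigma_2)} = b_{\sigma_1} \, b_{\sigma_2}$. For the state evaluation, the key observation is that the collection of loops of the combined state $\sigma_1 \cup \sigma_2$ is precisely the disjoint union of the loops of $\sigma_1$ and those of $\sigma_2$: placing the two diagrams in disjoint disks merges no loops and creates no new ones, and the rotation number $\mathrm{rot}(l)$ is an intrinsic invariant of each loop (its Whitney degree), unaffected by the presence of the other component. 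Hence, by Equation~\eqref{eq:state-eval}, $\|\sigma_1 \cup \sigma_2\| = \|\sigma_1\| + \|\sigma_2\|$, so $q^{\|\sigma_1 \cup \sigma_2\|} = q^{\|\sigma_1\|} \, q^{\|\sigma_2\|}$.

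Combining these factorizations, the state sum separates: $\brak{G_1 \cup G_2} = \sum_{(\sigma_1,\sigma_2)} b_{\sigma_1} b_{\sigma_2}\, q^{\|\sigma_1\|} q^{\|\sigma_2\|} = \bigl(\sum_{\sigma_1} b_{\sigma_1} q^{\|\sigma_1\|}\bigr)\bigl(\sum_{\sigma_2} b_{\sigma_2} q^{\|\sigma_2\|}\bigr) = \brak{G_1}\,\brak{G_2}$. I expect the only genuine point requiring care to be the loop-level claim underlying the additivity of $\|\cdot\|$: one must be sure that forming the disjoint union neither fuses loops from different components nor alters any individual rotation number. This is immediate once the union is realized with the two diagrams separated by a properly embedded arc, so that every loop lies strictly inside one of the two disks; the remaining steps are bookkeeping on the product structure of the summand.
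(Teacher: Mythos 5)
Your proof is correct, but it takes a genuinely different route from the paper's. The paper's (largely omitted) argument is a reduction: it first observes that multiplicativity under disjoint union is already known for the classical $sl(n)$ state model, and then extends it to singular links by induction on the number of singular crossings, the inductive step coming from the skein relation~\eqref{eq:general-sing-crossing}, which writes a singular crossing as a $\Z[q,q^{-1}]$-linear combination of a positive and a negative classical crossing; since that resolution is local, it is compatible with the disjoint-union decomposition, and linearity of $\brak{\,\cdot\,}$ carries the identity from $k-1$ to $k$ singular crossings. You instead factorize the state sum of Equation~\eqref{eq:sing-polyn-eval} directly: states of $G_1 \cup G_2$ correspond to pairs of states, the weights $b_\sigma$ multiply because every crossing lies wholly in one factor, and $\|\cdot\|$ from Equation~\eqref{eq:state-eval} is additive because the loops of the combined state are exactly the loops of the two factors and rotation numbers are intrinsic to each disk. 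The trade-offs: your argument is self-contained and uniform — it proves the classical base case rather than quoting it, needs no induction, and works verbatim for arbitrary $\alpha, \beta$ in~\eqref{eq:general-sing-crossing}; the paper's argument is shorter given the known classical statement, avoids re-opening the state-sum definition, and exhibits a general principle, namely that any regular isotopy invariant of classical links that is multiplicative under disjoint union remains so after being extended to singular links via a local skein resolution of the singular crossings.
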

\begin{proof}
Notice that this formula holds when $G_1$ and $G_2$ are classical links (that is, when $G_1$ and $G_2$ have no singular crossings). Then the statement is verified for singular links using a standard proof by induction on the number of singular crossings, and thus it is omitted.
\end{proof}
A singular link diagram $G$ is a \textit{connected sum}, denoted by $G = G_1 \# G_2$, if it is displayed as two disjoint singular link diagrams $G_1$ and $G_2$ connected by parallel embedded arcs, up to planar isotopy, as in Figure~\ref{fig:connected-sum}. The following result holds for classical links, and can be proved for singular links, as well, by induction on the number of singular crossings.
\begin{figure}[ht]
\includegraphics[height=0.7in]{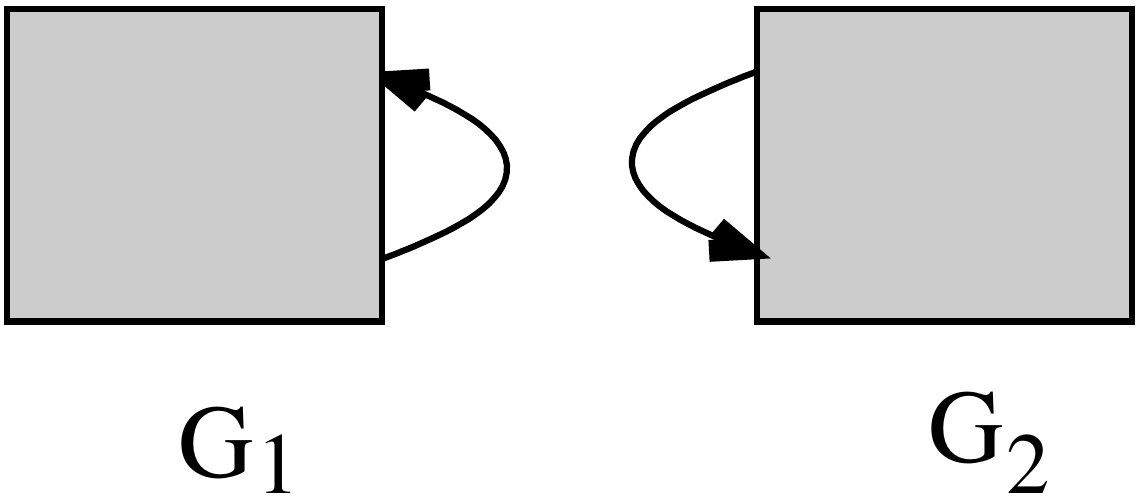} \hspace{1cm} \includegraphics[height=0.7in]{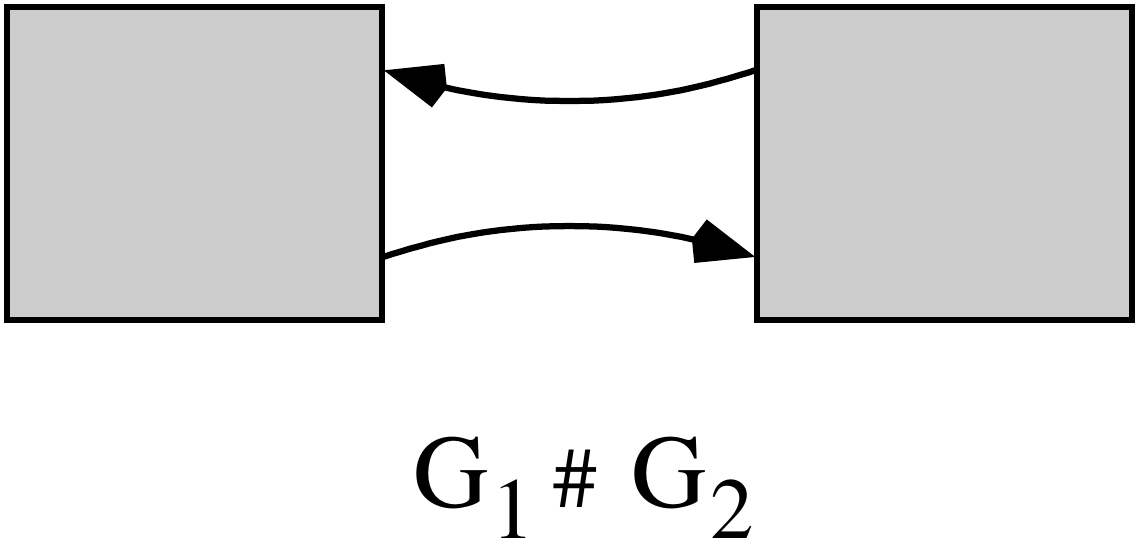}
\caption{A connected sum} \label{fig:connected-sum}
\end{figure}

\begin{proposition}
Let $G$ be an oriented singular link diagram with the property that $G = G_1 \# G_2$, for some oriented singular link diagrams $G_1$ and $G_2$. Then the polynomial $\brak{G}$ can be computed as follows:
\[ \brak{G} = \frac{1}{[n]}\brak{G_1} \brak{G_2}.  \]
\end{proposition}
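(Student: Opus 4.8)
The plan is to reduce the connected-sum formula to the disjoint-union formula from the previous proposition, by exploiting the rotation-number structure of the state sum. The key observation is that a connected sum $G = G_1 \# G_2$ differs from the disjoint union $G_1 \cup G_2$ only in a single small region, where the two components are joined by a pair of parallel arcs as in Figure~\ref{fig:connected-sum}. In a state $\sigma$ of $G$, these two joining arcs belong to a single loop that passes from $G_1$ into $G_2$ and back, whereas in the disjoint union they would be cut into separate loops. Because the state evaluation \eqref{eq:state-eval} is multiplicative over loops and the weights $b_\sigma$ are local (each crossing or singular crossing contributes a factor depending only on its four incident spins), the only difference between $\brak{G}$ and $\brak{G_1}\brak{G_2}$ will come from how loops are counted and weighted across the connecting band.

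First I would set up the comparison precisely. States of $G$ and states of $G_1 \cup G_2$ are in bijection after we account for the constraint that the two parallel connecting arcs carry equal spin (they lie on a common loop, hence have constant spin $a$), and that the rotation numbers match up. The joining band can be arranged by planar isotopy so that it introduces exactly one extra maximum--minimum cancellation relative to the disjoint picture; concretely, using the creation/annihilation identities $\sum_i \overrightarrow{M}^{ai}\overrightarrow{M}_{ib} = \delta^a_b$ established earlier, one sees that summing over the spin $a$ on the connecting strand contributes a factor $\sum_{a \in I_n} 1 = n$ too many, or rather that identifying the two loops into one replaces a product $q^{(\cdots)a}\cdot q^{(\cdots)a}$-type double count by a single loop contribution. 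The precise bookkeeping shows that the disjoint-union state sum overcounts the connected-sum state sum by exactly the loop value $[n]$ of the strand along which the two diagrams are glued.

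Concretely, I would argue that
\[
\brak{G_1 \cup G_2} = [n]\,\brak{G},
\]
which, combined with the previous proposition $\brak{G_1 \cup G_2} = \brak{G_1}\brak{G_2}$, immediately yields the claimed formula $\brak{G} = \tfrac{1}{[n]}\brak{G_1}\brak{G_2}$. The factor $[n]$ arises because the connecting band carries a single strand whose spin $a$ ranges over $I_n$, and cutting that strand to form the disjoint union introduces an independent loop contributing $\sum_{a\in I_n} q^{a} = [n]$, exactly as in the loop-value computation recalled in Section~\ref{sec:YB-sl(n)poly}. As the paper suggests, this is most cleanly handled by a standard induction on the number of singular crossings: the base case is the classical connected-sum formula (known for the $sl(n)$ polynomial), and the inductive step resolves one singular crossing via the decomposition in Figure~\ref{fig:sing-crossing}, whose four terms are each honest linear combinations of diagrams with fewer singular crossings, so linearity of $\brak{\,\cdot\,}$ propagates the formula.

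The main obstacle I expect is verifying that the band region contributes precisely $[n]$ and not some orientation-dependent or rotation-number-dependent correction. The subtlety is that in the connected sum the joining strand may wind around and alter the rotation numbers of the loops in $G_1$ and $G_2$ relative to the disjoint configuration; I would need to check, using planar isotopy and the regular-isotopy invariance already established, that one can choose the band so that no spurious rotation occurs, so that $\text{rot}(l)$ is unchanged for every loop away from the band and the single new loop created upon cutting has rotation number $\pm 1$ contributing exactly $\sum_{a} q^{\pm a} = [n]$. Once this local normalization is pinned down, the multiplicativity of both $\brak{\sigma}$ over loops and $b_\sigma$ over crossings makes the rest routine.
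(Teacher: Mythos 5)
Your proposal really contains two arguments, and the one you lean on hardest has a genuine gap. For $i=1,2$, let $f_i(a)$ denote the sub-sum of $\brak{G_i}$ over states whose loop through the connect-sum point carries spin $a$, so that $\brak{G_i}=\sum_a f_i(a)$. Your bijection of states is fine (each disk contributes exactly one arc joining its two boundary points, so the two connecting arcs lie on a single loop of constant spin), and after normalizing the band it yields $\brak{G}=\sum_a f_1(a)\,f_2(a)\,q^{\mp a}$, while $\brak{G_1\cup G_2}=\bigl(\sum_a f_1(a)\bigr)\bigl(\sum_b f_2(b)\bigr)$. For arbitrary functions $f_1,f_2$ these two quantities are \emph{not} related by a factor of $[n]$: the mechanism ``cutting the strand creates an independent loop worth $[n]$'' is not valid, because passing to the disjoint union does not sum a free spin against a constant --- it makes the two spins $a$ and $b$ independent. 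What you actually need is that the cut-open $(1,1)$-tangle evaluates to a scalar independent of the spin on its open strand, i.e.\ $f_i(a)=\lambda_i q^{\pm a}$ with $\lambda_i$ not depending on $a$; then $\brak{G_i}=\lambda_i[n]$, $\brak{G}=\lambda_1\lambda_2[n]$, and the formula follows. That spin-independence is essentially the content of the connected-sum formula itself (for classical links it is the known $sl(n)$ fact), so your bookkeeping presupposes what it is meant to prove; loop multiplicativity and rotation-number normalization alone cannot deliver it.

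Your fallback --- induction on the number of singular crossings with the classical connected-sum formula as base case --- is precisely the paper's proof (the paper offers nothing beyond that one sentence), but your description of the inductive step is off: the four terms in Figure~\ref{fig:sing-crossing} are decorated splices and a flat crossing (pieces carrying the labels $<$, $=$, $>$, $\neq$), i.e.\ state-sum objects, not singular link diagrams with fewer singular crossings, so neither the induction hypothesis nor the classical base case applies to them. The relation to use is Equation~\eqref{eq:general-sing-crossing}: a singular crossing equals $\alpha$ times a positive crossing plus $\beta$ times a negative crossing, both honest singular link diagrams with one fewer singular crossing. Since the resolved crossing sits inside $G_1$ (say), each term is still a connected sum with the same $G_2$, and linearity gives $\brak{G}=\alpha\brak{G_1^{+}\# G_2}+\beta\brak{G_1^{-}\# G_2}=\frac{1}{[n]}\bigl(\alpha\brak{G_1^{+}}+\beta\brak{G_1^{-}}\bigr)\brak{G_2}=\frac{1}{[n]}\brak{G_1}\brak{G_2}$, where $G_1^{\pm}$ is $G_1$ with the singular crossing replaced by a positive or negative crossing. (The relations of Proposition~\ref{prop:useful} would serve equally well.) With that substitution your induction is correct and coincides with the paper's argument.
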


\section{Representations of the singular braid monoid}\label{sec:repres}

Let $G$ be a singular link diagram and consider the polynomial $\brak{G}$ defined by the Equation~\eqref{eq:sing-polyn-eval}, with $\alpha = \displaystyle \frac{q}{q-q^{-1}}$ and $\beta = \displaystyle \frac{-q^{-1}}{q-q^{-1}}$. In this section we show how to use the Yang-Baxter state model for $\brak{G}$ to define, for each $n \in \Z , n\geq 2$, a representation of the singular braid monoid into a matrix algebra.

Recall the $n^2 \times n^2$ matrices $R$ and $\overline{R}$ associated with a positive and a negative crossing, respectively (and satisfying the YBE), and the $n^2 \times n^2$ matrix $Q$ corresponding to a singular crossing. These matrices have entries given by:
\begin{eqnarray*}
R^{ab}_{cd}&=&(q-q^{-1}) [a<b]  \text{ } \delta^a_c \text{ }\delta^b_d  + q [a=b] \text{ }  \delta^a_c  \text{ } \delta^b_d  +[a\neq b]  \text{ }  \delta^a_d  \text{ } \delta^b_c\\
\overline{R}^{ab}_{cd}&=&(q^{-1}-q) [a>b]  \text{ } \delta^a_c  \text{ } \delta^b_d  + q^{-1} [a=b]  \text{ } \delta^a_c  \text{ } \delta^b_d  +[a\neq b]   \text{ } \delta^a_d  \text{ } \delta^b_c \\
Q^{ab}_{cd} &=& q [a<b]  \delta^a_c \delta^b_d  + q^{-1} [a>b]  \delta^a_c \delta^b_d  +(q+q^{-1}) [a=b]   \delta^a_c \delta^b_d +[a\neq b]   \delta^a_d \delta^b_c
\end{eqnarray*}
for all $a, b, c, d \in I_n$.
That is, the matrices $R = (R^{ab}_{cd})$ and $\overline{R} = (\overline{R}^{ab}_{cd})$ look as follows:
\[
R^{ab}_{cd} = \begin{cases} q-q^{-1}\,\,\, \text{ if} \,\,\,\, c =a < b =d\\
q \hspace{1.2 cm}\text{ if} \,\,\,\,c=a=b=d\\
1\hspace{1.2cm} \text{ if }\,\,d =a \neq b =c \\
0 \hspace{1cm} \text{otherwise}
\end{cases}
\hspace{0.5cm}
\overline{R}^{ab}_{cd} = \begin{cases} q^{-1} - q\,\,\, \text{ if} \,\,\,\, c =a > b =d\\
q^{-1} \hspace{0.8cm}\text{ if} \,\,\,\,c=a=b=d\\
1\hspace{1.2cm} \text{ if }\,\, d =a \neq b =c \\
0 \hspace{1cm} \text{otherwise}
\end{cases}
\]
In addition, the matrix $Q = (Q^{ab}_{cd})$ is given by:
\[ 
Q^{ab}_{cd} = \begin{cases} q+q^{-1}\,\,\, \text{ if} \,\,\,\, c =a = b =d\\
q \hspace{1.2 cm}\text{ if} \,\,\,\,c=a < b=d\\
q^{-1} \hspace{0.8 cm}\text{ if} \,\,\,\,c=a > b=d\\
1\hspace{1.2cm} \text{ if }\,\,d =a \neq b =c \\
0 \hspace{1cm} \text{otherwise}
\end{cases}
\]
For $n =2$ the index set is $I_2=\{-1, 1\}$, giving the following matrices:
\[  R_2=\begin{bmatrix}
 q&0&0&0\\ 0&q-q^{-1}&1&0\\ 0&1&0&0\\ 0&0&0&q \end{bmatrix}, \hspace{1cm} \overline{R}_2=\begin{bmatrix}
 q^{-1}&0&0&0\\ 0&0&1&0\\ 0&1&q^{-1}-q&0\\ 0&0&0&q^{-1} \end{bmatrix}\] 

 \[  Q_2=\begin{bmatrix}
 q+q^{-1}&0&0&0\\ 0&q&1&0\\ 0&1&q^{-1}&0\\ 0&0&0&q+q^{-1} \end{bmatrix}\] 
 For $n=3$ the index set becomes $I_3=\{-2,0,2\}$ and the corresponding matrices $R, \overline{R}$ and $Q$ are:

\[  R_3=\begin{bmatrix}
 q&0&0&0&0&0&0&0&0\\0&q-q^{-1}&0&1&0&0&0&0&0\\0&0&q-q^{-1}&0&0&0&1&0&0\\0&1&0&0&0&0&0&0&0\\0&0&0&0&q&0&0&0&0\\0&0&0&0&0&q-q^{-1}&0&1&0\\0&0&1&0&0&0&0&0&0\\0&0&0&0&0&1&0&0&0\\0&0&0&0&0&0&0&0&q \end{bmatrix}\] 
 
  \[ \overline{R}_3=\begin{bmatrix}
 q^{-1}&0&0&0&0&0&0&0&0\\0&0&0&1&0&0&0&0&0\\0&0&0&0&0&0&1&0&0\\0&1&0&q^{-1}-q&0&0&0&0&0\\0&0&0&0&q^{-1}&0&0&0&0\\0&0&0&0&0&0&0&1&0\\0&0&1&0&0&0&q^{-1}-q&0&0\\0&0&0&0&0&1&0&q^{-1}-q&0\\0&0&0&0&0&0&0&0&q^{-1} \end{bmatrix}\] 

 \[ Q_3=\begin{bmatrix}
 q+q^{-1}&0&0&0&0&0&0&0&0\\0&q&0&1&0&0&0&0&0\\0&0&q&0&0&0&1&0&0\\0&1&0&q^{-1}&0&0&0&0&0\\0&0&0&0&q+q^{-1}&0&0&0&0\\0&0&0&0&0&q&0&1&0\\0&0&1&0&0&0&q^{-1}&0&0\\0&0&0&0&0&1&0&q^{-1}&0\\0&0&0&0&0&0&0&0&q+q^{-1} \end{bmatrix}\] 
It can be shown that, for any fixed $n \in \mathbb{Z}$, $n\geq 2$, 
\[R_nQ_n= Q_nR_n = q \cdot Q_n \hspace{0.5cm} \text{and} \hspace{0.5cm} \overline{R}_nQ_n= Q_n\overline{R}_n = q^{-1} \cdot Q_n,\] 
which mimic the properties of the polynomial $\brak{G}$ discussed in Section~\ref{sec:inv-singlinks}.

The \textit{singular braid monoid} on $k$ strands, denoted by $SB_k$, is a monoid with generators $\sigma_i, \sigma_i^{-1}$ and $\tau_i$, where $1\leq i\leq k-1$:

\vspace{0.8cm}

$$\begin{picture}(50,15)
\raisebox{0.7cm}{$\sigma_i=$}\,\,
 \includegraphics[height=1.7cm]{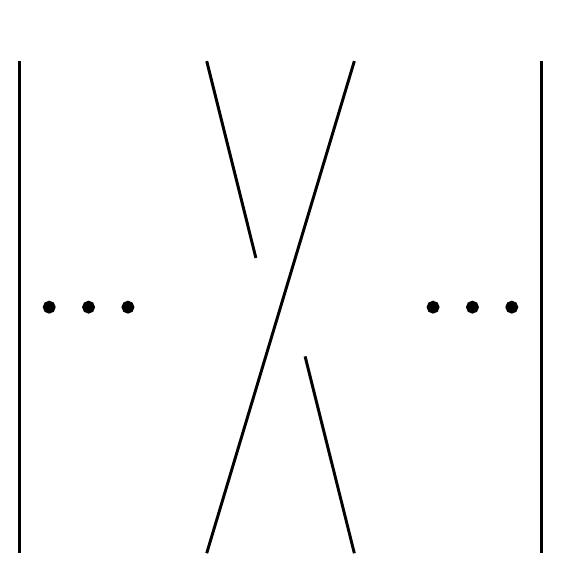} 
 \put(-38,45){\fontsize{10}{11}$i$}
 \put(-28,45){\fontsize{10}{11}$i+1$}\end{picture}
  \hspace{1.7cm}
\raisebox{0.7cm}{$\sigma_i^{-1}=$}\,\, 
\begin{picture}(50,15)\reflectbox{\includegraphics[height=1.7cm]{GenBraid.pdf}}
\put(-38,45){\fontsize{10}{11}$i$}
\put(-28,45){\fontsize{10}{11}$i+1$}\end{picture} 
\hspace{1cm}
\raisebox{0.7cm}{$\tau_i=$}\,\, 
\begin{picture}(50,15)\includegraphics[height=1.7cm]{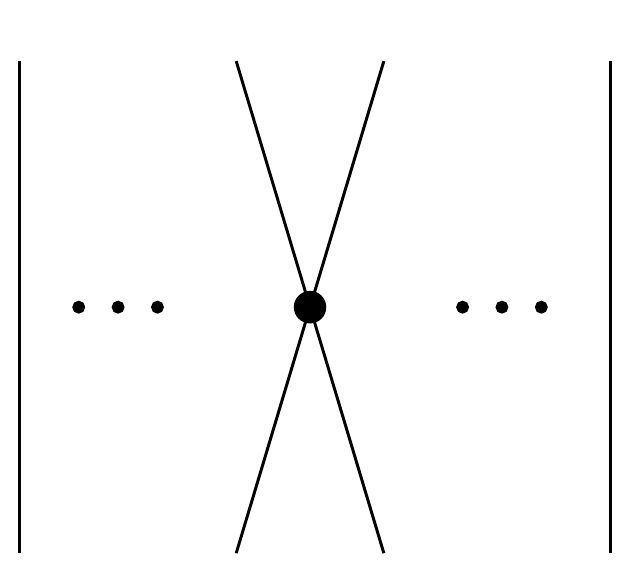}
\put(-38,45){\fontsize{10}{11}$i$}
\put(-28,45){\fontsize{10}{11}$i+1$}\end{picture}$$
and relations:
\begin{itemize}
\item $g_ih_j=h_jg_i$ where $|i-j|>1$ and $g_i,h_i\in \{\sigma_i,\sigma_i^{-1},\tau_i\}$.

\item $\sigma_i^{-1}\sigma_i=1_n=\sigma_i\sigma_i^{-1}\,\, (R2)$  
 \item  $\sigma_i\sigma_j\sigma_i=\sigma_j\sigma_i\sigma_j$, for $|i-j|=1\,\, (R3)$
\item $\tau_i\sigma_j\sigma_i=\sigma_j\sigma_i\tau_j$, for $|i-j|=1\,\, (R4)$ 
\item  $\sigma_i\tau_i=\tau_i\sigma_i \,\,(R5)$
\end{itemize}
Below we depict the last two relations corresponding to the extended Reidemeister moves of type 4 and type 5:
\[ \raisebox{13pt}{\includegraphics[height=1.9cm]{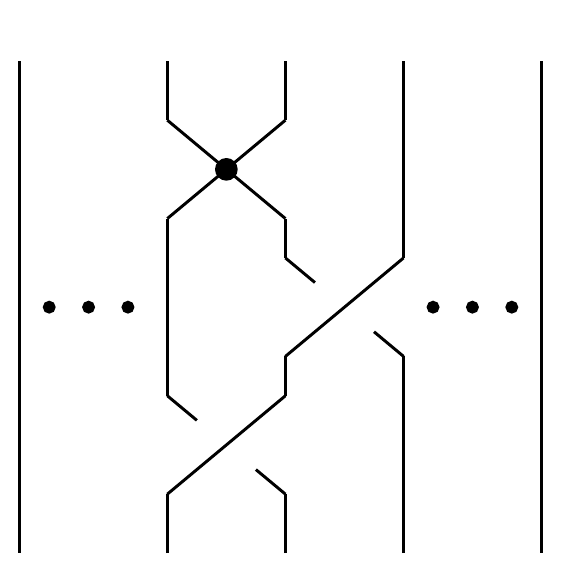}}\hspace{0.2cm} \stackrel{R4}{\raisebox{35pt}{=}}\hspace{0.2cm} \raisebox{13pt}{\includegraphics[height=1.9cm]{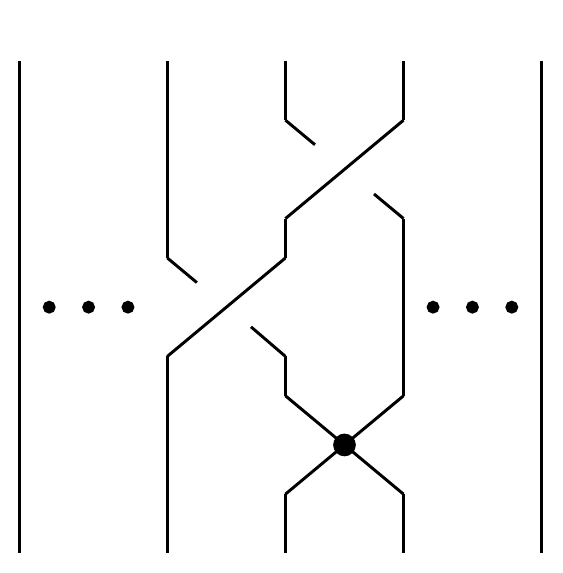}}\hspace{2cm}
\raisebox{13pt}{\includegraphics[height=1.9cm]{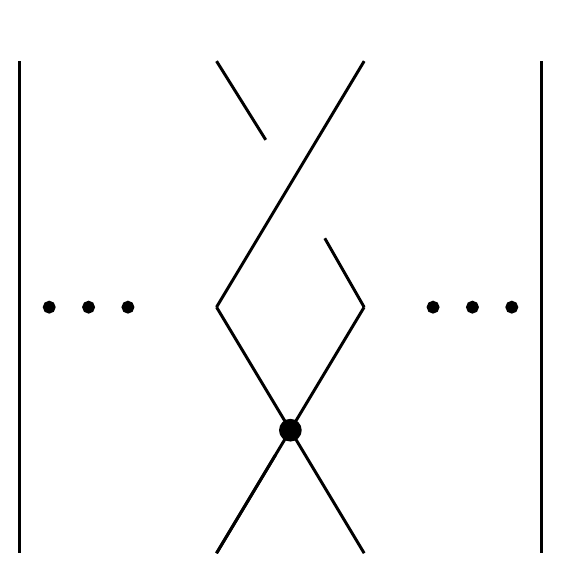}}\hspace{0.2cm} \stackrel{R5}{\raisebox{35pt}{=}}\hspace{0.2cm} \raisebox{13pt}{\includegraphics[height=1.9cm]{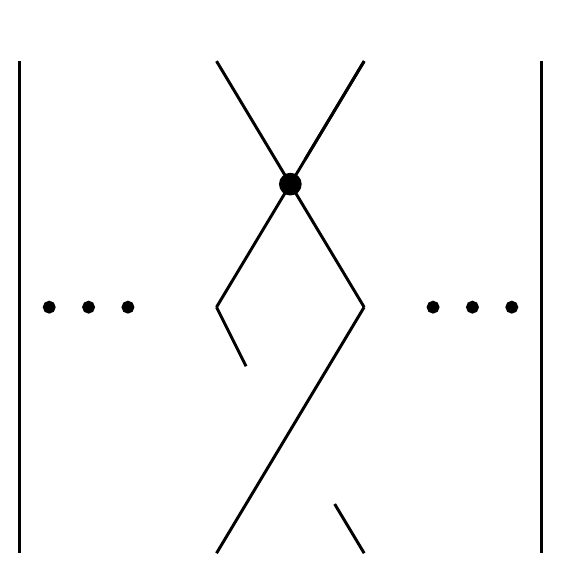}} \]
We orient the singular braids so that all strands are oriented downward. Next, we employ the matrices $R_n, \overline{R}_n$ and $Q_n$ to define, for every $n\in \mathbb{N}$, $n\geq 2$, a homomorphism $\rho_n$ from $SB_k$ into a matrix algebra over $\Z[q, q^{-1}]$, given by:

\[\sigma_1 \mapsto R_n \otimes I \otimes \dots \otimes I\hspace{2cm} \sigma_1^{-1} \mapsto \overline{R}_n \otimes I \otimes \dots \otimes I\  \]
\[\sigma_2 \mapsto I \otimes R_n \otimes \dots \otimes I  \hspace{2cm} \sigma_2^{-1} \mapsto I \otimes \overline{R}_n \otimes \dots \otimes I    \]
\[ \dots \]
\[ \sigma_{k-1} \mapsto I \otimes I \otimes \dots \otimes R_n \hspace{1.8cm} \sigma_{k-1}^{-1} \mapsto I \otimes I \otimes I \dots \otimes \overline{R}_n      \]

\[\tau_1 \mapsto Q_n \otimes I \otimes \dots \otimes I \]
\[\tau_2 \mapsto I \otimes Q_n \otimes \dots \otimes I \]
\[ \dots \]
\[ \tau_{k-1} \mapsto I \otimes I \otimes \dots \otimes Q_n,\]
Here, $\otimes$ is the Kronecker delta tensor product of matrices. Recall that if $A$ is an $m\times n$ matrix and $B$ is a $p \times q$ matrix, then their Kronecker product $A \otimes B$ is the $mp \times nq$ block matrix given below: 
\[ A \otimes B = \left [\begin{array} {ccc}
a_{11}B & \dots & a_{1n}B\\
\vdots & \ddots & \vdots \\
a_{m1}B & \dots & a_{mn}B
 \end{array} \right ].  \]

Notice that for a $k$-stranded singular braid $\beta$, the associated square matrix $\rho_n(\beta)$ with entries in $\Z[q, q^{-1}]$ has size $n^k \times n^k$.
Since the polynomial $\brak{G}$ is a regular isotopy invariant for singular links, it implies that the mapping $\rho_n$ preserves the last four singular braid monoid relations. A close look also reveals that for $|i-j|>1$, $\rho_n(g_ih_j) = \rho_n(h_jg_i)$, where $g_i,h_i\in \{\sigma_i,\sigma_i^{-1},\tau_i\}$. This equality holds since the resulting matrices (on both sides of the equality), written as a Kronecker delta tensor product of matrices, will contain the same matrices ($R_n, \overline{R}_n$ or $Q_n$) on the $i$th and $j$th components, respectively, and the $n\times n$ identity matrix on the other components of the tensor product. Therefore, the following statement holds.

\begin{theorem}
For every $n\in \mathbb{Z}$, $n\geq 2$, the mapping $\rho_n$ is a representation of the singular braid monoid $SB_k$ into a matrix algebra over $\Z[q, q^{-1}]$. 
\end{theorem}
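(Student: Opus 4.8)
The plan is to verify that $\rho_n$ respects each of the defining relations of $SB_k$, since a monoid homomorphism is completely determined by checking that the images of the generators satisfy the relations. First I would organize the relations into two groups: the far-commutativity relations $g_ih_j = h_jg_i$ for $|i-j|>1$, and the local relations involving strands at positions $i, i+1$ (namely $R2$, $R3$, $R4$, $R5$). For the far-commutativity relations, the argument is purely algebraic and is already sketched in the text: writing each image as a Kronecker product, the factors $R_n, \overline{R}_n$, or $Q_n$ sit in disjoint tensor slots $i$ and $j$, so the two products agree factor-by-factor because $(A\otimes I)(I\otimes B) = A\otimes B = (I\otimes B)(A\otimes I)$ when the nontrivial factors occupy different components.

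For the local relations, I would invoke the regular-isotopy invariance of $\brak{G}$ together with the matrix identities already established in the excerpt. The key observation is that each braid relation corresponds to a tangle identity that our state model is known to respect. Concretely: relation $R2$ ($\sigma_i^{-1}\sigma_i = 1$) is exactly the channel unitarity $\sum_{i,j} R^{ab}_{ij}\overline{R}^{ij}_{cd} = \delta^a_c\delta^b_d$ established in Section~\ref{sec:YB-sl(n)poly}, which says $R_n\overline{R}_n = I$ in the appropriate tensor slot; relation $R3$ ($\sigma_i\sigma_j\sigma_i = \sigma_j\sigma_i\sigma_j$ for $|i-j|=1$) is precisely the Yang-Baxter equation satisfied by $R_n$; relation $R5$ ($\sigma_i\tau_i = \tau_i\sigma_i$) follows from the commutation identities $R_nQ_n = Q_nR_n$ and $\overline{R}_nQ_n = Q_n\overline{R}_n$, which the text records as consequences of $R5$-invariance (and which are moreover sharpened to $R_nQ_n = q\cdot Q_n$, etc.); and relation $R4$ ($\tau_i\sigma_j\sigma_i = \sigma_j\sigma_i\tau_j$ for $|i-j|=1$) is the mixed Yang-Baxter-type identity whose diagrammatic version is exactly the $R4$ move, already shown to hold for $\brak{G}$.

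The main technical point to make rigorous is the passage from the diagrammatic invariance statements to honest matrix equations in the tensor-product algebra. For relations involving adjacent strands $i, i+1$, the relevant factors act on consecutive tensor slots, and I would need to phrase each identity as an equality of operators on $(\mathbb{Z}[q,q^{-1}]^n)^{\otimes k}$, using the standard convention that $R_n$ placed in slot $i$ means $I^{\otimes(i-1)}\otimes R_n\otimes I^{\otimes(k-i-1)}$. The Yang-Baxter and mixed relations require care because $\sigma_i\sigma_{i+1}\sigma_i$ involves three strands, so the corresponding operator identity lives in the $i, i+1, i+2$ block; here I would cite the braided-tensor form of the YBE and its $Q_n$-analogue directly, noting that these three-strand identities hold in the three-fold tensor factor and are then padded by identities elsewhere.

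I expect the main obstacle to be purely bookkeeping rather than conceptual: carefully tracking which tensor slot each generator occupies and confirming that the relation $R4$, $\tau_i\sigma_j\sigma_i = \sigma_j\sigma_i\tau_j$ with $|i-j|=1$, translates into the correct three-slot matrix identity involving two copies of $R_n$ and one copy of $Q_n$. Since the excerpt has already verified the diagrammatic invariance of $\brak{G}$ under all of $R2$ through $R5$, and since the state model assigns to each braid word exactly the matrix product $\rho_n$ prescribes, each braid relation holds under $\rho_n$; together with the far-commutativity case this shows $\rho_n$ is a well-defined monoid homomorphism, i.e.\ a representation of $SB_k$ over $\mathbb{Z}[q,q^{-1}]$.
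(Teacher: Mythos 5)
Your proposal is correct and follows essentially the same route as the paper: the far-commutativity relations are handled by the disjoint-tensor-slot argument for Kronecker products, and the local relations $R2$--$R5$ are deduced from the already-established facts that $\brak{G}$ is a regular isotopy invariant (channel unitarity, the Yang--Baxter equation, the mixed $R4$ identity, and $R_nQ_n=Q_nR_n$). Your write-up is in fact somewhat more explicit than the paper's, which compresses the local relations into a single sentence appealing to the invariance of $\brak{G}$; the content is the same.
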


\section{Yet another look at $sl(n)$ invariants}\label{sec:MOYrelations}

In this section we show that the polynomial invariant for singular links constructed in Section~\ref{sec:inv-singlinks} can be used to obtain a version of the Murakami-Ohtsuki-Yamada (MOY) state model for the $sl(n)$ polynomial (for details on this state model we refer the reader to~\cite{MOY}). In other words, by extending the Yang-Baxter state model for the $sl(n)$-link invariant to singular links we obtain a state model for the $sl(n)$ polynomial, defined via a graphical calculus of planar 4-valent graphs.

We start off with a handy statement, which will be used to derive a set of skein relations involving only planar graphs. 

\begin{proposition}\label{prop:useful}
The following skein relations hold:             
\[ \left<  \raisebox{-12pt}{ \includegraphics[height=.4in]{flatcross}}\,\right>  =   \left< \raisebox{-12pt}{ \includegraphics[height=.4in]{pcross}}\,\right>  +q^{-1}  \left< \raisebox{-12pt}{  \includegraphics[height=.4in]{2arcs} }\, \right > =   \left< \raisebox{-12pt}{ \includegraphics[height=.4in]{ncross}}\,\right>  +q  \left< \raisebox{-12pt}{  \includegraphics[height=.4in]{2arcs} }\, \right > .\]
\end{proposition}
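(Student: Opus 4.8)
The plan is to prove the two claimed identities directly from the singular crossing decomposition in Figure~\ref{fig:sing-crossing}, combined with the classical crossing skein relations in Figure~\ref{fig:crossings}. The singular crossing decomposition expresses $\langle\text{flatcross}\rangle$ as a combination of the four splice/flat states with coefficients $q$, $q^{-1}$, $q+q^{-1}$, and $1$ (attached to the $[a<b]$, $[a>b]$, $[a=b]$, and $[a\neq b]$ terms respectively). The positive crossing decomposition gives $\langle\text{pcross}\rangle = (q-q^{-1})\langle\text{ltsplit}\rangle + q\langle\text{eqsplit}\rangle + \langle\text{flat}\rangle$. The strategy is to subtract $\langle\text{pcross}\rangle$ from $\langle\text{flatcross}\rangle$ at the level of these four basis states and identify the difference as $q^{-1}\langle\text{2arcs}\rangle$.

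First I would write both sides in terms of the decorated states. Taking the difference termwise, the $[a<b]$ coefficient becomes $q-(q-q^{-1}) = q^{-1}$, the $[a=b]$ coefficient becomes $(q+q^{-1})-q = q^{-1}$, the $[a>b]$ coefficient is $q^{-1}$ (since the positive crossing contributes nothing there), and the flat ($[a\neq b]$) terms cancel. Thus
\[
\left\langle\raisebox{-12pt}{\includegraphics[height=.4in]{flatcross}}\right\rangle - \left\langle\raisebox{-12pt}{\includegraphics[height=.4in]{pcross}}\right\rangle = q^{-1}\left[\left\langle\raisebox{-12pt}{\includegraphics[height=.4in]{ltsplit}}\right\rangle + \left\langle\raisebox{-12pt}{\includegraphics[height=.4in]{gtsplit}}\right\rangle + \left\langle\raisebox{-12pt}{\includegraphics[height=.4in]{eqsplit}}\right\rangle\right].
\]
The bracketed sum is exactly the expression that appears in the computation at the end of Section~\ref{sec:YB-sl(n)poly}, where it was shown (using $a+b=c+d$ and the trichotomy $a<b$, $a=b$, $a>b$) to equal $\langle\text{2arcs}\rangle$. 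Invoking that identity yields the first equality, $\langle\text{flatcross}\rangle = \langle\text{pcross}\rangle + q^{-1}\langle\text{2arcs}\rangle$. The second equality follows by the symmetric computation: subtracting the negative crossing decomposition $\langle\text{ncross}\rangle = (q^{-1}-q)\langle\text{gtsplit}\rangle + q^{-1}\langle\text{eqsplit}\rangle + \langle\text{flat}\rangle$ from the singular decomposition leaves coefficients $q$ on each of the three non-flat splice states, so the difference is $q\langle\text{2arcs}\rangle$.

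I expect no serious obstacle here, since this is essentially bookkeeping with the coefficient tables already established. The only point requiring care is the step where the three splices sum to $\langle\text{2arcs}\rangle$: one must remember that this holds only because each term's evaluation is nonzero precisely when $c=a$ and $d=b$ (the identity-permutation channel), so that $\langle\text{ltsplit}\rangle + \langle\text{gtsplit}\rangle + \langle\text{eqsplit}\rangle$ collapses onto a single $\langle\text{2arcs}\rangle$ rather than producing extra flat-state contributions. This is exactly the reasoning justified in the displayed computation following the YBE discussion, so I would cite that step rather than re-derive it. As a consistency check, the two equalities together immediately recover the skein relation $\langle\text{pcross}\rangle - \langle\text{ncross}\rangle = (q-q^{-1})\langle\text{2arcs}\rangle$ from the Theorem, confirming the signs and coefficients.
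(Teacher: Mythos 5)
Your proof is correct and is essentially the paper's own argument: both expand the singular and classical crossings into the four decorated basis states of Figures~\ref{fig:crossings} and~\ref{fig:sing-crossing} and invoke the collapse identity $\brak{\text{ltsplit}}+\brak{\text{gtsplit}}+\brak{\text{eqsplit}}=\brak{\text{2arcs}}$ justified at the end of Section~\ref{sec:YB-sl(n)poly}, merely organized as a termwise subtraction rather than an addition. Your direct symmetric computation for the second equality is exactly the ``verified similarly'' alternative the paper itself mentions before opting to instead combine the first equality with the exchange skein relation.
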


\begin{proof} The statement follows from the skein relations in Figure~\ref{fig:crossings} and Figure~\ref{fig:sing-crossing}, as we show below.

\begin{eqnarray*}
 \left< \raisebox{-12pt}{ \includegraphics[height=.4in]{pcross}}\,\right>& +& q^{-1}  \left< \raisebox{-12pt}{  \includegraphics[height=.4in]{2arcs} }\, \right > \\
 &=&(q-q^{-1})\left< \raisebox{-13pt}{ \includegraphics[height=.4in]{ltsplit}}\, \right>
      +q \left<  \raisebox{-13pt}{ \includegraphics[height=.4in]{eqsplit}}\, \right>
      + \left<  \raisebox{-13pt}{  \includegraphics[height=.4in]{flat}}
              \put(-18, 7){\fontsize{10}{11}$ \neq$}\, \right> \\ \\ 
 && + q^{-1}\left < \raisebox{-13pt}{ \includegraphics[height=.4in]{ltsplit}}\, \right> + q^{-1}\left < \raisebox{-13pt}{ \includegraphics[height=.4in]{gtsplit}}\, \right> + q^{-1}\left < \raisebox{-13pt}{ \includegraphics[height=.4in]{eqsplit}}\, \right> \\
 &=& q \left< \raisebox{-13pt}{ \includegraphics[height=.4in]{ltsplit}}\, \right> + q^{-1} \left< \raisebox{-13pt}{ \includegraphics[height=.4in]{gtsplit}}\, \right> +(q+q^{-1}) \left< \raisebox{-13pt}{ \includegraphics[height=.4in]{eqsplit}}\, \right> + \left<  \raisebox{-13pt}{  \includegraphics[height=.4in]{flat}}  \put(-18, 7){\fontsize{10}{11}$ \neq$}\, \right> \\
 &=& \left< \raisebox{-13pt}{ \includegraphics[height=.4in]{flatcross}}\, \right>.
\end{eqnarray*}
The second equality can be verified similarly, or by using the first equality together with the exchange skein relation defining the $sl(n)$-link invariant, as we explain below.
\begin{eqnarray*}
\left< \raisebox{-13pt}{ \includegraphics[height=.4in]{flatcross}}\, \right> &=&  \left< \raisebox{-12pt}{ \includegraphics[height=.4in]{pcross}}\,\right>  + q^{-1}  \left< \raisebox{-12pt}{  \includegraphics[height=.4in]{2arcs} }\, \right >\\
&=& \left[ \left< \raisebox{-12pt}{ \includegraphics[height=.4in]{ncross}}\,\right> + (q-q^{-1}) \left< \raisebox{-12pt}{ \includegraphics[height=.4in]{2arcs}}\,\right> \right] + q^{-1}  \left< \raisebox{-12pt}{  \includegraphics[height=.4in]{2arcs} }\, \right >\\
&=&  \left< \raisebox{-12pt}{ \includegraphics[height=.4in]{ncross}}\,\right> +q \left< \raisebox{-12pt}{  \includegraphics[height=.4in]{2arcs} }\, \right >.
\end{eqnarray*}
\end{proof}

\begin{proposition}\label{prop:graph skein rel}
The following graph skein relations hold:

\begin{eqnarray}\label{eq:skein-reid1}
 \left< \raisebox{-13pt}{ \includegraphics[height=.4in]{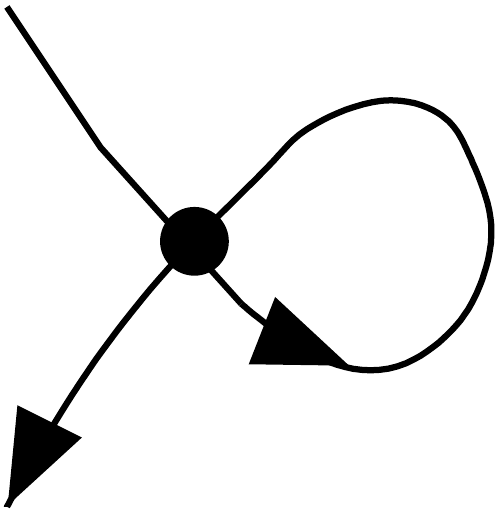}}\,\right>  =  [n+1]   \left< \raisebox{-13pt}{ \includegraphics[height=.4in]{arc}}\,\right>
 \end{eqnarray}
 
 \begin{eqnarray} \label{eq:skein-reid2a}
  \left< \raisebox{-13pt}{ \includegraphics[height=.4in]{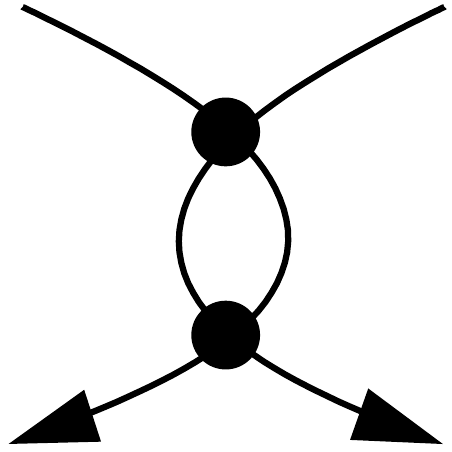}}\,\right>  = [2]  \left< \raisebox{-13pt}{  \includegraphics[height=.4in]{flatcross} } \right> 
 \end{eqnarray}

\begin{eqnarray}\label{eq:skein-reid2b}
 \left< \raisebox{-13pt}{ \includegraphics[height=.4in]{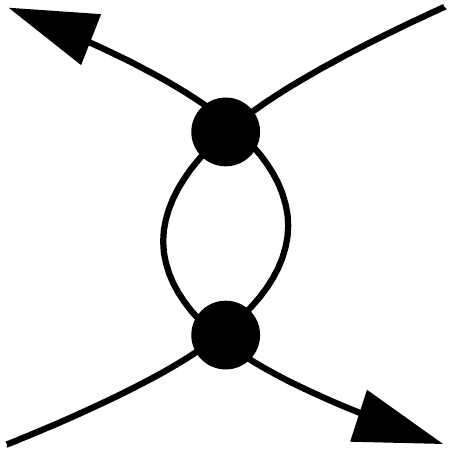}}\,\right>  =  \left< \raisebox{-13pt}{ \includegraphics[height=.4in]{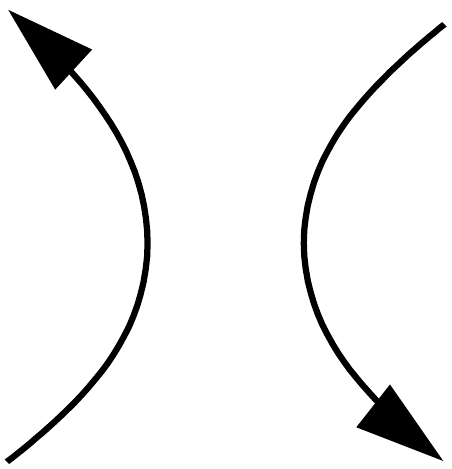}}\,\right> + [n +2]  \left<\, \reflectbox{\raisebox{-10pt}{ \includegraphics[height=.4in, angle=90]{2arcs-op}}}\,\right> 
 \end{eqnarray}

\begin{eqnarray}\label{eq:skein-reid3a}
 \left< \raisebox{-13pt}{ \includegraphics[height=.4in]{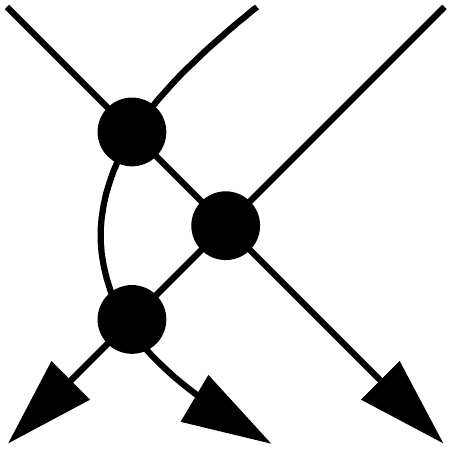}}\,\right>  + \left< \raisebox{-13pt}{ \includegraphics[height=.4in]{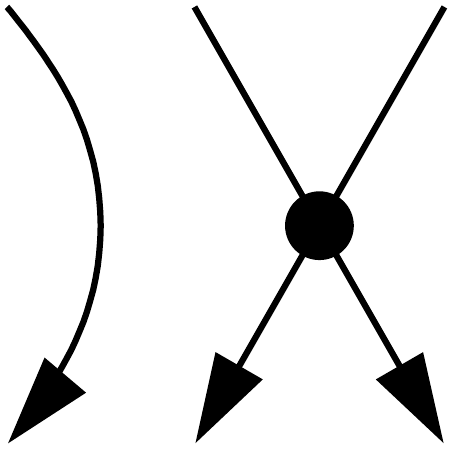}}\,\right>  =  \left<\, \reflectbox{ \raisebox{-13pt}{ \includegraphics[height=.4in]{flatreid3a}}} \hspace{-0.05in}\right>  + \left<\, \reflectbox{ \raisebox{-13pt}{ \includegraphics[height=.4in]{vertex-arc}}}  \hspace{-0.05in}\right>   
 \end{eqnarray}

\begin{eqnarray} \label{eq:skein-reid3b}
  \left< \raisebox{-13pt}{ \includegraphics[height=.4in]{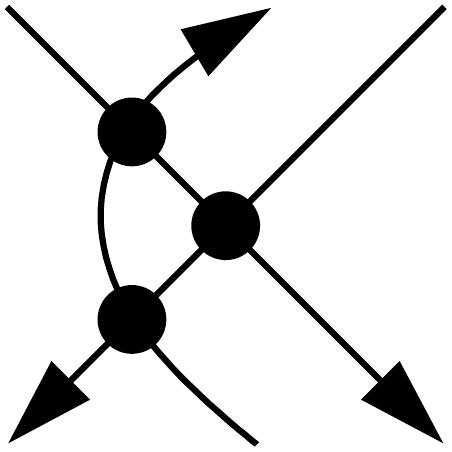}}\,\right>  -[n+3] \left< \raisebox{-13pt}{ \includegraphics[height=.4in]{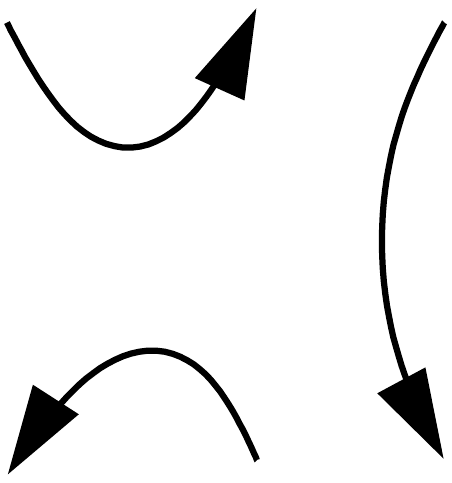}}\,\right>  =  \left<\, \reflectbox{ \raisebox{-13pt}{ \includegraphics[height=.4in]{flatreid3b}}} \hspace{-0.05in}\right>  -[n+3] \left<\, \reflectbox{ \raisebox{-13pt}{ \includegraphics[height=.4in]{skein-last}}}  \hspace{-0.05in}\right>.  
\end{eqnarray}

\end{proposition}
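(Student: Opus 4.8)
The plan is to prove all five identities by a single mechanism: use Proposition~\ref{prop:useful} to rewrite every flat (singular) crossing occurring in the diagrams of Proposition~\ref{prop:graph skein rel} as a $\Z[q,q^{-1}]$-combination of an ordinary crossing and the oriented smoothing, namely $\brak{\text{flatcross}} = \brak{\text{pcross}} + q^{-1}\brak{\text{2arcs}} = \brak{\text{ncross}} + q\brak{\text{2arcs}}$, choosing whichever of the two expansions is more convenient. This turns each graph bracket into a combination of brackets of honest classical tangles, which I then evaluate using the regular-isotopy invariance of $\brak{\,\cdot\,}$ under the Reidemeister moves of type $2$ and type $3$, the curl relations $\brak{\text{poskink}}=q^{n}\brak{\text{arc}}$ and $\brak{\text{negkink}}=q^{-n}\brak{\text{arc}}$, the loop value $[n]$, and the exchange relation $\brak{\text{pcross}}-\brak{\text{ncross}}=(q-q^{-1})\brak{\text{2arcs}}$. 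The quantum-integer coefficients then fall out of elementary identities, most importantly $q^{n}+q^{-1}[n]=q^{-n}+q[n]=[n+1]$ and $q+q^{-1}=[2]$.

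I would begin with the two shortest relations. For \eqref{eq:skein-reid1}, expand the lone singular crossing: the crossing term is a positive curl and contributes $q^{n}\brak{\text{arc}}$, while the smoothing term closes one strand into a circle and contributes $q^{-1}[n]\brak{\text{arc}}$; summing gives $(q^{n}+q^{-1}[n])\brak{\text{arc}}=[n+1]\brak{\text{arc}}$. For \eqref{eq:skein-reid2a}, expand only the upper singular crossing via the $\brak{\text{pcross}}$-form; its smoothing term reproduces the single flat crossing, whereas the positive-crossing-over-singular-crossing term equals $q\brak{\text{flatcross}}$ by Proposition~\ref{prop:poly-r6} (equivalently, by the matrix identity $R_{n}Q_{n}=qQ_{n}$). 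The two contributions add to $(q+q^{-1})\brak{\text{flatcross}}=[2]\brak{\text{flatcross}}$; as a sanity check, \eqref{eq:skein-reid2a} is precisely the matrix identity $Q_{n}^{2}=[2]\,Q_{n}$, which one can also verify directly.

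For \eqref{eq:skein-reid2b} I would expand both singular crossings, taking the cross-channel orientation into account. The term in which both crossings are smoothed produces a free circle together with a turn-back, contributing the $[n]$ portion of the coefficient; the remaining terms, after applying cross-channel unitarity and the curl relations, supply the turn-backs with weights whose regrouping yields $\brak{\text{2arcs-op}}+[n+2]\brak{\text{reflected 2arcs-op}}$, the needed combination arising from $[n+2]=[n]+q^{\,n+1}+q^{-n-1}$. The two Reidemeister-$3$ type relations \eqref{eq:skein-reid3a} and \eqref{eq:skein-reid3b} are then handled in the same spirit but require expanding all three singular crossings. The terms in which all three remain genuine crossings are matched across the two sides of each identity by the classical Reidemeister-$3$ invariance of $\brak{\,\cdot\,}$, and hence cancel; what survives are the ``degenerate'' terms in which at least one smoothing has occurred, and the added terms on either side --- $\brak{\text{vertex-arc}}$ and its reflection in \eqref{eq:skein-reid3a}, and $[n+3]\brak{\text{skein-last}}$ and its reflection in \eqref{eq:skein-reid3b} --- are exactly the corrections that equate these leftover pieces.

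The main obstacle will be the bookkeeping for \eqref{eq:skein-reid3a} and \eqref{eq:skein-reid3b}: each side expands into $2^{3}$ classical brackets, so one must track orientations and the creation of closed loops carefully, group the smoothed terms correctly, and verify that their difference across the two sides is precisely the stated correction term. The saving grace is that Reidemeister-$3$ invariance disposes of the top-order all-crossing terms for free, reducing the problem to a finite, if tedious, comparison of lower-order diagrams governed by the curl value $q^{\pm n}$, the loop value $[n]$, and the quantum-integer identities above; assembling these is routine once the degenerate terms are organized.
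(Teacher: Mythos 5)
Your proposal is correct, and for \eqref{eq:skein-reid1}, \eqref{eq:skein-reid2a}, and \eqref{eq:skein-reid2b} it essentially coincides with the paper's proof: the paper likewise expands the flat vertices via Proposition~\ref{prop:useful} and evaluates using curls, the loop value, and R2-invariance (your shortcut through Proposition~\ref{prop:poly-r6} in \eqref{eq:skein-reid2a}, and your grouping $[n+2]=[n]+q^{n+1}+q^{-n-1}$ in \eqref{eq:skein-reid2b}, are harmless variants of the paper's sequential computation, which uses $[n+2]=q^{n+1}+q^{-1}[n+1]$ instead). The genuine divergence is in \eqref{eq:skein-reid3a} and \eqref{eq:skein-reid3b}. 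The paper first proves the auxiliary identities \eqref{eq:reid3a-mixed}, asserting that diagrams with one flat vertex and two classical crossings are flip-invariant, and then expands only two of the three flat vertices, choosing the positive-crossing form for one vertex and the negative-crossing form for the other, with the opposite choices on the reflected side; this asymmetry makes every cross term on one side equal a cross term on the other by planar isotopy alone, and the single unmatched pair is precisely the stated correction. You instead expand all three vertices uniformly, so R3-invariance cancels only the top (all-crossing) terms, and the surviving degenerate terms do \emph{not} then cancel pairwise: in braid shorthand for \eqref{eq:skein-reid3a}, the difference of the two sides reduces to $q^{-1}\left(\brak{\sigma_1^2}-\brak{\sigma_2^2}\right)+q^{-2}\left(\brak{\sigma_1}-\brak{\sigma_2}\right)$, and one still needs the exchange relation, $\brak{\sigma_i^2}=\brak{1}+(q-q^{-1})\brak{\sigma_i}$, to collapse this to $\brak{\sigma_1}-\brak{\sigma_2}=\brak{\tau_1}-\brak{\tau_2}$, which is exactly the vertex-arc correction; the analogue for \eqref{eq:skein-reid3b} additionally uses curls and $q^{n+2}+q^{-n-2}+[n+1]=[n+3]$. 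That reduction is the one step your sketch asserts rather than performs, but it does go through. The trade-off is clear: your route avoids the auxiliary lemma at the cost of $2^3$ terms per side plus the exchange-relation step, while the paper's route requires the lemma and the careful asymmetric expansion choices, but then every matching is a single planar isotopy or Reidemeister move.
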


\begin{proof}
We will make use of the skein relations in Proposition~\ref{prop:useful}. We start with the first skein relation:
\begin{eqnarray*}
 \left< \raisebox{-13pt}{ \includegraphics[height=.4in]{flatkink}}\,\right>  &=&  \left< \raisebox{-13pt}{ \includegraphics[height=.4in]{poskink}}\,\right> + q^{-1}\left< \raisebox{-13pt}{ \includegraphics[height=.4in]{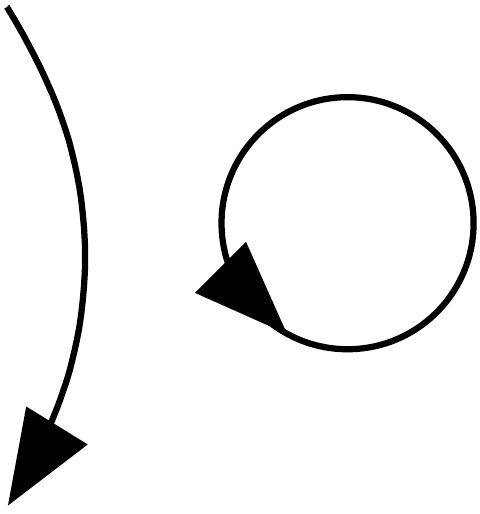}}\,\right>\\
 &=& q^n  \left< \raisebox{-13pt}{ \includegraphics[height=.4in]{arc}}\,\right>  + q^{-1} [n]  \left< \raisebox{-13pt}{ \includegraphics[height=.4in]{arc}}\,\right> \\
 &=& [n+1]   \left< \raisebox{-13pt}{ \includegraphics[height=.4in]{arc}}\,\right>.
 \end{eqnarray*}
 The skein relation in Equation~\eqref{eq:skein-reid2a} is verified as follows:
 
 \begin{eqnarray*}
  \left< \raisebox{-13pt}{ \includegraphics[height=.4in]{flatreid2a}}\,\right>  &=& \left< \raisebox{-13pt}{ \includegraphics[height=.4in]{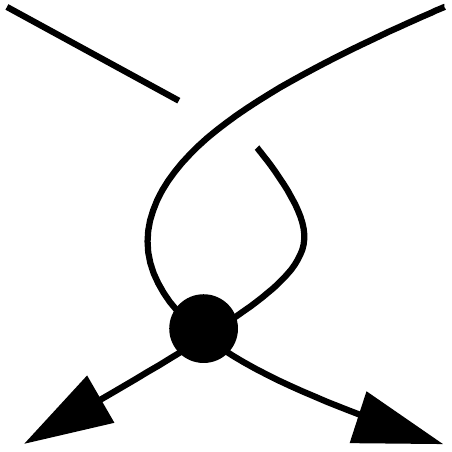}}\,\right>  +q^{-1}  \left< \raisebox{-13pt}{ \includegraphics[height=.4in]{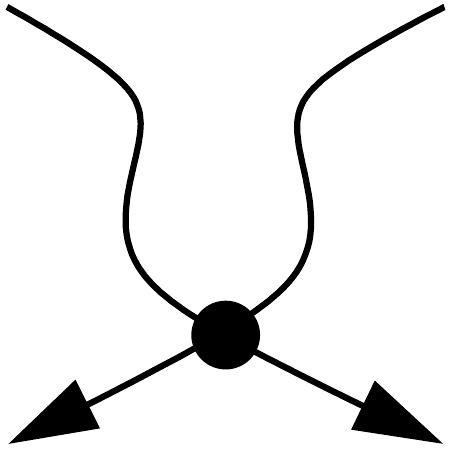}}\,\right>\\
&=& \left< \raisebox{-13pt}{ \includegraphics[height=.4in]{reid2a}}\,\right> +q  \left< \raisebox{-13pt}{ \includegraphics[height=.4in]{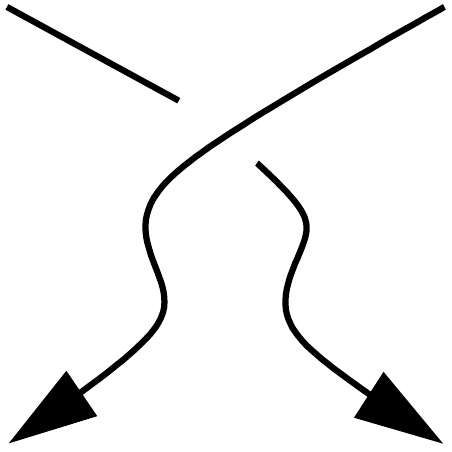}}\,\right>  +q^{-1}  \left< \raisebox{-13pt}{ \includegraphics[height=.4in]{pfskein-2}}\,\right>\\  
&=& \left< \raisebox{-13pt}{ \includegraphics[height=.4in]{2arcs}}\,\right> +q  \left< \raisebox{-13pt}{ \includegraphics[height=.4in]{pcross}}\,\right>  +q^{-1}  \left< \raisebox{-13pt}{ \includegraphics[height=.4in]{flatcross}}\,\right>\\  
&=&q\left( q^{-1} \left< \raisebox{-13pt}{ \includegraphics[height=.4in]{2arcs}}\,\right> +  \left< \raisebox{-13pt}{ \includegraphics[height=.4in]{pcross}}\,\right> \right) +q^{-1}  \left< \raisebox{-13pt}{ \includegraphics[height=.4in]{flatcross}}\,\right>\\
&=&q \left< \raisebox{-13pt}{ \includegraphics[height=.4in]{flatcross}}\,\right>+q^{-1}  \left< \raisebox{-13pt}{ \includegraphics[height=.4in]{flatcross}}\,\right>\\  
&=&[2] \left< \raisebox{-13pt}{ \includegraphics[height=.4in]{flatcross}}\,\right>.
  \end{eqnarray*}
  
  We consider now the third skein relation in Equation~\eqref{eq:skein-reid2b}:
   \begin{eqnarray*}
   \left< \raisebox{-13pt}{ \includegraphics[height=.4in]{flatreid2b}}\,\right> &=& \left< \reflectbox{\raisebox{18pt}{ \includegraphics[height=.45in, angle=180]{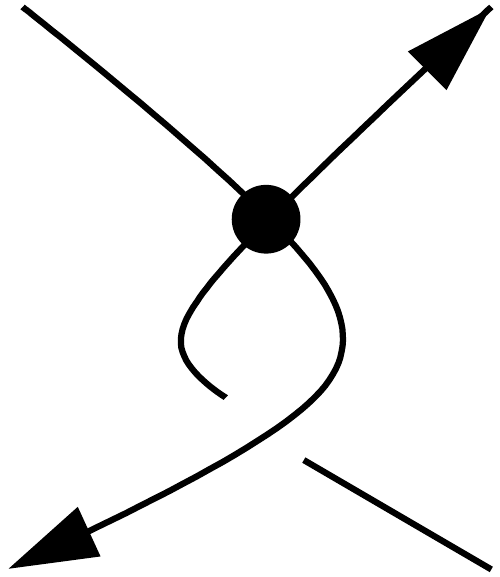}}}\,\right>  + q^{-1} \left< \raisebox{-13pt}{ \includegraphics[height=.35in, angle=90]{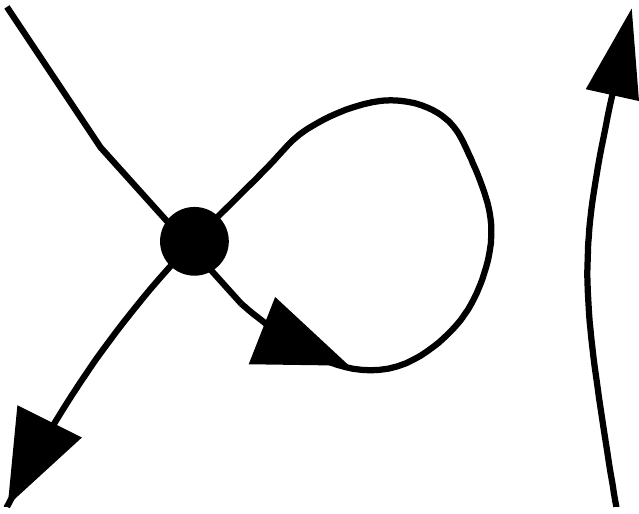}}\,\right>\\  
   &=& \left< \reflectbox{\raisebox{-13pt}{ \includegraphics[height=.4in]{reid2b}}}\,\right>  + q  \left< \raisebox{18pt}{ \includegraphics[height=.35in, angle=270]{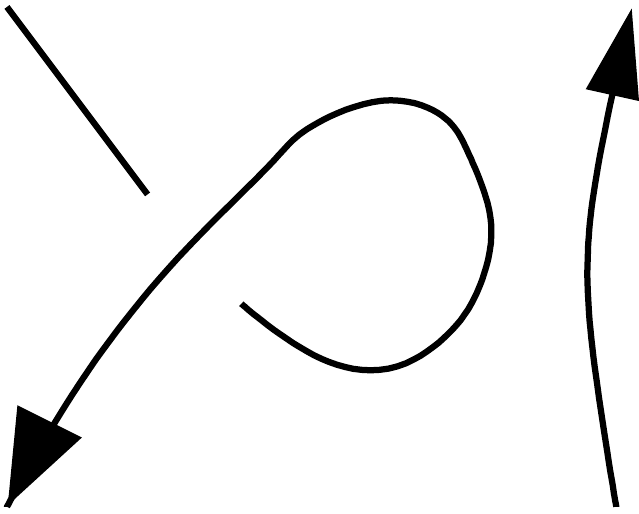}}\,\right> + q^{-1} \left< \raisebox{-13pt}{ \includegraphics[height=.35in, angle=90]{pfskein-4}}\,\right>\\  
   &=&  \left< \raisebox{-13pt}{ \includegraphics[height=.4in]{2arcs-op}}\,\right> + q\cdot q^n  \left<\, \reflectbox{\raisebox{-10pt}{ \includegraphics[height=.4in, angle=90]{2arcs-op}}}\right>  + q^{-1}\cdot [n+1]  \left<\, \reflectbox{\raisebox{-10pt}{ \includegraphics[height=.4in, angle=90]{2arcs-op}}}\right>  \\
   &=& \left< \raisebox{-13pt}{ \includegraphics[height=.4in]{2arcs-op}}\,\right> + [n +2]  \left<\, \reflectbox{\raisebox{-10pt}{ \includegraphics[height=.4in, angle=90]{2arcs-op}}}\right>.  
     \end{eqnarray*}

  In the latter computations we used Equation~\eqref{eq:skein-reid1}, the invariance of the polynomial under the second Reidemeister move, and the behavior of the polynomial under the first Reidemeister move. 
  
We are left with showing the skein relations in Equations~\eqref{eq:skein-reid3a} and~\eqref{eq:skein-reid3b}. For that, we first show that the following identities hold:
\begin{eqnarray}\label{eq:reid3a-mixed}
\hspace{1.5 cm}  \left< \raisebox{-13pt}{ \includegraphics[height=.4in]{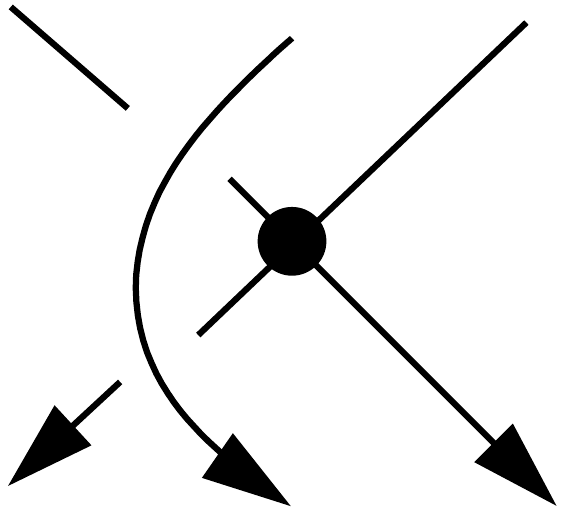}}\right> = \left<\reflectbox{ \raisebox{-13pt}{ \includegraphics[height=.4in]{reid3a-mixed}}} \right> \hspace{0.5cm} \text{and} \hspace{0.5cm}  \left< \raisebox{-13pt}{ \includegraphics[height=.4in]{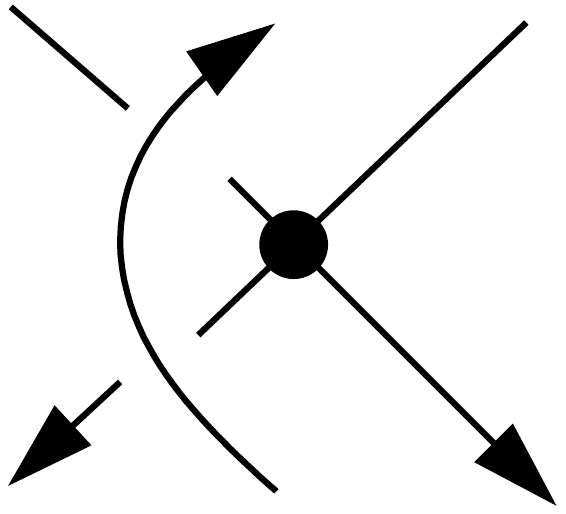}}\right> = \left<\reflectbox{ \raisebox{-13pt}{ \includegraphics[height=.4in]{reid3b-mixed}}} \right>.
 \end{eqnarray}

The first identity in~\eqref{eq:reid3a-mixed} is verified as shown below; the second identity is verified in a similar manner, and thus it is omitted.
  \begin{eqnarray*}
\left< \raisebox{-13pt}{ \includegraphics[height=.4in]{reid3a-mixed}}\right> &=&\left< \raisebox{-13pt}{ \includegraphics[height=.4in]{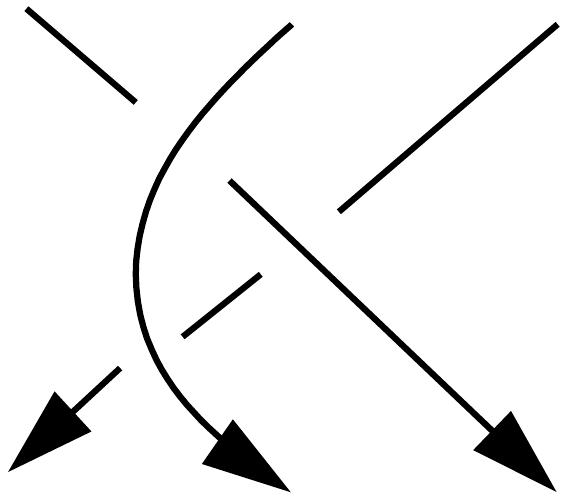}}\right> + q  \left< \raisebox{-13pt}{ \includegraphics[height=.4in]{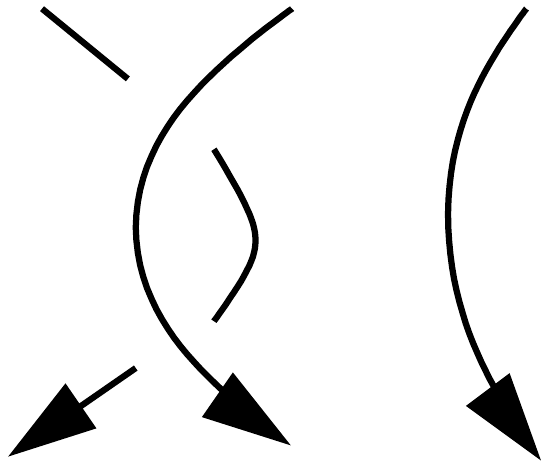}}\right> \\
& \stackrel{R3, R2}{=} & \left< \reflectbox{\raisebox{-13pt}{ \includegraphics[height=.4in]{pfskein-7}}} \right> + q  \left< \reflectbox{\raisebox{-13pt}{ \includegraphics[height=.4in]{pfskein-6}}} \right>\\
&=& \left<\reflectbox{ \raisebox{-13pt}{ \includegraphics[height=.4in]{reid3a-mixed}}} \right>.
 \end{eqnarray*}
Then we have:
 \begin{eqnarray*}
  \left< \raisebox{-13pt}{ \includegraphics[height=.4in]{flatreid3a}}\,\right> &=& \left< \raisebox{-13pt}{ \includegraphics[height=.4in]{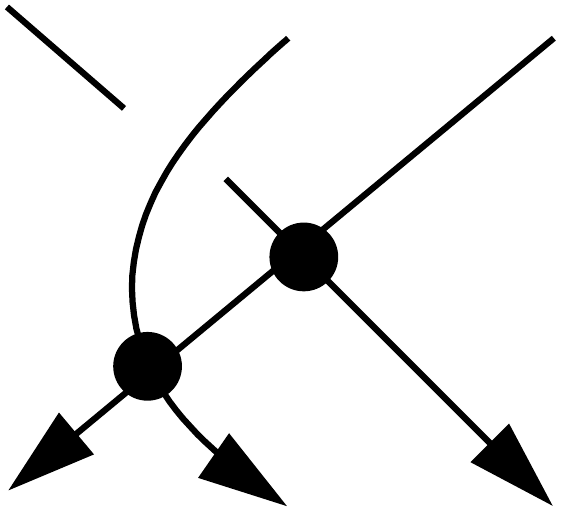}}\right> + q^{-1}  \left< \raisebox{-13pt}{ \includegraphics[height=.4in]{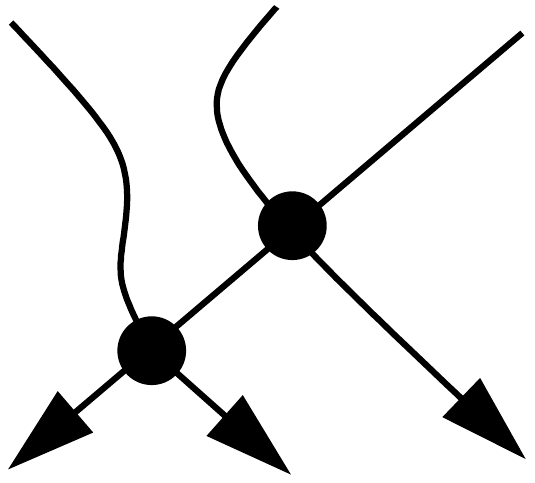}}\right> \\
  &=& \left< \raisebox{-13pt}{ \includegraphics[height=.4in]{reid3a-mixed}}\right> + q  \left< \raisebox{-13pt}{ \includegraphics[height=.4in]{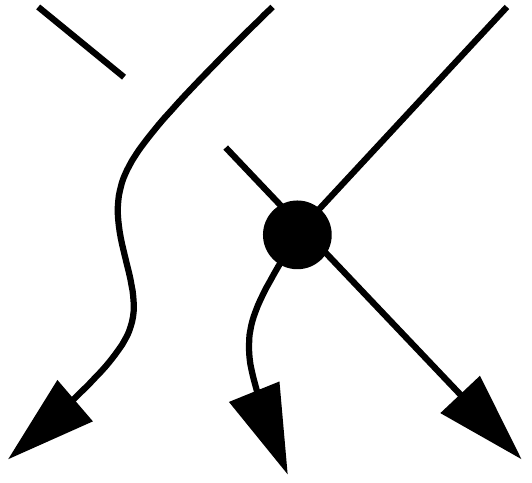}}\right>  +q^{-1}  \left< \raisebox{-13pt}{ \includegraphics[height=.4in]{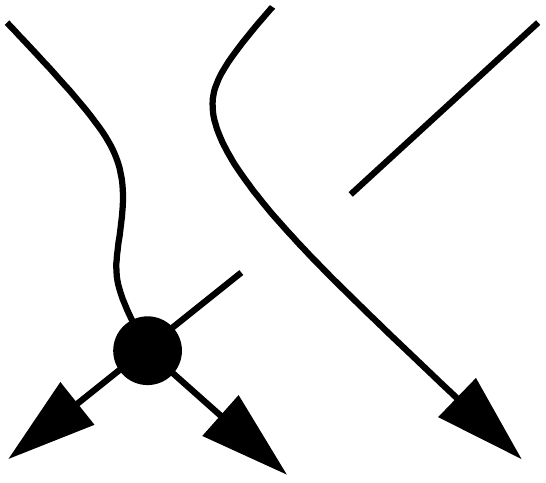}}\right> + \left< \raisebox{-13pt}{ \includegraphics[height=.4in]{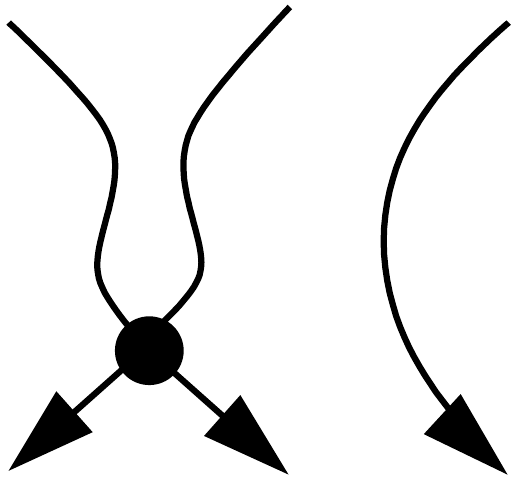}}\right> ,
 \end{eqnarray*}
 and 
 \begin{eqnarray*}
  \left< \reflectbox{\raisebox{-13pt}{ \includegraphics[height=.4in]{flatreid3a}}}\,\right> &=& \left< \reflectbox{\raisebox{-13pt}{ \includegraphics[height=.4in]{pfskein-8}}}\right> + q \left< \reflectbox{\raisebox{-13pt}{ \includegraphics[height=.4in]{pfskein-9}}}\right> \\
  &=& \left< \reflectbox{ \raisebox{-13pt}{ \includegraphics[height=.4in]{reid3a-mixed}}} \right> + q^{-1}  \left< \reflectbox{\raisebox{-13pt}{ \includegraphics[height=.4in]{pfskein-10}}}\right>  +q  \left< \reflectbox{\raisebox{-13pt}{ \includegraphics[height=.4in]{pfskein-11}}}\right> + \left< \reflectbox{ \raisebox{-13pt}{ \includegraphics[height=.4in]{pfskein-12}}}\right>.
 \end{eqnarray*}
After applying planar isotopies to some of the diagrams above, the skein relation given in Equation~\eqref{eq:skein-reid3a} follows. 

Finally, we verify the skein relation depicted in Equation~\eqref{eq:skein-reid3b}.
 \begin{eqnarray*}
  \left< \raisebox{-13pt}{ \includegraphics[height=.4in]{flatreid3b}}\,\right> &=& \left< \raisebox{-13pt}{ \includegraphics[height=.4in]{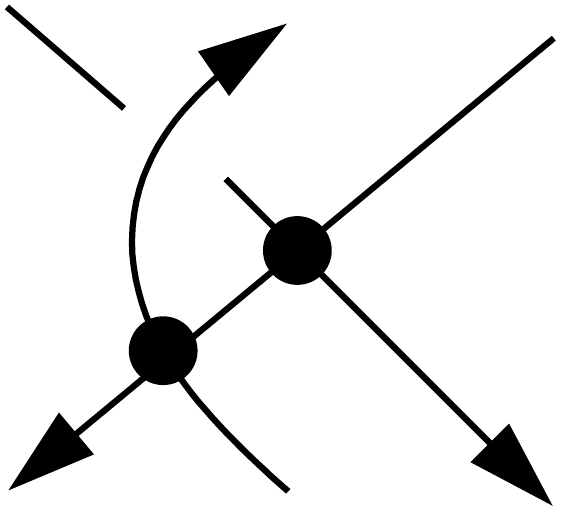}}\right> + q  \left< \raisebox{-13pt}{ \includegraphics[height=.4in]{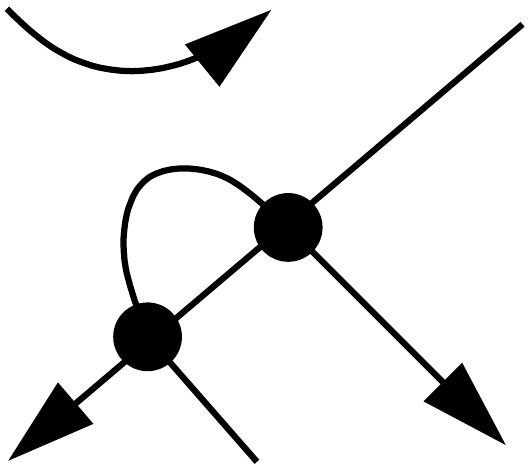}}\right> \\
 &=&  \left< \raisebox{-13pt}{ \includegraphics[height=.4in]{reid3b-mixed}}\right> + q^{-1} \left< \raisebox{-13pt}{ \includegraphics[height=.4in]{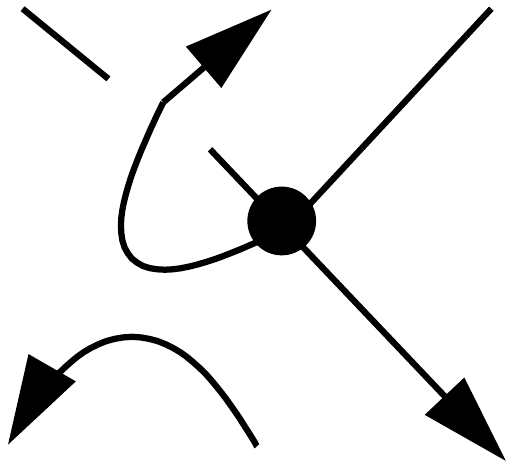}}\right> + q  \left< \raisebox{-13pt}{ \includegraphics[height=.4in]{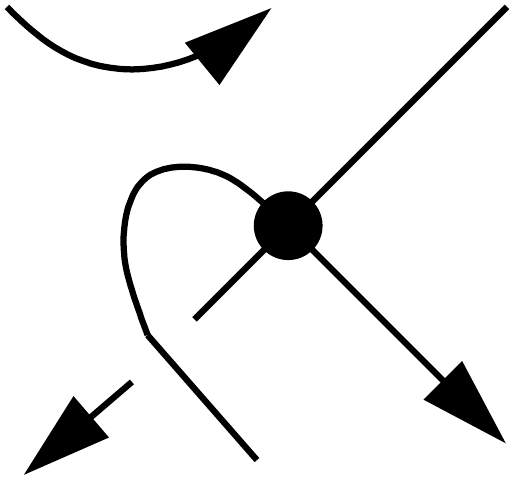}}\right> +   \left< \raisebox{-13pt}{ \includegraphics[height=.4in]{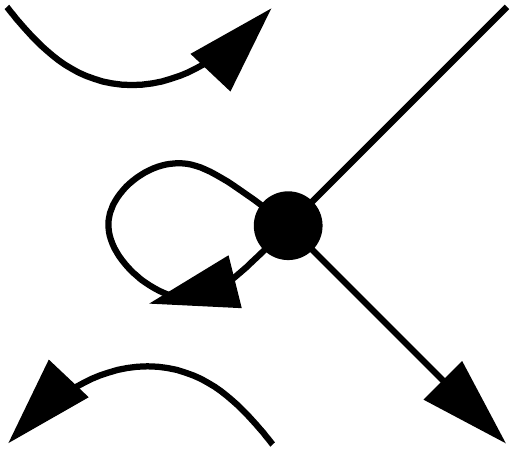}}\right>\\
  &=&  \left< \raisebox{-13pt}{ \includegraphics[height=.4in]{reid3b-mixed}}\right> + q^{-1} \left( \left< \raisebox{-13pt}{ \includegraphics[height=.4in]{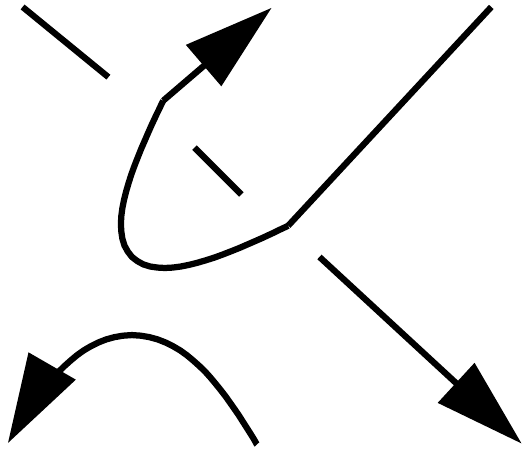}}\right> + q^{-1}  \left< \raisebox{-13pt}{ \includegraphics[height=.4in]{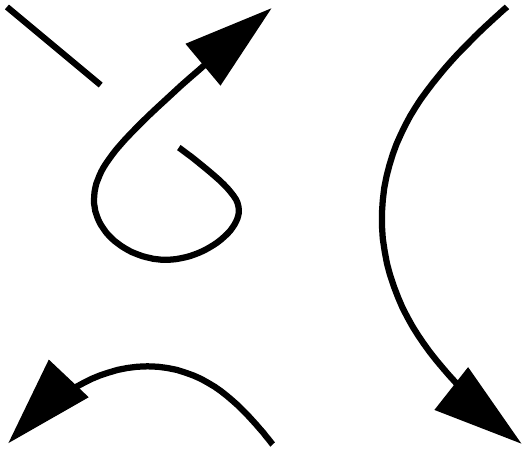}}\right> \right ) \\
  && + q  \left( \left< \raisebox{-13pt}{ \includegraphics[height=.4in]{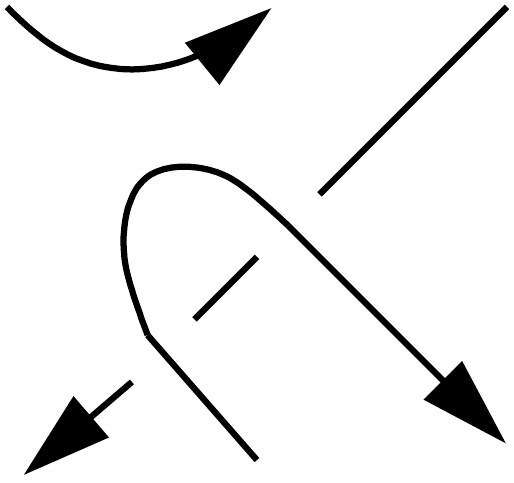}}\right>  + q  \left< \raisebox{-13pt}{ \includegraphics[height=.4in]{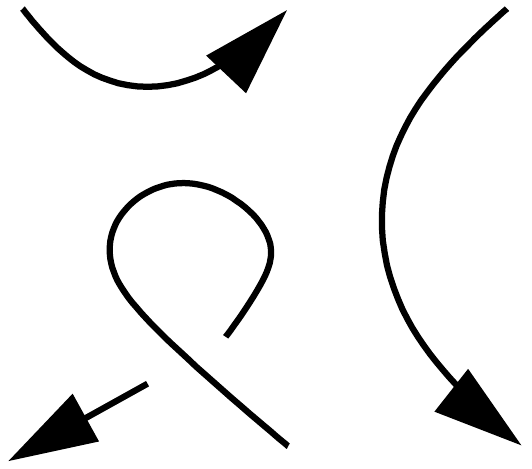}}\right>    \right) + [n+1] \left< \raisebox{-13pt}{ \includegraphics[height=.4in]{skein-last}}\,\right>  \\
   &=&  \left< \raisebox{-13pt}{ \includegraphics[height=.4in]{reid3b-mixed}}\right> + q^{-1} \left< \raisebox{-13pt}{ \includegraphics[height=.4in]{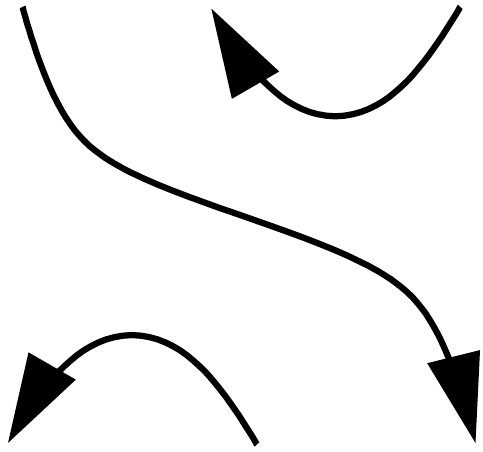}}\right> + q^{-2} q^{-n}\left< \raisebox{-13pt}{ \includegraphics[height=.4in]{skein-last}}\,\right> \\
   && + q \left< \reflectbox{\raisebox{-13pt}{ \includegraphics[height=.4in]{pfskein-22}}}\right> + q^2 q^n \left< \raisebox{-13pt}{ \includegraphics[height=.4in]{skein-last}}\,\right> + [n+1] \left< \raisebox{-13pt}{ \includegraphics[height=.4in]{skein-last}}\,\right>\\
   &=&  \left< \raisebox{-13pt}{ \includegraphics[height=.4in]{reid3b-mixed}}\right>  + q^{-1} \left< \raisebox{-13pt}{ \includegraphics[height=.4in]{pfskein-22}}\right> + q \left< \reflectbox{\raisebox{-13pt}{ \includegraphics[height=.4in]{pfskein-22}}}\right> + [n+3] \left< \raisebox{-13pt}{ \includegraphics[height=.4in]{skein-last}}\,\right>,
  \end{eqnarray*}
where we used that $q^{-n-2}+ q^{n+2} + [n+1] = [n+3]$. Similar computations reveal that
 \begin{eqnarray*}
   \left< \reflectbox{\raisebox{-13pt}{ \includegraphics[height=.4in]{flatreid3b}}}\,\right> &=& \left< \reflectbox{\raisebox{-13pt}{ \includegraphics[height=.4in]{reid3b-mixed}}}\right>  + q^{-1} \left< \raisebox{-13pt}{ \includegraphics[height=.4in]{pfskein-22}}\right> + q \left< \reflectbox{\raisebox{-13pt}{ \includegraphics[height=.4in]{pfskein-22}}}\right> + [n+3] \left< \reflectbox{\raisebox{-13pt}{ \includegraphics[height=.4in]{skein-last}}}\,\right>.
   \end{eqnarray*}
Employing the second identity in~\eqref{eq:reid3a-mixed}, we see that the desired skein relation in Equation~\eqref{eq:skein-reid3b} holds.
\end{proof}

\begin{remark}
The graph skein relations given in Proposition~\ref{prop:graph skein rel} are consistent and sufficient to assign in a unique way a Laurent polynomial in $\mathbb{Z}[q, q^{-1}]$ to any 4-valent planar graph with crossing-type oriented vertices. (Compare with the work in~\cite{KV}.)
\end{remark}

Given a link diagram $D$ (or singular link diagram $G$) we can write each classical crossing in $D$ (or in $G$) as follows:
\begin{eqnarray*}
\left<  \raisebox{-12pt}{ \includegraphics[height=.4in]{pcross}}\,\right>  &=& \left< \raisebox{-12pt}{ \includegraphics[height=.4in]{flatcross}}\,\right>  - q^{-1}  \left< \raisebox{-12pt}{  \includegraphics[height=.4in]{2arcs} }\, \right > \\
\left<  \raisebox{-12pt}{ \includegraphics[height=.4in]{ncross}}\,\right>  &=& \left< \raisebox{-12pt}{ \includegraphics[height=.4in]{flatcross}}\,\right>  - q  \left< \raisebox{-12pt}{  \includegraphics[height=.4in]{2arcs} }\, \right >.
\end{eqnarray*}
This process results in writing $\brak{D}$ (or $\brak{G}$) as a $\mathbb{Z}[q, q^{-1}]$-linear combination of evaluations of planar 4-valent graphs with crossing-type oriented vertices. Then, we evaluate the resulting planar graphs using the graphs skein relations in Proposition~\ref{prop:graph skein rel}, and recover the regular isotopy version of the $sl(n)$ polynomial (or our polynomial invariant for singular links constructed in Section~\ref{sec:inv-singlinks}).

Therefore, this approach provides another method for computing the $sl(n)$ polynomial for oriented knots and links and its extension to singular links.

\begin{remark}
The graphical calculus provided in Proposition~\ref{prop:graph skein rel} is a version of the MOY state model for the $sl(n)$ polynomial given in ~\cite{MOY}, where the wide edges labeled 2 are contracted to result in our 4-valent crossing-type oriented vertices.
\end{remark}

\section{Balanced oriented knotted graphs}\label{sec:knotted graphs}

We would like to see whether we can extend our polynomial invariant for singular links constructed in Section~\ref{sec:inv-singlinks} (and based on a solution of the YBE) to an invariant that includes oriented knotted graphs. Specifically, the following question arises: Can the polynomial $\brak{G} \in \Z[q, q^{-1}]$ be extended so that we obtain an invariant under all versions of the type 6 Reidemeister move shown below? 
\[ \raisebox{-13pt}{ \reflectbox{\includegraphics[height=0.42in]{r61}}}\hspace{0.5cm} \sim \hspace{0.5cm} \raisebox{-13pt}{ \includegraphics[height=.4in]{flatcross}}
 \hspace{0.5cm} \sim \hspace{0.5cm} \raisebox{-13pt}{ \includegraphics[height=0.42in]{r61}}\]
\[{\reflectbox{\raisebox{-13 pt}{\includegraphics[height=.45in,angle =90]{altr5} }}} \hspace{0.5cm} \sim \hspace{0.5cm}  \raisebox{-13 pt}{\includegraphics[height=.4in]{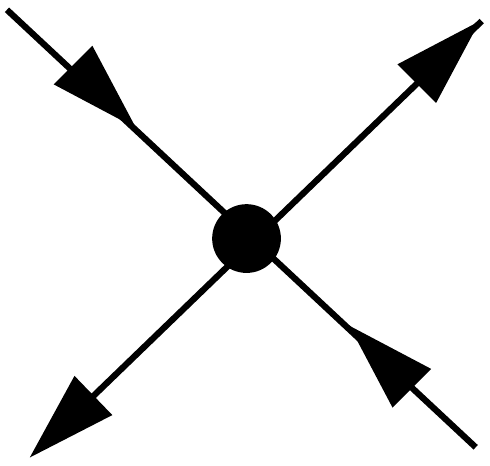} } \hspace{0.5cm} \sim \hspace{0.5cm}
{\reflectbox{\raisebox{-13 pt}{\includegraphics[height=.45in,angle =90]{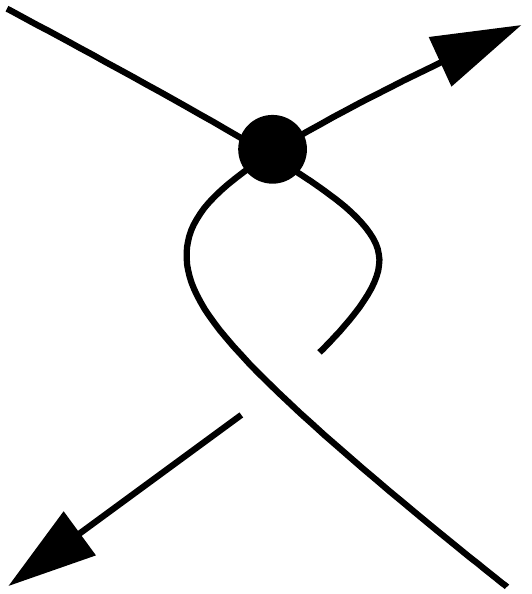}} }}\]
Therefore, we need to consider \textit{balanced oriented} knotted graphs containing not only crossing-type oriented vertices but also alternating oriented vertices:
\[ \raisebox{-13pt}{ \includegraphics[height=.4in]{flatcross}} \hspace{2cm} \raisebox{-14 pt}{\includegraphics[height=.4in]{flatalt} }\]
We will denote the extended polynomial by $[ \,\,\cdot \,\,]$, and we impose the skein relation
\[ \left[\, \raisebox{-13pt}{\includegraphics[height=.4in]{flatalt}}\, \right] =     \gamma \left [\raisebox{-13pt}{ \includegraphics[height=.4in]{2arcs-op2}}\, \right] + \gamma \left [\, \reflectbox{\raisebox{-13pt}{ \includegraphics[height=.4in, angle = 90]{2arcs-op2}}} \right]
 \]
 for some $\gamma \in \mathbb{Z}[q, q^{-1}]$. We also impose that $[ \,\,\cdot \,\,]$ satisfies the skein relations given in Figures~\ref{fig:crossings} and~\ref{fig:sing-crossing}. That is, if $G$ is a singular link diagram, then $[G] := \brak{G}$.
 
 \begin{theorem}
 The polynomial $[\,\, \cdot \,\,]$ is a regular isotopy invariant for balanced oriented knotted graphs with rigid vertices. 
  \end{theorem}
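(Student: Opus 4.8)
The plan is to reduce every verification to invariances that are already available. First I would observe that on any diagram built solely from classical crossings and crossing-type vertices the extended polynomial coincides with the singular-link polynomial, $[G]=\brak{G}$, and that $\brak{\,\cdot\,}$ was shown in Section~\ref{sec:inv-singlinks} to be a regular isotopy invariant of singular links, hence invariant under the moves $R2, R3, R4, R5$ in which no alternating vertex occurs. It therefore suffices to check invariance of $[\,\cdot\,]$ under those regular isotopy moves of a balanced oriented knotted graph in which at least one \emph{alternating} vertex participates: the $R4$ move sliding a strand across an alternating vertex, and the $R5$ move sliding a classical crossing rigidly past an alternating vertex.

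The single tool I would use throughout is the defining $\gamma$-skein relation, which rewrites an alternating vertex as the $\gamma$-weighted sum of its two oriented planar smoothings. Since each smoothing is free of crossings and vertices inside the disk where the move takes place, applying this relation to the two sides of a move expresses both as $\gamma$-linear combinations of singular-link diagrams. The resolution is local and identical on the two sides, so by linearity of $[\,\cdot\,]$ it is enough to match the two sides smoothing by smoothing, and for each resulting term only the already-established invariances of $\brak{\,\cdot\,}$ are needed. This is exactly the mechanism used in the proof of the first theorem of Section~\ref{sec:inv-singlinks} and in Proposition~\ref{prop:poly-r6}, with the pair $(\alpha,\beta)$ replaced by $(\gamma,\gamma)$.

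Carrying this out, the $R4$ move with an alternating vertex becomes, after resolution, the problem of sliding a strand past two disjoint smoothing arcs; this is a composition of the classical moves $R2$ and $R3$, under which $\brak{\,\cdot\,}$ is invariant, so each of the two terms is preserved and hence so is their $\gamma$-sum. For the $R5$ move I would mimic the $R5$ computation in the proof of the first theorem of Section~\ref{sec:inv-singlinks}: resolve the alternating vertex, use invariance under $R2$ (and, where a third strand is present, $R3$) to transport the classical crossing from one side of the resolved vertex to the other, and reassemble. The equality of the two coefficients $\gamma$, which reflects the $90^\circ$-rotational symmetry of the alternating vertex, is what makes the transported terms recombine into the skein relation for the rotated alternating vertex.

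The main obstacle is the $R5$ case. The two smoothings of an alternating vertex behave differently when a crossing is pushed across them: on the through-smoothing the crossing simply passes by an $R2$ move, but on the turn-back smoothing, where the two strands are oppositely oriented, the crossing can close up into a curl. One must therefore check that any first-Reidemeister factors $q^{\pm n}$ produced by such curls occur with equal total weight on the two sides of the move and so cancel; here the positive and negative kink relations of Section~\ref{sec:YB-sl(n)poly}, which contribute the factors $q^{n}$ and $q^{-n}$, together with the crossing-exchange identity of the same section, are the inputs, and the symmetric $(\gamma,\gamma)$ weighting is precisely what forces the cancellation for every $\gamma$. Everything outside this point is bookkeeping inherited from Section~\ref{sec:inv-singlinks}.
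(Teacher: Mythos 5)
Your proposal is correct and follows essentially the same route as the paper: reduce to the $R4$ and $R5$ moves involving an alternating vertex, resolve that vertex by the $\gamma$-skein relation into its two planar smoothings, and match the two sides of each move term by term, using $R2$ for the $R4$ move and the kink relations (whose factors $q^{n}$ and $q^{-n}$ cancel) for the $R5$ move, exactly as in the paper's computation. One small correction that does not affect validity: the $R5$ cancellation is forced not by the symmetric $(\gamma,\gamma)$ weighting but by the fact that the curl is produced by the \emph{same} smoothing on both sides of the move and carries the same sign (the kink factor depends only on the crossing sign), while the pass-through terms are planar isotopic; the equal coefficients are simply carried along.
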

  \begin{proof}
 Because $[\,\, \cdot \,\,]$ satisfies the skein relations given in Figures~\ref{fig:crossings} and~\ref{fig:sing-crossing}, it is invariant under the moves $R2$ and $R3$, as well as under the moves $R4$ and $R5$ for crossing-type oriented vertices. It remains to show that $[\,\, \cdot \,\,]$ is invariant under the moves $R4$ and $R5$ for alternating oriented vertices. We look first at the move $R4$:  
 \begin{eqnarray*}
 \left[\,
       \raisebox{-26pt}{{ \includegraphics[height=.8in]{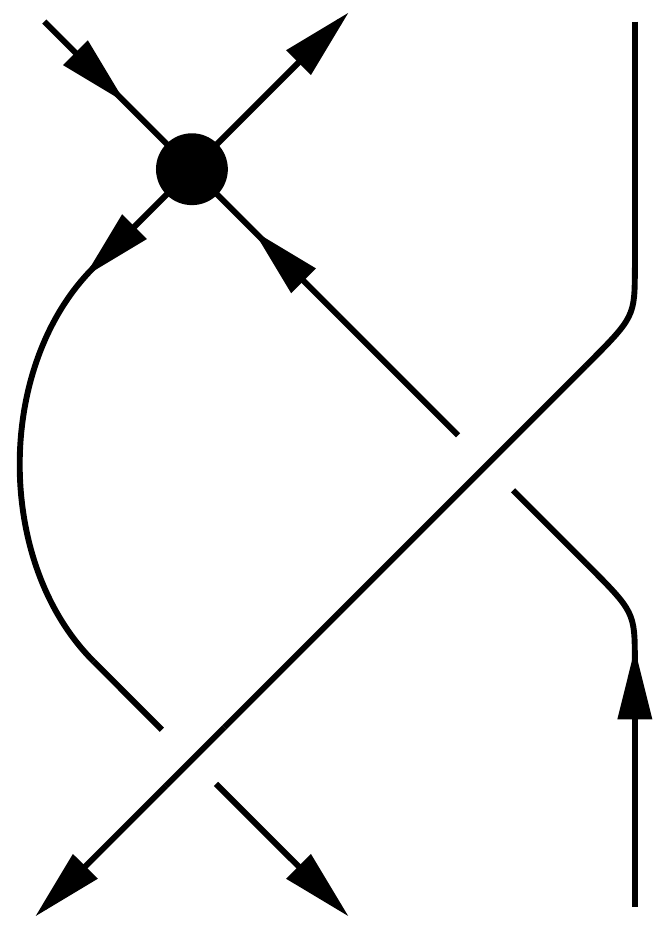}}}
 \right]
 &=& \gamma
 \left[
      \raisebox{-26pt}{ \includegraphics[height=.8in]{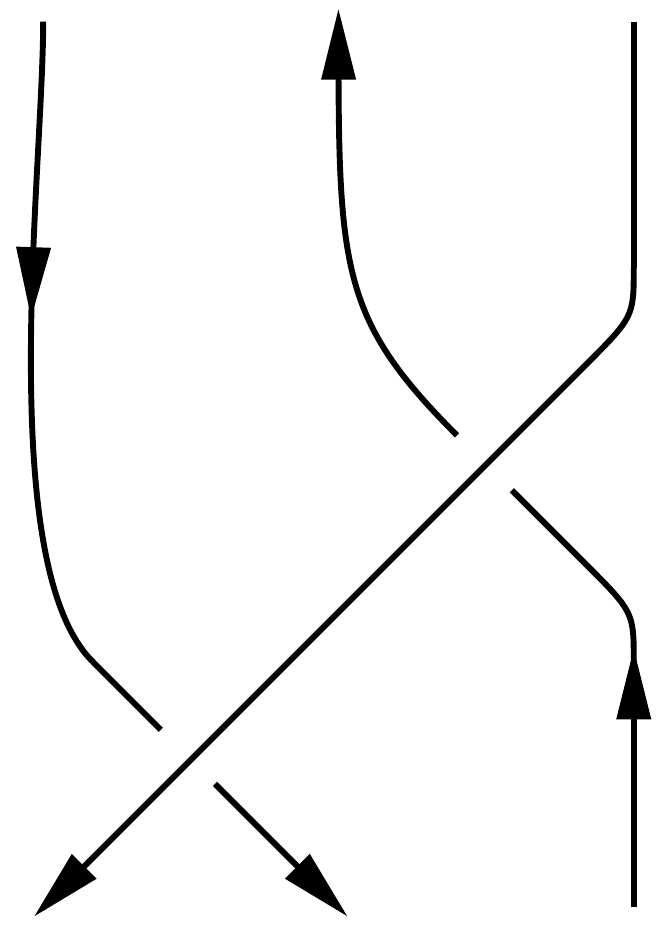}}
 \right]
 +  \gamma
  \left[
      \raisebox{-26pt}{ \includegraphics[height=.8in]{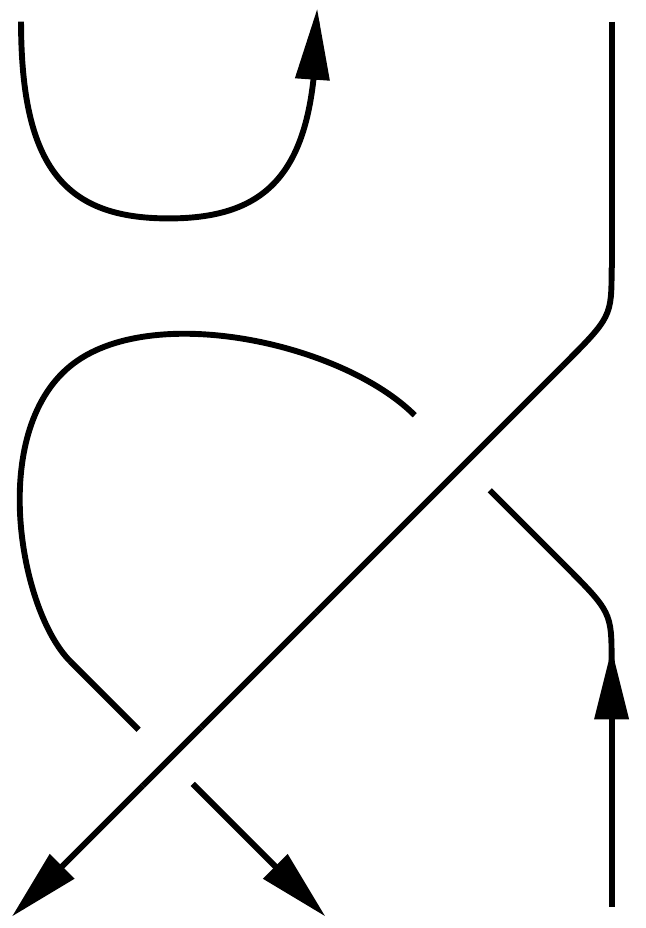}}
 \right] 
 \\
 &\stackrel{R2}{=}& \gamma
 \left[
      \raisebox{30pt}{ \includegraphics[height=.8in,angle=180]{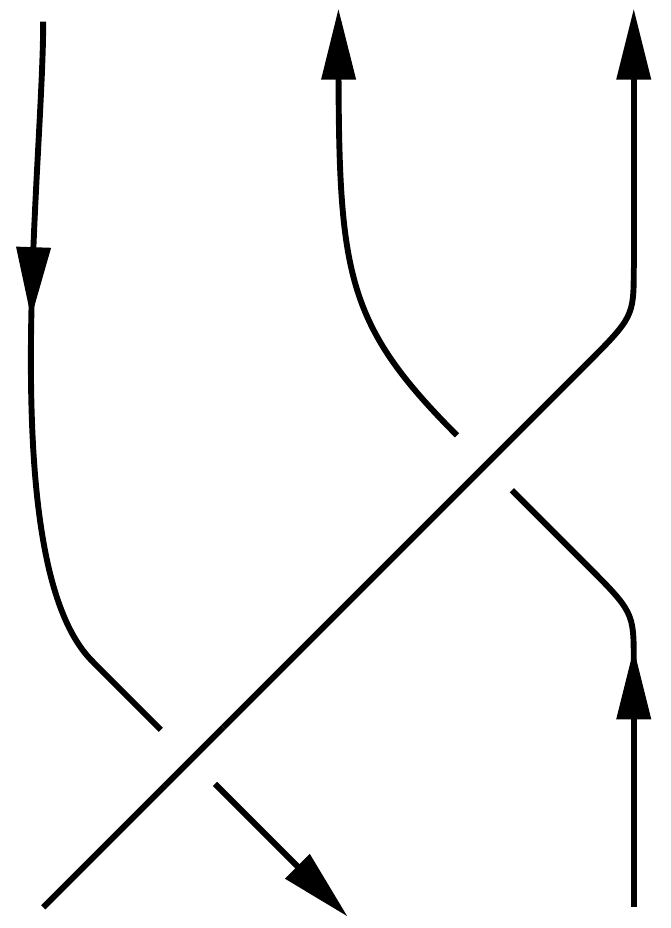}}
 \right]
 +  \gamma
  \left[
      {\raisebox{30pt}{ \includegraphics[height=.8in,angle=180]{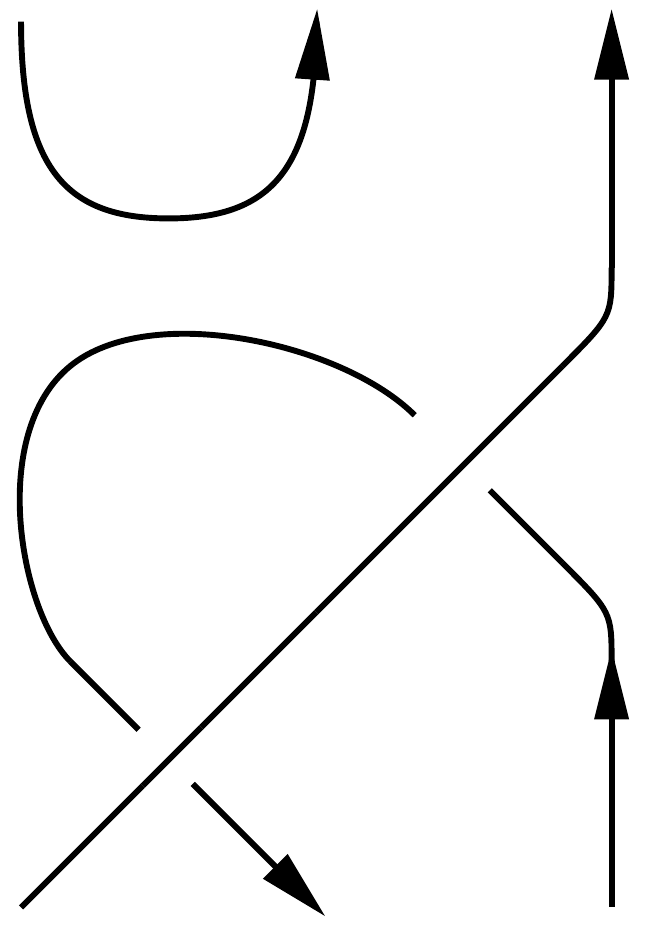}}}
 \right]
 = \left[
      \raisebox{30pt}{ \includegraphics[height=.8in,angle=180]{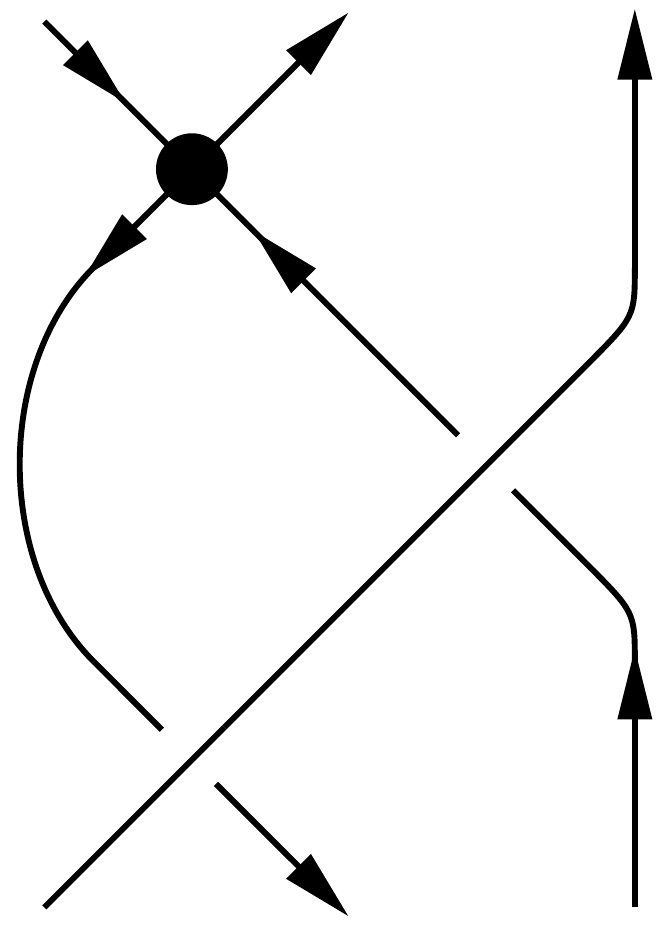}}\,
 \right].
\end{eqnarray*}

We show now the invariance of $[\,\, \cdot \,\,]$ under the move R5 for alternating oriented vertices:

\begin{eqnarray*}
 \left[\,
       \raisebox{-11pt}{{ \includegraphics[height=.35in]{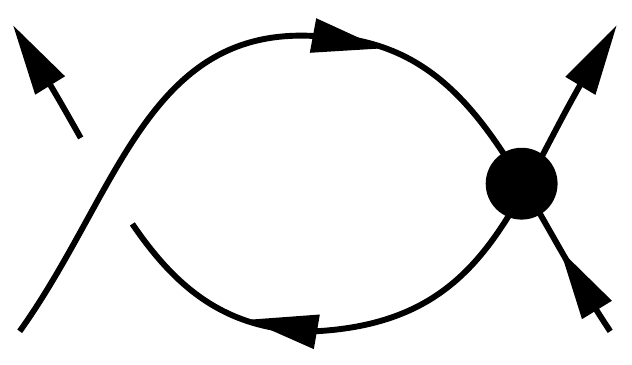}}}
 \right]
 &=& \gamma
 \left[
      \raisebox{-11pt}{ \includegraphics[height=.35in]{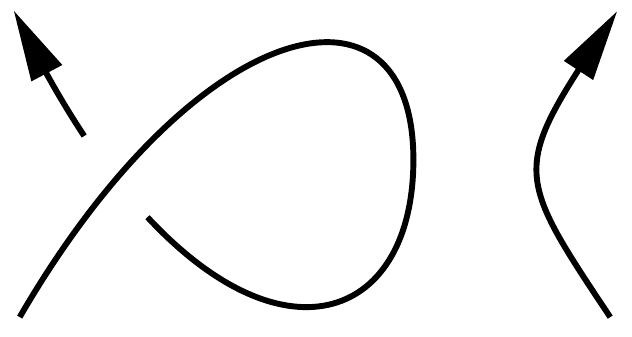}}
 \right]
 +  \gamma
  \left[
      \raisebox{16pt}{ \includegraphics[height=.4in,angle=180]{pcross}}
 \right] 
 \\
 &=& \gamma q^{n}
 \left[
      \raisebox{16pt}{ \includegraphics[height=.4in,angle=180]{2arcs}}
 \right]
 +  \gamma
  \left[
      {\raisebox{16pt}{ \includegraphics[height=.4in,angle=180]{pcross}}}
 \right]
\\
 &=& \gamma q^{n} q^{-n}
 \left[
      \reflectbox{\raisebox{-11pt}{ \includegraphics[height=.35in]{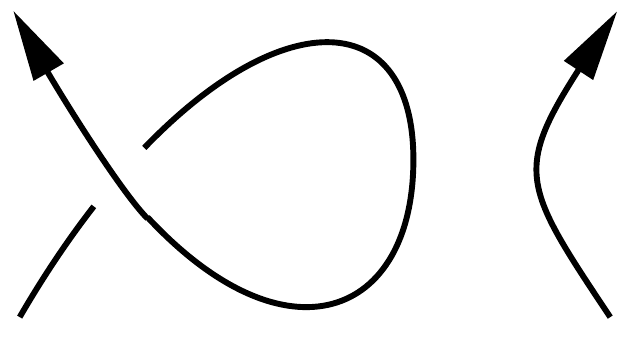}}}
 \right]
 +  \gamma
  \left[
      {\raisebox{16pt}{ \includegraphics[height=.4in,angle=180]{pcross}}}
 \right]\\
&=& \left[
     \reflectbox{ \raisebox{-11pt}{ \includegraphics[height=.35in]{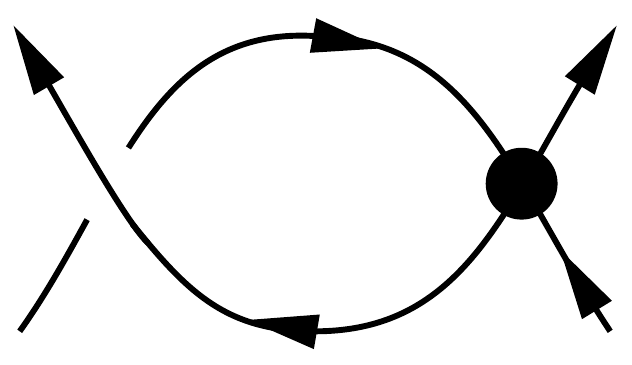}}}\,
 \right].
\end{eqnarray*}
This completes the proof.
  \end{proof}
 
The previous theorem says that $[\,\, \cdot \,\,]$ is invariant under the moves $R2, R3, R4$ and $R5$, but not yet under the move $R6$. Can we do better than this? Can we obtain an invariant for balanced oriented \textit{topological} knotted graphs?

By Proposition~\ref{prop:poly-r6}, we have
\[ \left[ \raisebox{-13pt}{ \reflectbox{\includegraphics[height=0.4in]{r61}}}\, \right]
               = q \,\left[ \raisebox{-13pt}{ \includegraphics[height=.4in]{flatcross}}\, \right]
               \hspace{1cm}
\left[  \raisebox{-13pt}{ \includegraphics[height=0.4in]{r61}}\, \right]
               = q^{-1}\left[ \raisebox{-13pt}{ \includegraphics[height=.4in]{flatcross}} \, \right].\]
Therefore, we would like the following to hold, as well:

\begin{eqnarray}\label{eq:alternating}
\left[ \reflectbox{\raisebox{-12 pt}{\includegraphics[height=0.45in, angle=90]{altr5} }}\,\right]=  q\left[\,\raisebox{-13 pt}{\includegraphics[height=.4in]{flatalt} }\,\right] \hspace{1cm} \left[\, \reflectbox{ \raisebox{-12 pt}{\includegraphics[height=0.45in, angle=90]{altr52} }}\,\right]=  q^{-1}\left[\,\raisebox{-13 pt}{\includegraphics[height=.4in]{flatalt} }\,\right].
\end{eqnarray}

Once the identities in Equation~\eqref{eq:alternating} are satisfied, we can do the following: Given $G$ a balanced oriented knotted graph diagram, let $\epsilon(G)$ be the \textit{writhe} of $G$  given by the summation of the signs of all crossings in $G$, where
\[ 
\epsilon \left(  \raisebox{-12pt}{ \includegraphics[height=.4in]{pcross}}  \right)=1, \hskip 20 pt
\epsilon \left(  \raisebox{-12pt}{ \includegraphics[height=.4in]{ncross}}  \right)=-1,\hskip 20 pt
\]
and let \[P(G)= q^{-\epsilon(G)} [G].\]
We see that if the identities in Equation~\eqref{eq:alternating} hold, then $P(G)$ is a regular isotopy invariant for balanced oriented topological knotted graphs.

Using the skein relations in Proposition~\ref{prop:useful}, we have
\begin{eqnarray*}
 q^{-1} \left[\,  \reflectbox{\raisebox{-13pt}{  \includegraphics[width=.4in,angle=90]{altr5}}} \right] 
 &=& q^{-1}\left[\, \reflectbox{\raisebox{15 pt}{\includegraphics[height=.4in, angle = 270]{reid2b}}}\,\right] + q^{-1} \cdot q \left[\, \raisebox{-11 pt}{\includegraphics[height=.35in]{poskink} } \raisebox{15 pt}{\includegraphics[height=.35in, angle = 180]{arc}}\,\right]
 \\&=& q^{-1} \left[\, \raisebox{-13 pt}{\includegraphics[height=.4in, angle = 90]{2arcs-op}}\,\right] + q^n \left[\, \raisebox{-13 pt}{\includegraphics[height=.4in]{2arcs-op2}}\,\right].
\end{eqnarray*}
Similarly, 
\begin{eqnarray*}
q \left[\, \reflectbox{ \raisebox{-12 pt}{\includegraphics[height=0.45in, angle=90]{altr52} }}\,\right] 
&=& q \left[\, \raisebox{-13 pt}{\includegraphics[height=.4in, angle = 90]{2arcs-op}}\,\right] + q^{-n} \left[\, \raisebox{-13 pt}{\includegraphics[height=.4in]{2arcs-op2}}\,\right].
\end{eqnarray*}

Imposing the equalities in~\eqref{eq:alternating}, we see that we need
\[q^n= q^{-n} = \gamma \,\, \text{and} \,\, q = q^{-1} = \gamma, \]
or equivalently, $q = \pm1$. We obtain that $P(G)_{|q = \pm1}$ is an ambient isotopy \textit{numerical invariant} for balanced oriented topological knotted graphs. 

However, if $q = \pm1$, the skein relation defining the regular isotopy version of the $sl(n)$ link polynomial and its extension to knotted graphs implies that 
\[\left[ \raisebox{-12pt}{ \includegraphics[height=.4in]{pcross}}\,\right] =  \left[\raisebox{-12pt}{ \includegraphics[height=.4in]{ncross}}\,\right]\]
and, therefore, this numerical invariant does not distinguish between different embeddings of a graph, which is rather disappointing.\\

\textit{Concluding remarks.} In this paper, we employed a solution of the Yang-Baxter equation to construct, for each integer $n \geq 2$, a polynomial invariant $\brak{\, \cdot \, }$ of regular isotopy for singular links. Then we studied some properties of the resulting polynomials. These polynomials can also be defined via the representations $\rho_n$ introduced in Section~\ref{sec:repres}. For each fixed integer $n \geq 2$, we extended further the polynomial $\brak{\, \cdot \,}$ to allow not only crossing-type oriented vertices but also alternating oriented vertices. We showed that the resulting Laurent polynomial $[\, \cdot \,]$ is an invariant of rigid-vertex regular isotopy for balanced oriented knotted graphs. In addition, in Section~\ref{sec:MOYrelations} we showed an interesting connection between our polynomial $\brak{\, \cdot \,}$ for singular links and the MOY state model for the $sl(n)$ polynomial for classical knots and links.
 
\textbf{Acknowledgements.} This research was partially completed during the 2013 Fresno State Mathematics REU Program, supported by  NSF grant \#DMS-1156273. The authors would also like to thank the referee for her/his careful reading of the paper and valuable comments and suggestions.


\end{document}